\newtheorem{theorem}{Theorem}[section]
\newtheorem{lemma}{Lemma}[section]
\newtheorem*{lemmaMaal}{Lemma (commutator estimates)}
\newtheorem{remark}{Remark}[section]
\newcommand{\HN}{{\mathbb{H}^N}}
\newcommand{\Sph}{S^{2N+1}}
\def \C {\mathbb{C}}
\def \R {\mathbb{R}}
\def \H {\mathbb{H}}
\def \LSph {\mathcal{A}_{2k}}
\def \LH {\mathcal{L}_{2k}}
\newcommand{\LpNorm}[2]{\left\| #1 \right\|_{L^{#2}}}
\newcommand{\SobNorm}[2]{\left\| #1 \right\|_{H^{#2}}}
\newcommand{\LpNormB}[3]{\left\| #1 \right\|_{L^{#2}\left(#3\right)}}
\newcommand{\SobNormB}[3]{\left\| #1 \right\|_{H^{#2}\left(#3\right)}}
\def \Cay {\mathcal{C}}
\def \Jacr {\Lambda_{\rho_n}}
\def \Jacs {\Lambda_{\sigma_n}}
\def \Jacc {\Lambda_{\Cay}}
\def \mlocSobH {H^{-k}_{\textnormal{loc}}\left(\HN\right)}
\def \locSobH {H^{k}_{\textnormal{loc}}\left(\HN\right)}
\def \SobSph {H^k\left(\Sph\right)}
\def \mSobSph {H^{-k}\left(\Sph\right)}
\def \dSph {\;\textnormal{d}v_{S}}
\def \dH {\;\textnormal{d}v_{H}}
\def \set {\mathcal{B}}
\def \e {\epsilon}
\def \N {\mathbb{N}}
\def \U {\mathbb{U}(N+1)}
\def \la {\langle}
\def \ra {\rangle}
\newcommand{\ol}[1]{\overline{#1}}
\begin{document}

\title{Palais-Smale sequences for the fractional CR Yamabe functional and multiplicity results}

\author{Chiara Guidi$^{(1)}$ \& Ali Maalaoui$^{(2)}$ \& Vittorio Martino$^{(3)}$}
\addtocounter{footnote}{1}
\footnotetext{Dipartimento di Matematica, Universit\`a di Bologna, piazza di Porta S.Donato 5, 40126 Bologna, Italy. E-mail address:
{\tt{chiara.guidi12@unibo.it}}}
\addtocounter{footnote}{1}
\footnotetext{Department of mathematics and natural sciences, American University of Ras Al Khaimah, PO Box 10021, Ras Al Khaimah, UAE. E-mail address:
{\tt{ali.maalaoui@aurak.ae}}}
\addtocounter{footnote}{1}
\footnotetext{Dipartimento di Matematica, Universit\`a di Bologna, piazza di Porta S.Donato 5, 40126 Bologna, Italy. E-mail address:
{\tt{vittorio.martino3@unibo.it}}}

\date{}
\maketitle

\vspace{5mm}

{\noindent\bf Abstract} {\small In this paper we consider the functional whose critical points are solutions of the fractional CR Yamabe type equation on the sphere. We firstly study the behaviour of the Palais-Smale sequences characterizing the bubbling phenomena and therefore we prove a multiplicity type result by showing the existence of infinitely many solutions to the related equation.}

\vspace{5mm}

\noindent
{\small Keywords: fractional sub-elliptic operators, critical exponent. }

\vspace{4mm}

\noindent
{\small 2010 MSC. Primary: 35J20, 35R11.  Secondary: 53A30, 35B33.}

\vspace{4mm}


\section{Introduction  and statement of the results}

\noindent
Let $N\geq 1$ and let $\Sph$ denote the (2N+1)-dimensional sphere, equipped with its standard CR structure. In this paper we consider the following energy functional
\begin{equation}\label{eq: functionalE}
E(u)=\frac{1}{2}\int_{\Sph}u\LSph u\dSph-\frac{1}{p^*}\int_{\Sph}|u|^{p^*}\dSph,\quad u\in H^k\left(\Sph\right)
\end{equation}
whose critical points satisfy the fractional CR Yamabe type equation
\begin{equation}\label{eq: problem on Sph}
\LSph u=|u|^{p^*-2}u \quad \text{on }\Sph,\quad u\in \SobSph.
\end{equation}
Here $k\in \R$ is a parameter such that $0<2k<Q:=2N+2$, $\LSph$ is the sub-elliptic intertwining operator of order $2k$ and $H^k(\Sph)$ is the related fractional Sobolev space, as defined for instance in \cite{FS,FGMT} (we will give all the rigorous definitions in Section 2); also, the exponent $p^*$ is the critical one for the embedding $H^k(\Sph)\hookrightarrow L^{p^*}(\Sph)$.\\
Just to fix the ideas, for instance when $k=1$, the operator $\mathcal{A}_2$ is nothing but the standard conformal sub-Laplacian on the sphere. Let us also notice that a similar functional $E_{\H}$ can be defined equivalently on the Heisenberg group $\HN$, via the Cayley transform $\mathcal{C}$, and the related equation on $\HN$ is given by
\begin{equation}\label{eq: problem on H}
\LH U=|U|^{p^*-2}U \quad \text{ on }\HN,\quad U\in D^k(\HN).
\end{equation}
We refer the reader to the next section for the definition of $D^k(\HN)$ and the relation between $\LSph$ and $\LH$.\\
These kind of conformally invariant operators were introduced in \cite{GG} and they can be seen as the CR counterpart to the GJMS operators defined in the Riemannian setting in \cite{GJMS}. Indeed, as in the Euclidean case, the existence of an infinite  family of explicit positive solutions (bubbles) to the previous equations is known, moreover due to the lack of compactness of the Sobolev embedding (which can be seen geometrically as the action of the conformal group), the functional $E$ does not satisfy the Palais-Smale condition.\\
However, a sharp Sobolev inequality has been proved by Frank and Lieb in \cite{FL}, showing that the extremals are exactly the bubbles. All these facts suggest that a characterization of the Palais-Smale sequences should be possible, making the bubbling phenomena completely explicit, as in the classic case.\\
Indeed, this is what we will prove in our first result. The proof is quite involved and delicate if compared to the standard case: this is due basically to the non-Euclidean setting, the degeneracy of the given operators and also the fractional nature of the problem, making it non-local. As it is commonly known in the standard setting, the bubbling phenomena occurs at a local scale which makes it harder to deal with in a non-local setting. In fact, even if a natural behavior is expected, any variational problem needs a careful analysis depending on the ambient manifold and the structure of the operators involved (see for instance \cite{I,MMde}).
In our particular case, in addition to the results in \cite{FL}, we will make use of some point-wise commutator estimates, which has been recently written specifically for these type of operators (see \cite{M}): as is the case of local operators, these kind of estimates are useful in order to study regularity properties, after localizing with cut-off functions (see for instance \cite{C, S, LS}). Therefore, we will prove the following
\begin{theorem}\label{thm: classification of PS sequences}
Let $u_n$ be a Palais-Smale sequence for the functional $E$ at level $c$. Then there exist $u_{\infty}$ a solution of \eqref{eq: problem on Sph}, $m$ sequences of points $\zeta_{n}^1,\dots,\zeta_{n}^m\in\Sph$ such that $\lim_{n\to\infty} \zeta_n^l=\zeta^l\in\Sph$ for $l=1,\dots, m$ and $m$ sequences of real numbers $R_{n}^1,\dots, R_{n}^m$ converging to zero, such that:
\begin{itemize}
\item[i)]$u_n=u_\infty +\sum_{l=1}^m v_n^l+o(1)$ in $\SobSph$
\item[ii)] $E(u_n)=E(u_{\infty})+\sum_{l=1}^m E_{\HN}(U_{\infty}^l)+o(1)$
\end{itemize}
where
\begin{equation*}
v_n^l=(\Lambda_{\sigma_n^l})^{\frac{Q-2k}{2Q}}\beta^lU_{\infty}^l\circ\sigma_{n}^l
\end{equation*}
with $\sigma_{n}^l=(\rho_{n}^l)^{-1}$, $\rho_{n}^l=\Cay\circ\delta_{R_n^l}\circ\tau_{\xi_n^l}$ and $\Lambda_{\sigma_n^l}$ denote half the absolute value of the Jacobian determinant of $\sigma_n^l$; $\beta^l$ are smooth compactly supported functions, such that $\beta^l\equiv 1$ on $B_{\frac{1}{4}}(\zeta^l)$, $\textnormal{supp}(\beta^l)\subset B_1(\zeta^l)$ and $U_{\infty}^l$ are solutions of \eqref{eq: problem on H}.
\end{theorem}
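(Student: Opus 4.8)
The plan is to run a Struwe-type profile decomposition adapted to the CR conformal group, the novelty being that the non-locality of $\LSph$ (and of $\LH$) forces one to control the operator through the point-wise commutator estimates of \cite{M} at every localization step. I would begin with the boundedness of the Palais--Smale sequence: from $E(u_n)\to c$ and $E'(u_n)\to 0$, the combination $E(u_n)-\tfrac1{p^*}\langle E'(u_n),u_n\rangle=\bigl(\tfrac12-\tfrac1{p^*}\bigr)\int_{\Sph}u_n\LSph u_n\dSph+o(1)(1+\|u_n\|_{\SobSph})$, together with $\tfrac12-\tfrac1{p^*}>0$ and the positivity of $\LSph$, shows $u_n$ is bounded in $\SobSph$. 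Along a subsequence $u_n\rightharpoonup u_\infty$ in $\SobSph$; testing $E'(u_n)\to0$ against smooth functions and using a.e. convergence together with the boundedness of $|u_n|^{p^*-2}u_n$ in the dual Lebesgue space shows $u_\infty$ solves \eqref{eq: problem on Sph}. Setting $w_n^1=u_n-u_\infty$, a Brezis--Lieb splitting of $\int_{\Sph}|u_n|^{p^*}\dSph$ and the decomposition $\int_{\Sph}u_n\LSph u_n\dSph=\int_{\Sph}u_\infty\LSph u_\infty\dSph+\int_{\Sph}w_n^1\LSph w_n^1\dSph+o(1)$ (weak convergence in the form domain) give $E(u_n)=E(u_\infty)+E(w_n^1)+o(1)$, while the companion Brezis--Lieb estimate for the nonlinear term in $H^{-k}(\Sph)$ gives $E'(w_n^1)\to0$; thus $w_n^1$ is again Palais--Smale, now with $w_n^1\rightharpoonup0$. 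If $w_n^1\to0$ strongly we conclude with $m=0$.

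If instead $w_n^1\not\to0$, then $E'(w_n^1)\to0$ yields $\int_{\Sph}w_n^1\LSph w_n^1\dSph=\int_{\Sph}|w_n^1|^{p^*}\dSph+o(1)$ with common limit $\ell>0$, and the sharp Sobolev inequality of \cite{FL} forces $\ell\ge S^{Q/2k}$, $S$ being the best constant. I would then fix a small $\delta>0$ — smaller than the minimal mass $S^{Q/2k}$ a bubble can carry and small enough for the standard vanishing lemma — and use the Lévy concentration function $\zeta\mapsto\int_{B_r(\zeta)}|w_n^1|^{p^*}\dSph$ to pick, for each large $n$, a radius $R_n^1$ and a point $\zeta_n^1$ (up to a subsequence $\zeta_n^1\to\zeta^1$) with $\sup_\zeta\int_{B_{R_n^1}(\zeta)}|w_n^1|^{p^*}\dSph=\int_{B_{R_n^1}(\zeta_n^1)}|w_n^1|^{p^*}\dSph=\delta$. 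A covering argument plus the Sobolev inequality shows that if $R_n^1$ did not tend to $0$ then $\|w_n^1\|_{L^{p^*}}$ would be small, contradicting $\ell\ge S^{Q/2k}$; hence $R_n^1\to0$.

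Next comes the rescaling. Using the conformal covariance of the intertwining operator recalled in Section~2 — conjugation by the Cayley transform together with the Jacobian weight $\Lambda^{(Q-2k)/(2Q)}$ turns $\LSph$ into $\LH$ and preserves both the $\SobSph$ (resp. $D^k(\HN)$) form and the $L^{p^*}$ norm — I set $\rho_n^1=\Cay\circ\delta_{R_n^1}\circ\tau_{\xi_n^1}$, with $\xi_n^1\in\HN$ chosen so that $\rho_n^1$ carries a fixed neighbourhood of $0$ onto a neighbourhood of $\zeta^1$ and $(\rho_n^1)^{-1}\bigl(B_{R_n^1}(\zeta_n^1)\bigr)$ is comparable to $B_1(0)$, and consider $\tilde w_n^1=(\Lambda_{\rho_n^1})^{(Q-2k)/(2Q)}\,w_n^1\circ\rho_n^1$. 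These are bounded in $D^k(\HN)$ with $\int_{\HN}|\tilde w_n^1|^{p^*}\dH=\int_{\Sph}|w_n^1|^{p^*}\dSph$, so $\tilde w_n^1\rightharpoonup U_\infty^1$ in $D^k(\HN)$. The delicate point is to pass to the limit in the equation and to show $U_\infty^1\ne0$: since $\LH$ is non-local one cannot localize it directly, and this is exactly where the point-wise commutator estimates of \cite{M} are used — they allow controlling $[\LH,\eta]$ for cut-offs $\eta$, running a regularity bootstrap for $\tilde w_n^1$ on fixed balls, and invoking the second concentration-compactness principle to get $\tilde w_n^1\to U_\infty^1$ in $L^{p^*}_{\mathrm{loc}}(\HN)$ away from atoms of $\lim|\tilde w_n^1|^{p^*}$; the choice $\delta<S^{Q/2k}$ together with $\sup_\xi\int_{B_1(\xi)}|\tilde w_n^1|^{p^*}\dH\le\delta$ rules out any atom in a fixed ball while $\int_{B_1(0)}|\tilde w_n^1|^{p^*}\dH$ stays comparable to $\delta$, so $U_\infty^1$ carries positive mass there, hence $U_\infty^1\ne0$ and solves \eqref{eq: problem on H}.

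Finally, I would iterate. With $\sigma_n^1=(\rho_n^1)^{-1}$ and $\beta^1$ as in the statement, set $v_n^1=(\Lambda_{\sigma_n^1})^{(Q-2k)/(2Q)}\beta^1 U_\infty^1\circ\sigma_n^1$ and $w_n^2=w_n^1-v_n^1$. Using the commutator estimates once more one shows $\beta^1U_\infty^1$ is close to $U_\infty^1$ in $D^k(\HN)$ (the cut-off is harmless because CR bubbles decay), hence the almost-orthogonality $\|w_n^1\|_{\SobSph}^2=\|v_n^1\|_{\SobSph}^2+\|w_n^2\|_{\SobSph}^2+o(1)$, the energy splitting $E(w_n^1)=E_{\HN}(U_\infty^1)+E(w_n^2)+o(1)$ (Brezis--Lieb for the nonlinear term, orthogonality and conformal invariance for the quadratic one), and that $w_n^2$ is again Palais--Smale with $w_n^2\rightharpoonup0$; if a later concentration point coincides with an earlier one, a scale-separation argument shows the corresponding ratio of radii tends to $0$ or $\infty$. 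Repeating, each extracted profile contributes an amount $E_{\HN}(U_\infty^l)\ge c_0>0$ (the ground-state level of \eqref{eq: problem on H}), these contributions add up and are bounded above by $c-E(u_\infty)+o(1)$, so after at most $m\le(c-E(u_\infty))/c_0$ steps the remainder converges strongly to $0$, which gives (i) and (ii). The main obstacle, as the introduction anticipates, is precisely the clash between the local character of the bubbling and the non-locality of $\LSph$: making every cut-off/localization argument rigorous — in the bubble extraction, in the regularity bootstrap, and in the orthogonality and energy-splitting identities — is where the point-wise commutator estimates are indispensable and constitutes the technical heart of the proof.
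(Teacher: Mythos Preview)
Your proposal is correct and follows essentially the same Struwe-type profile decomposition as the paper: boundedness, weak limit $u_\infty$, Brezis--Lieb splitting for the residual, L\'evy concentration function to locate $(\zeta_n^l,R_n^l)$, conformal rescaling via $\rho_n^l$, commutator estimates from \cite{M} to control $[\LH,\eta]$ and obtain local strong convergence $U_n\to U_\infty^l$, and finally iteration terminated by the energy quantization $E_{\HN}(U_\infty^l)\ge \tfrac{k}{Q}C_S^{-Q/2k}$. One minor imprecision: the cut-off $\beta^l$ lives on the sphere, so what becomes negligible is not ``$\beta^1 U_\infty^1$ close to $U_\infty^1$'' in a static sense but rather the commutator and remainder terms involving $\beta^l\circ\rho_n^l$, which tend to zero because $\beta^l\circ\rho_n^l\to 1$ locally uniformly on $\HN$ as $R_n^l\to 0$; this is exactly how the paper handles it in Lemmas~\ref{lem: dE(vn) to 0} and~\ref{lem: the energy splits2}.
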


\noindent
Here the $B$'s stand for the balls on the sphere, $\delta$ and $\tau$ denote dilations and translations on the Heisenberg group, respectively. The proof of the previous theorem will be carried out in Section 3.\\
We want explicitly to mention that we recently found a paper on arXiv (\cite{LW}), in which the authors prove an existence result for the fractional Q-curvature problem on the three dimensional CR sphere: in their Lemma 2.1, they claim a behavior for Palais-Smale sequences along some flow lines, similar to our Theorem 1.1; the proof is missing, the authors cite a couple of papers, which in turn consider only local operators. To the best of our knowledge, we did not find any references dealing with these peculiar issues that we are considering in the present paper.

\noindent
Once we have characterized the Palais-Smale sequences, in Section 4 as main application we will prove a multiplicity result for equation (\ref{eq: problem on Sph}). We will argue by contradiction as in \cite{MMT2015}; in particular, with the help of some special groups of isometries, we will restrict the functional $E$ to some special subspaces and we will assume that the Palais-Smale condition fails: the action of the groups and the boundedness of the energy will lead to a contradiction. Therefore, a standard application of the minimax argument will give us the following result
\begin{theorem}\label{thm: infinitelysolutions}
There exist infinitely many solutions of \eqref{eq: problem on H} (or equivalently of \eqref{eq: problem on Sph}), distinct from the standard bubbles.
\end{theorem}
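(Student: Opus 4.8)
The plan is to argue by contradiction in the spirit of \cite{MMT2015}, using the characterization of Palais--Smale sequences from Theorem \ref{thm: classification of PS sequences} as the main tool. First I would fix a suitable compact subgroup $G$ of the CR automorphism group of $\Sph$ acting without fixed points (for instance a subgroup of $\U$ acting freely on $\Sph$, built from a free action of a finite cyclic or torus group), and restrict the functional $E$ to the subspace $H^k_G(\Sph)$ of $G$-invariant functions. By the principle of symmetric criticality, critical points of $E|_{H^k_G}$ are genuine solutions of \eqref{eq: problem on Sph}. The key consequence of working in the invariant subspace is that the standard bubbles $v_n^l$ concentrating at a single point $\zeta^l$ are \emph{not} $G$-invariant: any $G$-invariant function that looks like a sum of bubbles must carry bubbles at every point of a $G$-orbit, so the energy of a $G$-invariant Palais--Smale sequence that is \emph{not} relatively compact is at least $(\#\,\text{orbit})\cdot S_k$ plus a non-negative term, where $S_k$ is the least energy of a single bubble $E_{\HN}(U_\infty^l)$. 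This is where Theorem \ref{thm: classification of PS sequences} enters: applied inside $H^k_G$ it forces any non-compact $G$-invariant Palais--Smale sequence to decompose into $G$-orbits of bubbles, hence its energy level is quantized and bounded below by a large multiple of $S_k$.

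Next I would set up the minimax scheme. Assume for contradiction that, above some level, $E|_{H^k_G}$ satisfies the Palais--Smale condition (equivalently, that there are only finitely many solutions, so that we may choose $G$ with free orbits so large that no genuine $G$-invariant solution other than the trivial one and possibly the obtained ones can exist below the first ``orbit-of-bubbles'' threshold). Using the $\mathbb{Z}_2$-symmetry $u\mapsto -u$ of $E$ together with the usual Krasnoselskii genus (or the cohomological index), one constructs an unbounded sequence of minimax values
\begin{equation*}
c_j=\inf_{A\in\Gamma_j}\ \sup_{u\in A} E(u),\qquad \Gamma_j=\{A\subset H^k_G\setminus\{0\}:\ A\ \text{symmetric closed},\ \mathrm{genus}(A)\geq j\},
\end{equation*}
and one checks in the standard way that $c_j\to+\infty$. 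If the Palais--Smale condition held at each level $c_j$, each $c_j$ would be a critical value and, by a deformation/genus argument, would produce at least one solution; letting $j\to\infty$ would give infinitely many solutions of \eqref{eq: problem on Sph}, none of which is a standard bubble (bubbles are not $G$-invariant), which already gives the theorem. So the remaining case is that Palais--Smale fails at some level; here Theorem \ref{thm: classification of PS sequences} applied in $H^k_G$ shows that the failure level is at least (cardinality of a free $G$-orbit) times $S_k$, and by choosing $G$ large we can push this threshold above any prescribed $c_j$ of interest, forcing compactness back and completing the contradiction.

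The main obstacle will be making the interplay between the symmetry and the bubble decomposition quantitatively tight: specifically, showing that a $G$-invariant Palais--Smale sequence that is not relatively compact must have its concentration points forming full $G$-orbits (so the energy count is genuinely multiplied by the orbit size), and ruling out that the limit $u_\infty$ or ``cross terms'' between bubbles at different orbit points could lower the effective threshold. This requires checking that the bubbles $v_n^l$ produced by Theorem \ref{thm: classification of PS sequences} are asymptotically orthogonal in $H^k(\Sph)$ even when several of them are translated copies under $G$, so that assertions (i)--(ii) of that theorem upgrade, on the invariant subspace, to a statement of the form $E(u_n)=E(u_\infty)+(\#\,\text{orbit})\,E_{\HN}(U_\infty)+o(1)$. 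A secondary technical point is verifying that the invariant Sobolev space $H^k_G(\Sph)$ still embeds (compactly away from the orbit-bubbling scale) and that the minimax levels $c_j$ are finite and diverge, which follows from the fact that $H^k_G$ is infinite-dimensional whenever $G$ acts freely with positive-dimensional quotient; I would cite the analogous arguments in \cite{MMT2015} for the local case and adapt them, the only genuinely new input being the non-local bubble analysis already established in Theorem \ref{thm: classification of PS sequences}.
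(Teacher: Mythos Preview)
Your overall architecture---restrict $E$ to a $G$-invariant subspace, use the principle of symmetric criticality, invoke Theorem \ref{thm: classification of PS sequences} to control Palais--Smale sequences, and run an Ambrosetti--Rabinowitz minimax---is exactly the paper's strategy. But you miss the key simplification, and this creates a genuine gap in your ``push the threshold'' step.

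The paper does not use a group with \emph{finite} orbits. It takes $G\subset\U$ such that \emph{every} $G$-orbit in $\Sph$ has an accumulation point, hence is infinite (Lemma \ref{lem: E_XG is PS}). Then the argument is one line: if a $G$-invariant (PS) sequence failed to converge, Theorem \ref{thm: classification of PS sequences} gives a \emph{finite} nonempty set $\Theta$ of concentration points, but $G$-invariance forces $\Theta$ to contain a full orbit of any of its points, which is infinite---contradiction. So $E|_{X_G}$ satisfies (PS) at \emph{all} levels, and the minimax of Lemma \ref{lem: Amb-Rab} produces an unbounded sequence of critical values directly. No thresholds, no quantization bookkeeping, no asymptotic orthogonality of orbit-translated bubbles.

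Your version, with $G$ finite (or treated as if $\#\text{orbit}$ were a finite parameter), only yields (PS) below the level $(\#\text{orbit})\cdot C_E$. The step ``by choosing $G$ large we can push this threshold above any prescribed $c_j$'' does not work as stated: the minimax values $c_j$ are computed in $H^k_G$, so they \emph{depend on $G$}. Enlarging $G$ shrinks the invariant subspace and moves all the $c_j$; you cannot hold $c_j$ fixed while letting the threshold grow past it. Making this quantitative would require uniform control of $c_j$ over a family of groups, which you have not set up. The paper sidesteps the whole issue by taking the orbit size to be infinite from the start, so the ``threshold'' is $+\infty$ and (PS) holds everywhere. Your concerns about cross terms and asymptotic orthogonality between bubbles at distinct orbit points then never arise.
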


\noindent
Moreover, depending on the choice of the group of isometries, the existence of sign changing solutions can be shown. In this setting, we recall the paper \cite{MM}, where the existence of infinitely many sign changing solutions was proven for the standard CR-Yamabe equation on the sphere (here $k=1$), by following the idea of Ding \cite{D} combined with the action of the group of isometries generated by the Reeb vector field of the standard sphere. Moreover, recently in \cite{Kr}, under a technical assumption on the range of the parameter $k$, the author proved the existence of a number of sequences of sign-changing solutions of equation \eqref{eq: problem on Sph}, whose elements have mutually different nodal properties. The proof is based again on Ding's approach and on a iterative argument as in \cite{aubin}, starting from the result in \cite{MM} (the assumption on $k$ makes the iteration works fine). Also, in his Remark 3.2, the author wonders if his technical assumption could be removed in order to gain the compactness of some Sobolev embeddings: it seems that we can remove this assumption and still obtain existence of solutions.

\bigskip

\noindent
{\bf Acknowledgement}
The second author aknowledge the financial support of the Seed Grant of AURAK, No.: AAS/001/18, \emph{Critical Problems in the Sub-Elliptic Setting.}\\


\section{Definitions and notation}

\noindent
We identify the Heisenberg  group $\HN$ with $\C^N\times \R\simeq\R^{2N+1}$ with elements $w=(z,t)=(x+iy,t)\simeq(x,y,t)\in \R^N\times\R^N\times\R$ and group law
\begin{equation*}
w\cdot w'=(z,t)\cdot(z',t')=(z+z',t+t'+2\text{Im}(z\ol{z'}))\quad\forall\;  w,w'\in\HN,
\end{equation*}
where $\text{Im}$ denotes the imaginary part of a complex number and $z\ol{z'}$ is the standard Hermitian inner product in $\C^N$. Left translations on $\HN$ are defined by
\begin{equation*}
\tau:\HN\to\HN \qquad \tau_{w}(w')=w\cdot w'\quad \forall\;w\in\HN
\end{equation*}
and dilations are
\begin{equation*}
\delta_{\lambda}:\HN\to\HN \qquad \delta_{\lambda}(z,t)=(\lambda z,\lambda^2 t)\quad \forall\;\lambda>0.
\end{equation*}
The homogeneous dimension of $\HN$ with respect to $\delta_{\lambda}$ will be denoted by $Q=2N+2$. The natural distance that we will adopt in our setting is the Kor\'anyi distance, given by
\begin{equation*}
d((z,t),(z',t'))=\left(|z-z'|^4+(t-t'-2\textnormal{Im}(z\ol{z'}))^2\right)^{\frac{1}{4}}
\end{equation*}
and we denote by $B_R^w$ the ball of center $w$ and radius $R>0$ defined by the distance $d$. Moreover we denote by
\begin{equation*}
\theta_{\H}=\text{d}t+2\sum_{j=1}^{N}(x_i\text{d}y_i-y_i\text{d}x_i)
\end{equation*}
the standard contact form on $\HN$ and by $\dH$ the volume form associated to $\theta_{\H}$. The canonical basis of left invariant vector fields on $\HN$ is given by
\begin{equation*}
X_j=\frac{\partial}{\partial x_j}+2y_j\frac{\partial}{\partial t},\quad Y_j=\frac{\partial}{\partial y_j}-2x_j\frac{\partial}{\partial t},\quad T=\frac{\partial}{\partial t}, \quad j=1,\dots,N.
\end{equation*}
and the sub-Laplacian operator associated to this Carnot structure is given by
\begin{equation*}
\Delta_b=\frac{1}{4}\sum_{j=1}^{N}\left(X_j^2+Y_j^2\right).
\end{equation*}
The Heisenberg group can be identified with the unit sphere in $\C^{N+1}$ minus a point through the Cayley transform $\Cay:\HN\to S^{2N+1}\setminus \{(0,\dots,0,-1)\}$ defined as follows
\begin{equation*}
\Cay(z,t)=\left(\frac{2z}{1+|z|^2+it},\frac{1-|z|^2-it}{1+|z|^2+it}\right).
\end{equation*}
On the unit sphere $S^{2N+1}=\{\zeta\in\C^{N+1}:\; |\zeta|=1\}$ we consider the distance
\begin{equation*}
d(\zeta,\eta)^2=2|1-\zeta\ol{\eta}|,\quad \zeta,\eta\in \C^{N+1}
\end{equation*}
and we denote by $B_R(\zeta)\subset \Sph$ the ball of center $\zeta$ and radius $R>0$. With this definition of $d$, the relation between the distance of two points $w=(z,t)$, $w'=(z',t')$ in $\HN$ and the distance of their images $\Cay(w)$, $\Cay(w')$ in $\Sph$, is given by
\begin{equation*}
d(\Cay(w),\Cay(w'))=d(w,w')\left(\frac{4}{(1+|z|^2)^2+t^2}\right)^{\frac{1}{4}}\left(\frac{4}{(1+|z'|^2)^2+t'^2}\right)^{\frac{1}{4}}.
\end{equation*}
From this relation we deduce the following inclusions
\begin{equation}\label{eq: inclusions B and preimages}
\Cay^{-1}(B_R(\zeta))\supseteq B_{\frac{R}{2}}^{\Cay^{-1}(\zeta)}\quad\textnormal{for every } R>0
\end{equation}
and
\begin{equation*}
\Cay^{-1}(B_R(\mathcal{N}))\subseteq B_{R}^0\quad\textnormal{for every } 1\geq R>0
\end{equation*}
where $\mathcal{N}$ is the point $(1,0,\dots,0)\in\Sph$. On $\Sph$, we consider the standard contact form
\begin{equation*}
\theta_{S}=i\sum_{j=1}^{N+1}(\zeta_j\text{d}\ol{\zeta}_j-\ol{\zeta}_j\text{d}\zeta_j),
\end{equation*}
and we denote by $\dSph$ the volume form associated to $\theta_{S}$. The conformal sub-Laplacian is then
\begin{equation*}
\mathcal{A}_2=-\frac{1}{2}\sum_{j=1}^{N+1} (T_j\ol{T}_j+\ol{T}_jT_j) +\frac{N^2}{4}
\end{equation*}
where $T_j$ are the differential operators defined by
\begin{equation*}
T_j=\frac{\partial}{\partial \zeta_j}-\ol{\zeta_j}\sum_{k=1}^{N+1}\zeta_k\frac{\partial}{\partial\zeta_k},\; j=1,\dots, N+1.
\end{equation*}
Let $ \mathcal{H}_{j,l}$ be the space of harmonic polynomials on $\C^{N+1}$  homogeneous of degree $j$ and $l$ in variables $z$ and $\ol{z}$ respectively, restricted to $S^{2N+1}$. The Hilbert space $L^2(S^{2N+1})$ decomposes as $L^2(S^{2N+1})=\bigoplus_{j,l\geq 0} \mathcal{H}_{j,l}$ and we denote by $y_{j,l}^m$ an orthonormal basis  for the space $\mathcal{H}_{j,l}$, in particular we require $y_{j,l}^m$ to be eigenfunction for the conformal sub-Laplacian $\mathcal{A}_2$. Then, the conformal sub-Laplacian acts on $y_{j,l}^m$ as $\mathcal{A}_2 y_{j,l}^m=\lambda_j\lambda_l y_{j,l}^m $, where $\lambda_j=j+\frac{n}{2}$.
Let us fix $0<2k<Q$, and consider \begin{equation*}
u=\sum_{j,l}\sum_{m=1}^{\text{dim}(\mathcal{H}_{j,l})}c_{j,l}^m(u)y_{j,l}^m \in L^2(S^{2N+1}).
\end{equation*}  We define the operator
\begin{equation*}
\mathcal{A}^{k}u=\sum_{j,l}\sum_{m=1}^{\text{dim}(\mathcal{H}_{j,l})}(\lambda_j\lambda_l)^{\frac{k}{2}}c_{j,l}^m(u)y_{j,l}^m
\end{equation*}
where $\text{dim}(\mathcal{H}_{j,l})=\frac{(j+N-1)!(l+N-1)!(j+l+N)}{N!(N-1)!j!l!}$ is the dimension of $ \mathcal{H}_{j,l}$. Moreover, we define the Sobolev space
\begin{equation*}
H^k(S^{2N+1})=\left\{u\in L^2(\Sph):\mathcal{A}^{k}u\in L^2(\Sph)\right\}
\end{equation*}
with inner product
\begin{equation*}
\la u,v\ra_{k}=\int_{\Sph}\mathcal{A}^{k}u\ol{\mathcal{A}^{k}v}\;\dSph
\end{equation*}
and norm
\begin{equation*}
\|u\|_{k}=\la u,u\ra_{k}^{\frac{1}{2}}=\left(\sum_{j,l}\sum_{m=1}^{\text{dim}(\mathcal{H}_{j,l})}(\lambda_j\lambda_l)^k|c_{j,l}^m(u)|^2\right)^{\frac{1}{2}}.
\end{equation*}
We consider the intertwining operator $\LSph$ on $\Sph$ defined, up to multiplicative constants, by \begin{equation}\label{eq: intertwining}
\text{Jac}_{\tau}^{\frac{Q+2k}{2Q}}(\LSph u)\circ\tau=\LSph\left(\text{Jac}_{\tau}^{\frac{Q-2k}{2Q}}(u\circ\tau)\right)\quad \forall \tau\in\text{Aut}(\Sph),\; u\in C^{\infty}(\Sph).
\end{equation}
Moreover from now on we endow $\SobSph$ with the inner product
\begin{equation*}
\la u,v\ra_{H^k}=\sum_{j,l}\sum_{m=1}^{\text{dim}(\mathcal{H}_{j})}(\lambda_j(k))^{2k}c_{j}^m(u)\ol{c_{j}^m(v)}=\int_{\Sph}\ol{v}\LSph u\;\dSph
\end{equation*}
with \begin{equation*}
\lambda_j(k)=\frac{\Gamma\left(\frac{Q+2k}{4}+j\right)}{\Gamma\left(\frac{Q-2k}{4}+j\right)}, \quad j=0,1,\dots
\end{equation*}
and norm $\SobNorm{u}{k}=\left(\int_{\Sph}\ol{u}\LSph u\;\dSph\right)^{\frac{1}{2}} $ which is equivalent to $\|u\|_k$. The dual of $\SobSph$ will be denoted by $H^{-k}$. In $\HN$ the symbol of the intertwining operators is defined, up to a multiplicative constant, by
\begin{equation*}
\LH=|2T|^k\frac{\Gamma\left( \frac{-\Delta_{b}}{|2T|}+\frac{1+k}{2}\right)}{\Gamma\left( \frac{-\Delta_{b}}{|2T|}+\frac{1-k}{2}\right)};
\end{equation*}
we choose the multiplicative constant to be equal $1$ so that we recover $\mathcal{L}_2=-\Delta_{b}$ and $\mathcal{L}_4=(-\Delta_{b})^2-T^2.$  Hereafter we consider only real valued functions. The quadratic form associated to $\LH$ will be denoted by $a_{2k}$ :
\begin{equation*}
a_{2k}	\left(U\right)=\int_{\HN}U\LH U\dH
\end{equation*}
and we define the space
\begin{equation*}
D^k(\HN)=\left\{\ U\in L^{\frac{2Q}{Q-2k}}(\HN):\; a_{2k}<+\infty \right\}.
\end{equation*}
The operators $\LSph$ and $\LH$ are related by the following identity
\begin{equation}\label{eq: relationship LH LSph}
\LH\left(\Jacc^{\frac{Q-2k}{2Q}}(u\circ \Cay)\right)=\Jacc^{\frac{Q+2k}{2Q}}(\LSph u)\circ \Cay \quad \forall u\in \SobSph
\end{equation}
where $\Jacc$ is twice the absolute value of the Jacobian determinant of the Cayley transform
\begin{equation*}
\Jacc=\frac{2^Q}{\left((1+|z|^2)^2+t^2\right)^{N+1}}.
\end{equation*}
We recall now the following sharp Sobolev inequality that was proved by Frank and Lieb in \cite{FL}
\begin{equation}\label{eq: Sobolev inequality Sph}
\left(\int_{\Sph}|u|^{\frac{2Q}{Q-2k}}\dSph\right)^{\frac{Q-2k}{Q}}\leq C_S\int_{\Sph}u\LSph u \dSph
\end{equation}
where
\begin{equation}\label{eq: Sobolev constant Sph}
C_S(k,N)=\frac{\Gamma\left(\frac{N+1-k}{2}\right)^2}{\Gamma\left(\frac{N+1+k}{2}\right)^2}( \omega_{2N+1}2^{2N+1}N!)^{-\frac{2k}{Q}},
\end{equation}
$\omega_{2N+1}$ is the measure of $S^{2N+1}$ and
\begin{equation*}
p^*=\frac{2Q}{Q-2k}
\end{equation*}
is the critical exponent. Indeed the embedding
\begin{equation}\label{emb}
H^k(\Sph)\hookrightarrow L^{p^*}(\Sph)
\end{equation}
is continuous but not compact and this is due to the scale invariance of the norms, induced by the action of the conformal group. Also, we will denote by $\bar p=(p^{*})'=\frac{2Q}{Q+2k}$ and it follows from $(\ref{emb})$ that
$$L^{\bar{p}}(\Sph)\hookrightarrow H^{-k}(\Sph).$$
For $\Omega\subset\HN$ open and bounded we denote by $H^k_0(\Omega)$ the closure of $C_0^{\infty}(\Omega)$ with respect to the norm
\begin{equation*}
\|U\|_{H^k_0(\Omega)}=\left(\int_{\Omega}U\LH U\dH\right)^{\frac{1}{2}}
\end{equation*}
and it holds
\begin{align*}
H^k_0(\Omega)&\hookrightarrow L^{p^*}(\Omega).
\end{align*}
Optimizer functions for \eqref{eq: Sobolev inequality Sph} are images through the Cayley transform of functions of the type $\lambda^{\frac{2k-Q}{2}}\omega\circ\delta_{\lambda^{-1}}\circ\tau_{\xi^{-1}}$ where
\begin{equation}\label{eq: optimizer function}
\omega(z,t)=\frac{c(Q)}{\left((1+|z|^2)^2+t^2\right)^{\frac{Q-2k}{4}}} \;,
\end{equation}
for a suitable positive constant $c(Q)$ (see \cite{FL}). These functions satisfy the equation
\begin{equation*}
\LH U=|U|^{p^*-2}U \quad \text{on }\HN\quad U\in D^k(\HN),
\end{equation*}
hence they are critical points for the energy functional $E_{\H}$ defined on $D^{k}(\HN) $ by
\begin{equation*}
E_{\H}(U)=\frac{1}{2}\int_{\HN}U\LH U\dH-\frac{1}{p^*}\int_{\HN}|U|^{p^*}\dH.
\end{equation*}
In fact the functions $\omega_{\lambda,\xi}=\lambda^{\frac{2k-Q}{2}}\omega\circ\delta_{\lambda^{-1}}\circ\tau_{\xi^{-1}}$ are (the only) ground state solutions of $E_{\H}$.


\section{Classification of the Palais-Smale sequences}

\noindent
Let $H$ be an Hilbert space, a sequence $\{x_n\}_{n\in\N}\subseteq H$ is called a Palais-Smale (PS) sequence for $F\in C^1(H,\mathbb{R})$ at level $c$ if $F(x_n)\to c$ and $\nabla F(x_n)\to 0$.  $F$ is said to satisfy the Palais-Smale condition if any (PS) sequence admits a converging subsequence.\\

\noindent
Now, we begin the proof of our main result, that is Theorem \ref{thm: classification of PS sequences}.


\begin{lemma}\label{lem: boundness of PS sequences}
Every (PS) sequence $u_n$ for $E$ is bounded.
\end{lemma}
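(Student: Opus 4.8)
The plan is to use the standard argument for establishing boundedness of Palais--Smale sequences for functionals with the mountain-pass geometry coming from a subcritical-type balance between the quadratic and the $p^*$-homogeneous terms. Let $u_n$ be a (PS) sequence at level $c$, so that $E(u_n)\to c$ and $\nabla E(u_n)\to 0$ in $H^{-k}$. The key is to combine a bound on $E(u_n)$ with a bound on $\langle\nabla E(u_n),u_n\rangle$. Concretely, for $n$ large one has $E(u_n)\le c+1$ and $|\langle\nabla E(u_n),u_n\rangle|\le \SobNorm{u_n}{k}$ (using that $\|\nabla E(u_n)\|_{H^{-k}}\le 1$ eventually).

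The computation is as follows: writing $E(u_n)=\tfrac12\SobNorm{u_n}{k}^2-\tfrac1{p^*}\LpNorm{u_n}{p^*}^{p^*}$ and $\langle\nabla E(u_n),u_n\rangle=\SobNorm{u_n}{k}^2-\LpNorm{u_n}{p^*}^{p^*}$, one forms the combination
\begin{equation*}
E(u_n)-\frac{1}{p^*}\langle\nabla E(u_n),u_n\rangle=\left(\frac12-\frac{1}{p^*}\right)\SobNorm{u_n}{k}^2.
\end{equation*}
Since $0<2k<Q$ we have $p^*=\tfrac{2Q}{Q-2k}>2$, hence the constant $\tfrac12-\tfrac1{p^*}$ is strictly positive. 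Therefore
\begin{equation*}
\left(\frac12-\frac{1}{p^*}\right)\SobNorm{u_n}{k}^2\le |E(u_n)|+\frac{1}{p^*}\SobNorm{u_n}{k}\le (c+1)+\frac{1}{p^*}\SobNorm{u_n}{k}
\end{equation*}
for $n$ large, which is an inequality of the form $a\,t^2\le b+d\,t$ with $a,d>0$; this forces $t=\SobNorm{u_n}{k}$ to stay bounded, and hence $\{u_n\}$ is bounded in $\SobSph$.

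There is essentially no serious obstacle here: the only points to be careful about are (i) making precise the passage from $\nabla E(u_n)\to 0$ to the estimate $|\langle\nabla E(u_n),u_n\rangle|\le\SobNorm{u_n}{k}$ for $n$ large (which is just Cauchy--Schwarz between $H^{-k}$ and $\SobSph$, together with $\|\nabla E(u_n)\|_{H^{-k}}\to 0$), and (ii) noting that the elementary inequality $at^2\le b+dt$ with $a>0$ yields $t\le \tfrac{d+\sqrt{d^2+4ab}}{2a}$. Both are routine. One small remark worth recording is that this argument in no way uses compactness of the embedding \eqref{emb}; it relies only on the continuity of $\SobSph\hookrightarrow L^{p^*}(\Sph)$ implicit in the definition of $E$ on $\SobSph$, and on the strict inequality $p^*>2$.
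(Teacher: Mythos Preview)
Your proof is correct and follows essentially the same standard approach as the paper: form a linear combination of $E(u_n)$ and $\langle dE(u_n),u_n\rangle$ that exploits $p^*>2$ to obtain a quadratic-versus-linear inequality in $\SobNorm{u_n}{k}$. The only cosmetic difference is that the paper first eliminates the $H^k$ term to bound $\LpNorm{u_n}{p^*}^{p^*}$ and then substitutes back, whereas you eliminate the $L^{p^*}$ term directly; your route is slightly more streamlined but the idea is identical.
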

\begin{proof}
Let $u_n$ be a (PS) sequence for $E$ at level $c$ i.e.
\begin{equation*}
E(u_n)\to c,\quad dE(u_n)\to 0\; \text{in } \mSobSph.
\end{equation*}
Therefore we have
\begin{align*}
2c+o(1)+o(1)\|u_n\|_{H^k}&\geq 2E(u_n)-\la dE(u_n),u_n \ra\\
&=\left(\frac{p^*-2}{p^*}\right)\int_{\Sph} |u_{n}|^{p^*}\dSph,
\end{align*}
hence
\begin{align*}
\|u_n\|_{H^k}^2&=2E(u_n)+\frac{2}{p^*}\int_{\Sph}|u_{n}|^{p^*}\dSph\\
&\leq 2c+o(1)+\frac{2}{p^*-2}\left(2c+o(1)+o(1)\|u_n\|_{H^k}\right).
\end{align*}
It follows that $u_n$ is bounded in $\SobSph$.
\end{proof}

\noindent
The result above implies that, up to a subsequence, there exists a function $u_{\infty}\in\SobSph$ such that
\begin{align}\label{eq: weak conv u_n Hk Sph}
&u_n\rightharpoonup u_{\infty}\quad \text{weakly in }\SobSph\;,\\ \label{eq: strong conv u_n Lp Sph}
&u_n\to u_{\infty}\quad \text{strongly in } L^p(\Sph) \quad \text{for } 1\leq p<p^* \;.
\end{align}
Moreover, $u_{\infty}$ is a weak solution to \eqref{eq: problem on Sph}. Indeed, since $u_n$ is a (PS) sequence for $E$, for any $\varphi\in\SobSph$ we have
\begin{equation*}
\int_{\Sph} \varphi \LSph u_n\dSph=\int_{\Sph} \varphi |u_n|^{p^*-2}u_n\dSph+o(1)
\end{equation*}
as $n\to\infty$, and by \eqref{eq: strong conv u_n Lp Sph} and \eqref{eq: weak conv u_n Hk Sph} we have respectively
\begin{align*}
\int_{\Sph}\varphi \LSph u_n \dSph&\to \int_{\Sph}\varphi \LSph u_{\infty} \dSph\\
\int_{\Sph} \varphi |u_n|^{p^*-2}u_n\dSph&\to \int_{\Sph} \varphi |u_{\infty}|^{p^*-2}u_{\infty}\dSph
\end{align*}
showing that $u_{\infty}$ weakly satisfies \eqref{eq: problem on Sph}. We set $v_n=u_n-u_{\infty}$, with this notation we have the following
\begin{lemma}\label{lem: the energy splits}
The sequence $v_n$ is a (PS) sequence for $E$. More precisely, it holds
\begin{align*}
E(v_n)=E(u_n)-E(u_{\infty})+o(1)\\
\intertext{and}
dE(v_n)\to 0, \quad \text{in }\mSobSph.
\end{align*}
\end{lemma}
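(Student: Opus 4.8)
\emph{Approach.} This is a Brezis--Lieb type splitting, performed in the intertwining inner product $\langle\cdot,\cdot\rangle_{H^k}$. The two ingredients are the weak/strong convergences \eqref{eq: weak conv u_n Hk Sph}--\eqref{eq: strong conv u_n Lp Sph} and the fact that $u_\infty$ solves \eqref{eq: problem on Sph} weakly. First I would fix a further subsequence along which $u_n\to u_\infty$ almost everywhere on $\Sph$ (possible by \eqref{eq: strong conv u_n Lp Sph}), so that also $v_n=u_n-u_\infty\to 0$ a.e.; note $v_n\rightharpoonup 0$ weakly in $\SobSph$ and $v_n$ is bounded in $\SobSph$ (hence in $L^{p^*}(\Sph)$ by \eqref{emb}) thanks to Lemma \ref{lem: boundness of PS sequences}.

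\emph{Energy splitting.} Expanding the quadratic form, $\|v_n\|_{H^k}^2=\|u_n\|_{H^k}^2-2\langle u_n,u_\infty\rangle_{H^k}+\|u_\infty\|_{H^k}^2$, and $\langle u_n,u_\infty\rangle_{H^k}\to\|u_\infty\|_{H^k}^2$ by weak convergence, so $\|v_n\|_{H^k}^2=\|u_n\|_{H^k}^2-\|u_\infty\|_{H^k}^2+o(1)$. For the nonlinear term I invoke the Brezis--Lieb lemma: since $\{u_n\}$ is bounded in $L^{p^*}(\Sph)$ and $u_n\to u_\infty$ a.e., one has $\int_{\Sph}|u_n|^{p^*}\dSph=\int_{\Sph}|v_n|^{p^*}\dSph+\int_{\Sph}|u_\infty|^{p^*}\dSph+o(1)$. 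Combining the two identities yields $E(v_n)=E(u_n)-E(u_\infty)+o(1)$, and in particular $E(v_n)\to c-E(u_\infty)$.

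\emph{Gradient splitting.} For $\varphi\in\SobSph$ write $\langle dE(v_n),\varphi\rangle=\langle v_n,\varphi\rangle_{H^k}-\int_{\Sph}|v_n|^{p^*-2}v_n\,\varphi\,\dSph$. Splitting $\langle v_n,\varphi\rangle_{H^k}=\langle u_n,\varphi\rangle_{H^k}-\langle u_\infty,\varphi\rangle_{H^k}$, the first term equals $\langle dE(u_n),\varphi\rangle+\int_{\Sph}|u_n|^{p^*-2}u_n\,\varphi\,\dSph$ and the second equals $\int_{\Sph}|u_\infty|^{p^*-2}u_\infty\,\varphi\,\dSph$ since $u_\infty$ weakly solves \eqref{eq: problem on Sph}. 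Using $\|dE(u_n)\|_{H^{-k}}\to0$ gives
\[
\langle dE(v_n),\varphi\rangle=\int_{\Sph}\Big(|u_n|^{p^*-2}u_n-|u_\infty|^{p^*-2}u_\infty-|v_n|^{p^*-2}v_n\Big)\varphi\,\dSph+o(1)\|\varphi\|_{H^k}.
\]
By H\"older with the conjugate exponents $\bar p$ and $p^*$ and the embedding \eqref{emb}, it then suffices to prove that $g_n:=|u_n|^{p^*-2}u_n-|v_n|^{p^*-2}v_n\to|u_\infty|^{p^*-2}u_\infty$ in $L^{\bar p}(\Sph)$. This is the dual Brezis--Lieb lemma: from the elementary inequality $\big||a+b|^{p^*-2}(a+b)-|b|^{p^*-2}b\big|\le\e|b|^{p^*-1}+C_{\e}|a|^{p^*-1}$ applied with $a=u_\infty$, $b=v_n$, together with the a.e.\ convergence $g_n\to|u_\infty|^{p^*-2}u_\infty$ and the $L^{\bar p}$-boundedness of $\{|u_n|^{p^*-2}u_n\}$ and $\{|v_n|^{p^*-2}v_n\}$, a Vitali/Fatou argument (exactly as in the classical proof) gives the claim. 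Hence $\sup_{\|\varphi\|_{H^k}\le 1}|\langle dE(v_n),\varphi\rangle|\to0$, i.e.\ $dE(v_n)\to0$ in $\mSobSph$, and $v_n$ is a (PS) sequence for $E$.

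\emph{Main obstacle.} All steps are ``soft'' once a.e.\ convergence is secured; the only delicate point is the uniform-in-$\varphi$ control of the nonlinear remainder, i.e.\ making the dual Brezis--Lieb statement rigorous with the correct integrability exponents $p^*$ and $\bar p$, and verifying that \emph{no} compactness of the embedding \eqref{emb} is used --- only its continuity (to bound the relevant $L^{p^*}$ and $L^{\bar p}$ norms) and pointwise a.e.\ convergence. I expect this bookkeeping to be the bulk of the argument, the rest being a direct expansion.
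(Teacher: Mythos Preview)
Your proof is correct and structurally identical to the paper's: expand the quadratic form via weak convergence, then control the nonlinear remainder. The only substantive difference is in how that remainder is handled. For the energy, you invoke the Brezis--Lieb lemma as a black box (after securing a.e.\ convergence along a subsequence), whereas the paper writes out an explicit Egorov-plus-H\"older argument: it uses the pointwise bound $\big||v_n+u_\infty|^{p^*}-|v_n|^{p^*}-|u_\infty|^{p^*}\big|\lesssim |v_n|^{p^*-1}|u_\infty|+|v_n||u_\infty|^{p^*-1}$, then splits $\Sph$ into an Egorov set $M_\e$ (uniform convergence of $v_n$ to $0$) and its small complement. For the gradient, you prove the stronger $L^{\bar p}$-convergence of $g_n$ via the dual Brezis--Lieb inequality, while the paper again runs Egorov/H\"older directly on $\int\Psi_n|\varphi|$. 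Your route is cleaner and cites the standard lemma; the paper's is more self-contained. Both need a.e.\ convergence, which the paper uses implicitly (Egorov requires it) and you make explicit---note that the passage to a subsequence has already been made in the paragraph preceding the lemma, so no harm is done.
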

\begin{proof}We have
\begin{align*}
2E(u_n)&=\int_{\Sph} (v_n+u_{\infty})\LSph (v_n+u_{\infty})\dSph-\frac{2}{p^*}\int_{\Sph}|v_n+u_{\infty}|^{p^*}\dSph\\
&=2E(v_n)+2E(u_{\infty})+2\la dE(u_{\infty}),v_n \ra+2\int_{\Sph}|u_{\infty}|^{p^*-2}u_{\infty}v_n\dSph+\\
&\qquad+\frac{2}{p^*}\int_{\Sph}|v_n|^{p^*}+|u_{\infty}|^{p^*}-|v_n+u_{\infty}|^{p^*}\dSph.
\end{align*}
Since $v_n\to 0$ in $L^p$ for every $1\leq p<p^*$, we have  $\int_{\Sph}|u_{\infty}|^{p^*-2}u_{\infty}v_n\dSph=o(1)$ as $n\to \infty$; moreover $dE(u_{\infty})=0$ , so that it remains to show that the last integral in the expression above goes to $0$ as $n\to \infty$.
It is possible to choose a big enough positive constant $C$ such that
\begin{equation*}
|\Phi_n|:=\left||v_n+u_{\infty}|^{p^*}-|v_n|^{p^*}-|u_{\infty}|^{p^*}\right|\leq C|v_n|^{p^*-1}|u_{\infty}|+C|v_n||u_{\infty}|^{p^*-1}.
\end{equation*}
Hence, by the H\"older inequality
\begin{align*}
\int_{\Sph}|\Phi_n|\dSph&=\int_{\Sph\setminus M_{\e}}|\Phi_n|\dSph+\int_{ M_{\e}}|\Phi_n|\dSph\\
&\leq \int_{\Sph\setminus M_{\e}}|\Phi_n|\dSph +C\left(\int_{ M_{\e}}|v_n|^{p^*}\dSph\right)^{\frac{p^*-1}{p^*}}\left(\int_{M_{\e}}|u_{\infty}|^{p^*}\dSph\right)^{\frac{1}{p^*}}\\
&\quad+ C\left(\int_{ M_{\e}}|u_{\infty}|^{p^*}\dSph\right)^{\frac{p^*-1}{p^*}}\left(\int_{M_{\e}}|v_n|^{p^*}\dSph\right)^{\frac{1}{p^*}}.
\end{align*}
Here $M_{\e}\subset\Sph$, defined for any $\e>0$ by the Egorov theorem, is such that $|\Sph\setminus M_{\e}|<\e$ and $v_n$ converges to $0$ uniformly on $M_{\e}$. So that, the first integral in the expression above converges to $0$ as $n\to \infty$, while the other two terms go to $0$ as $\e\to 0$, uniformly in $n$. Therefore we get the desired energy estimate. Now we prove that for any $\varphi \in \SobSph$ with $\SobNormB{\varphi}{k}{\Sph} \leq1$, it holds
\begin{equation*}
\la dE(v_n),\varphi\ra=o(1)\quad \text{as } n\to\infty.
\end{equation*}
We have
\begin{align*}
\la dE(u_n),\varphi\ra&=\int_{\Sph} \varphi\LSph(v_n+u_{\infty})\dSph-\int_{\Sph} |u_n|^{p^*-2}u_n\varphi\dSph\\
&=\la dE(v_n),\varphi\ra+\la dE(u_{\infty}),\varphi\ra+\\
&\qquad -\int_{\Sph}\left(|u_n|^{p^*-2}u_n-|v_n|^{p^*-2}v_n-|u_{\infty}|^{p^*-2}u_{\infty}\right)\varphi\dSph.
\end{align*}
Since $\la dE(u_{\infty}),\varphi\ra=0$ and $\la dE(u_n),\varphi\ra=o(1)$ as $n\to\infty$, it remains to show that the last integral in the equality above converges to $0$ as $n\to \infty$.
Again, for a big enough positive constant $C$ we have
\begin{align*}
|\Psi_n|:=\left||u_n|^{p^*-2}u_n-|v_n|^{p^*-2}v_n-|u_{\infty}|^{p^*-2}u_{\infty}\right|\leq C|v_n|^{p^*-2}|u_{\infty}|+C|v_n||u_{\infty}|^{p^*-2}
\end{align*}
and by the H\"older inequality and Egorov theorem
\begin{align*}
\left|\int_{\Sph}\Psi_n|\varphi|\dSph\right|&\lesssim \int_{\Sph}|v_n|^{p^*-2}|u_{\infty}||\varphi|\dSph+\int_{\Sph}|v_n||u_{\infty}|^{p^*-2}\varphi\dSph\\
&\lesssim \LpNorm{|v_n|^{p^*-2}|u_{\infty}| }{\frac{p^*}{p^*-1}}\LpNorm{\varphi }{p^*}+\LpNorm{|v_n||u_{\infty}|^{p^*-2} }{\frac{p^*}{p^*-1}}\LpNorm{\varphi }{p^*}\\
&\lesssim  \LpNorm{|v_n|^{p^*-2}|u_{\infty}| }{\frac{p^*}{p^*-1}}+\LpNorm{|v_n||u_{\infty}|^{p^*-2} }{\frac{p^*}{p^*-1}}\\
&\lesssim \LpNormB{|v_n|^{p^*-2}|u_{\infty}| }{\frac{p^*}{p^*-1}}{\Sph\setminus M_{\e}}+\LpNormB{|v_n||u_{\infty}|^{p^*-2} }{\frac{p^*}{p^*-1}}{\Sph\setminus M_{\e}}\\
&\quad +\LpNormB{|v_n|^{p^*-2}|u_{\infty}| }{\frac{p^*}{p^*-1}}{M_{\e}}+\LpNormB{|v_n||u_{\infty}|^{p^*-2} }{\frac{p^*}{p^*-1}}{M_{\e}}\\
&=o(1)\quad \text{as } n\to \infty \text{ and }\e\to 0,\textnormal{ uniformly in }n.
\end{align*}
This concludes the proof.
\end{proof}

\begin{lemma}\label{lem: critical energy level}
Let $u_n$ be a (PS) sequence at level $c<\frac{k}{Q} C_{S}^{-\frac{Q}{2k}}$, then $u_n$ converges strongly to $0$ in $\SobSph$. Here $C_S$ is the Sobolev constant defined in \eqref{eq: Sobolev constant Sph}.
\end{lemma}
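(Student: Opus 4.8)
The plan is to run the classical ``energy below the first bubble'' threshold argument, which in this setting can be carried out entirely at the level of the norm $\|\cdot\|_{H^k}$, relying only on Lemma \ref{lem: boundness of PS sequences} and the sharp Sobolev inequality \eqref{eq: Sobolev inequality Sph}; no weak limit or concentration analysis is needed for this lemma.

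First I would extract the two scalar identities satisfied by a $(PS)$ sequence at level $c$. Since $u_n$ is bounded in $\SobSph$ by Lemma \ref{lem: boundness of PS sequences} and $dE(u_n)\to 0$ in $\mSobSph$, testing the differential against $u_n$ itself gives
\[
\|u_n\|_{H^k}^2-\int_{\Sph}|u_n|^{p^*}\dSph=\la dE(u_n),u_n\ra=o(1),
\]
while $E(u_n)\to c$ reads $\|u_n\|_{H^k}^2-\tfrac{2}{p^*}\int_{\Sph}|u_n|^{p^*}\dSph=2c+o(1)$. Forming $E(u_n)-\tfrac12\la dE(u_n),u_n\ra$ and using $\tfrac12-\tfrac1{p^*}=\tfrac{k}{Q}$ (equivalently $\tfrac{2p^*}{p^*-2}=\tfrac{Q}{k}$), one obtains that both $\int_{\Sph}|u_n|^{p^*}\dSph$ and $\|u_n\|_{H^k}^2$ converge to the same limit $L:=\tfrac{Q}{k}c$, which is automatically nonnegative since $\|u_n\|_{H^k}^2\ge 0$.

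Then I would feed this into \eqref{eq: Sobolev inequality Sph}: from $\big(\int_{\Sph}|u_n|^{p^*}\dSph\big)^{\frac{Q-2k}{Q}}\le C_S\|u_n\|_{H^k}^2$, passing to the limit yields $L^{\frac{Q-2k}{Q}}\le C_S L$. If $L>0$ we may divide by $L$; since $\tfrac{Q-2k}{Q}-1=-\tfrac{2k}{Q}<0$, this rearranges to $L\ge C_S^{-Q/(2k)}$, i.e. $c=\tfrac{k}{Q}L\ge \tfrac{k}{Q}C_S^{-Q/(2k)}$, contradicting the hypothesis $c<\tfrac{k}{Q}C_S^{-Q/(2k)}$. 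Hence $L=0$, so $\|u_n\|_{H^k}\to 0$, which is exactly strong convergence of $u_n$ to $0$ in $\SobSph$.

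As for the main obstacle: there is essentially none of substance, this being the routine threshold step. The only points needing care are the exponent bookkeeping ($\tfrac12-\tfrac1{p^*}=\tfrac{k}{Q}$ and $\tfrac{p^*}{p^*-2}=\tfrac{Q}{2k}$), the observation that $c\ge 0$ so that the dichotomy ``$L=0$ or $L\ge C_S^{-Q/(2k)}$'' is meaningful, and invoking the boundedness from Lemma \ref{lem: boundness of PS sequences} in order to upgrade $\la dE(u_n),u_n\ra=o(\|u_n\|_{H^k})$ to $\la dE(u_n),u_n\ra=o(1)$.
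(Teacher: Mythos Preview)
Your proof is correct and follows essentially the same approach as the paper: both arguments combine the two scalar relations coming from $E(u_n)\to c$ and $\la dE(u_n),u_n\ra=o(1)$ with the sharp Sobolev inequality \eqref{eq: Sobolev inequality Sph}. The only cosmetic difference is organizational: the paper bounds $\|u_n\|_{H^k}^2<C_S^{-Q/(2k)}+o(1)$ and then factors $o(1)\ge \|u_n\|_{H^k}^2\bigl(1-C_S^{p^*/2}\|u_n\|_{H^k}^{p^*-2}\bigr)$, whereas you first pass to the common limit $L=\tfrac{Q}{k}c$ and derive the dichotomy $L=0$ or $L\ge C_S^{-Q/(2k)}$; mathematically these are the same computation.
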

\begin{proof}
By the Sobolev inequality \eqref{eq: Sobolev inequality Sph}, we have $\LpNorm{u_n}{p^*}^{p^*}\leq C_S^{\frac{p^*}{2}}\SobNorm{u_n}{k}^{p^*}$, so that
\begin{align*}
o(1)&=\int_{\Sph}u_n\LSph u_n\dSph-\int_{\Sph}|u_n|^{p^*}\dSph\\
&\geq \SobNorm{u_n}{k}^2\left(1-C_S^{\frac{p^*}{2}}\SobNorm{u_n}{k}^{p^*-2}\right).
\end{align*}
Now, following the argument given in Lemma \ref{lem: boundness of PS sequences},  we notice that the choice of $c$ in the statement implies
\begin{align*}
\SobNorm{u_n}{k}^2&\leq \frac{p^*}{p^*-2}2c+o(1)\\
&< C_S^{-\frac{Q}{2k}}+o(1).
\end{align*}
This, for $n$ big enough, ensures the positivity of the factor $1-C_S^{\frac{p^*}{2}}\SobNorm{u_n}{k}^{p^*-2}$, concluding the proof.
\end{proof}

\noindent
Hereafter we assume the (PS) sequence $(u_n)_{n\in \N}$ converges weakly to $0$  in $\SobSph$ and strongly in $L^p(\Sph)$ for $1\leq p<p^*$. Moreover, since we want to investigate the behavior of $(u_n)$ when the (PS) condition is not satisfied, we will assume that $u_n$ does not converge strongly to $0$ in $\SobSph$. For any $\e_0>0$, we define
\begin{equation*}
\Sigma_{\e_0}=\left\{ \zeta\in\Sph:\; \liminf_{r\to 0}\liminf_{n\to\infty}\int_{B_r(\zeta)}|u_n|^{p^*}\dSph\geq \e_0\right\}.
\end{equation*}
In the sequel we will need to localize our equation, therefore we will use some commutator estimates. For a given $k \in (0,\frac{Q}{2})$, we define the $3$-commutator $H^{\LH}(\cdot,\cdot)$ by
$$H^{\LH}(u,v)=\LH(uv)-u\LH(v)-v\LH(u),$$
and we let $R_{s}$ denote the Riesz potential on $\mathbb{H}^{N}$ (see Appendix). Then the following Lemma \cite{M} holds
\begin{lemmaMaal}\label{lem: commutator}
Let $0<2k<Q$ and $\epsilon>0$. Given $\tau_{1}$ and $\tau_{2}$ in $(\max\{0,2k-1\},2k]$ such that $\tau_{1}+\tau_{2}>2k$, there exists $L\in \mathbb{N}$, $s_{j,1}\in (0,\tau_{1})$, $s_{j,2}\in (0,\tau_{2})$, for $j=1,\cdots, L$, satisfying $\tau_{1}+\tau_{2}-s_{j,1}-s_{j,2}-2k\in [0,\epsilon)$ such that
\begin{equation}
|H^{\LH}(u,v)|(x)\lesssim \sum_{j=1}^{L}R_{\tau_{1}+\tau_{2}-s_{j,1}-s_{j,2}-2k}\Big(R_{s_{j,1}}|\mathcal{L}_{\tau_{1}}u|R_{s_{j,2}}|\mathcal{L}_{\tau_{2}}v|\Big)(x).
\end{equation}
\end{lemmaMaal}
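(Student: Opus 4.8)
The plan is to reduce the pointwise bound to a kernel estimate for $\LH$ together with a careful control of the increments of Riesz kernels along the group, and then to reassemble everything via the composition rule for Riesz potentials. For $0<2k<Q$ the conformal fractional sub-Laplacian admits a hypersingular representation
$$\LH u(x)=\mathrm{p.v.}\int_{\HN}\big(u(x)-u(x\cdot y)\big)\,\kappa_{2k}(y)\,\mathrm{d}y,\qquad \kappa_{2k}(y)\asymp d(y)^{-Q-2k},$$
which follows from the Frank--Lieb identification of $\LH^{-1}$ with the Riesz potential $R_{2k}$, whose kernel is $\asymp d(\cdot)^{2k-Q}$ (when $2k\ge 1$ one symmetrizes and uses second-order increments, but the hypotheses $\max\{0,2k-1\}<\tau_i\le 2k$ guarantee that only first-order increments of $u$ and $v$ actually enter). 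Substituting this into $H^{\LH}(u,v)=\LH(uv)-u\LH(v)-v\LH(u)$, the diagonal terms cancel and one is left with the bilinear form
$$H^{\LH}(u,v)(x)=-\mathrm{p.v.}\int_{\HN}\big(u(x)-u(xy)\big)\big(v(x)-v(xy)\big)\,\kappa_{2k}(y)\,\mathrm{d}y,$$
the cancellation being precisely the source of the regularity gain encoded in the hypothesis $\tau_1+\tau_2>2k$.

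Next I would write $u=R_{\tau_1}(\mathcal{L}_{\tau_1}u)$ and $v=R_{\tau_2}(\mathcal{L}_{\tau_2}v)$, legitimate because the family $\{\mathcal{L}_s\}$ composes, up to normalizing constants, as a one-parameter semigroup, so $\mathcal{L}_{\tau_i}\circ R_{\tau_i}=\mathrm{Id}$; inserting the representations $u(a)=\int K_{\tau_1}(a,w_1)\,\mathcal{L}_{\tau_1}u(w_1)\,\mathrm{d}w_1$ with $K_{\tau_1}(a,b)\asymp d(a^{-1}b)^{\tau_1-Q}$, and similarly for $v$, and applying Fubini, one dominates $|H^{\LH}(u,v)(x)|$ by a trilinear integral in $(y,w_1,w_2)$ of $|\mathcal{L}_{\tau_1}u(w_1)|\,|\mathcal{L}_{\tau_2}v(w_2)|$ against the kernel $\kappa_{2k}(y)\,\big|K_{\tau_1}(x,w_1)-K_{\tau_1}(xy,w_1)\big|\,\big|K_{\tau_2}(x,w_2)-K_{\tau_2}(xy,w_2)\big|$. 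The core step is then the increment estimate for $K_\tau$: setting $r=d(x^{-1}w)$, in the near region $d(y)\lesssim r$ one expands $d((xy)^{-1}w)^{\tau-Q}$ around $d(x^{-1}w)^{\tau-Q}$ along the group, using that $z\mapsto d(z)^{\tau-Q}$ is smooth and homogeneous away from the origin; the obstruction is that horizontal derivatives have homogeneity $1$ while the vertical field $T$ has homogeneity $2$, so a Littlewood--Paley/Taylor expansion of the kernel produces a \emph{finite} family of terms, each gaining a fractional power $d(y)^{s_{j,i}}$ with $0<s_{j,i}<\tau_i$ and a residual loss $\tau_1+\tau_2-s_{j,1}-s_{j,2}-2k\in[0,\epsilon)$ made as small as desired by taking the decomposition fine enough, while the far region $d(y)\gtrsim r$ is split termwise and absorbed by the decay of $\kappa_{2k}$. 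Integrating out $y$ over $d(y)\lesssim\min(d(x^{-1}w_1),d(x^{-1}w_2))$ gives $\int\kappa_{2k}(y)\,d(y)^{s_{j,1}+s_{j,2}}\,\mathrm{d}y\asymp[\min(d(x^{-1}w_1),d(x^{-1}w_2))]^{s_{j,1}+s_{j,2}-2k}$; bounding the minimum by a geometric mean and invoking $R_aR_b=R_{a+b}$ (valid since $\mu_j:=\tau_1+\tau_2-s_{j,1}-s_{j,2}-2k$ lies in $[0,\tau_1+\tau_2)\subset[0,Q)$) reconstitutes exactly $\sum_{j}R_{\mu_j}\big(R_{s_{j,1}}|\mathcal{L}_{\tau_1}u|\cdot R_{s_{j,2}}|\mathcal{L}_{\tau_2}v|\big)(x)$.

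The main obstacle is this kernel-increment decomposition on $\HN$: because the group is non-abelian and the homogeneous distance is only Lipschitz through the vertical direction, one cannot differentiate the Riesz kernel to arbitrary order, and the bookkeeping that converts this defect into the explicit finite family $\{s_{j,1},s_{j,2}\}_{j\le L}$ with controlled slack $\epsilon$ is the technical crux; a secondary, more routine point is justifying the hypersingular representation of the Gamma-ratio operator $\LH$ and the identity $\mathcal{L}_{\tau_i}R_{\tau_i}=\mathrm{Id}$ with clean constants. An alternative that bypasses the explicit kernels is a paraproduct splitting $uv=\Pi_{\mathrm{hl}}+\Pi_{\mathrm{lh}}+\Pi_{\mathrm{hh}}$ adapted to the sub-Laplacian's Littlewood--Paley projections, exploiting that the commutator annihilates the low--high diagonal; but extracting a genuinely \emph{pointwise} bound rather than a norm bound still forces one back to kernel estimates, so I would carry out the kernel route.
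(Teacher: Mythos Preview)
The paper does not contain a proof of this lemma: it is quoted verbatim from reference \cite{M} and used as a black box throughout Section~3. There is therefore nothing in the present paper to compare your attempt against.

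That said, your outline follows the strategy one expects (and that \cite{M} carries out, adapting the Euclidean scheme of \cite{S,LS} to Carnot groups): hypersingular representation of $\LH$, the algebraic cancellation giving the bilinear increment form, substitution $u=R_{\tau_1}\mathcal{L}_{\tau_1}u$, and increment estimates for the Riesz kernels. You correctly identify the kernel-increment decomposition on $\HN$ as the technical crux. One point in your reassembly is too loose, though: after integrating out $y$ you arrive at a kernel of the form $[\min(d(x^{-1}w_1),d(x^{-1}w_2))]^{s_{j,1}+s_{j,2}-2k}$ times the two Riesz kernels, and you propose to ``bound the minimum by a geometric mean and invoke $R_aR_b=R_{a+b}$''. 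That manipulation collapses the kernel to a product $d(x^{-1}w_1)^{\alpha_1}d(x^{-1}w_2)^{\alpha_2}$, which yields only the pointwise product $R_{s'_{j,1}}|\mathcal{L}_{\tau_1}u|(x)\cdot R_{s'_{j,2}}|\mathcal{L}_{\tau_2}v|(x)$, i.e.\ the case $\mu_j=0$ of the stated bound. The genuinely nested structure $R_{\mu_j}\big(R_{s_{j,1}}|\mathcal{L}_{\tau_1}u|\,R_{s_{j,2}}|\mathcal{L}_{\tau_2}v|\big)(x)$ with $\mu_j>0$ requires retaining an intermediate integration variable, and this is produced in \cite{M} by a dyadic splitting of the far region $d(y)\gtrsim\min(r_1,r_2)$ rather than by a single geometric-mean bound. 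So your plan is sound in architecture but the last step would need to be redone along those lines.
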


\noindent
Next, we state the first Lemma characterizing the concentration set:
\begin{lemma}\label{lem: concentration}
There exists $\e_0>0$ such that if $\zeta_0\notin\Sigma_{\e_0}$, then for a small enough $r>0$, we have $u_n\to 0$ in $H^k\left(B_r(\zeta_0)\right)$.
\end{lemma}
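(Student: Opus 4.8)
The plan is to show that on a small ball $B_r(\zeta_0)$ around a point with low concentration, the (PS) sequence $u_n$ converges strongly to $0$ in $H^k(B_r(\zeta_0))$. First I would fix $\e_0>0$ to be chosen small relative to the Sobolev constant $C_S$ (concretely, $\e_0$ will need to satisfy $C_S \e_0^{(p^*-2)/p^*} < 1$, or a similar smallness condition dictated by the nonlinear estimate below). Since $\zeta_0 \notin \Sigma_{\e_0}$, by definition of $\Sigma_{\e_0}$ we may pick $r>0$ (and then pass to a subsequence in $n$) so that $\int_{B_{2r}(\zeta_0)} |u_n|^{p^*}\dSph < \e_0$ for all large $n$; it is convenient to work on a slightly larger ball $B_{2r}(\zeta_0)$ and test against a cut-off supported there but equal to $1$ on $B_r(\zeta_0)$.

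\smallskip
\noindent
The heart of the argument is a localized testing of the equation. Choose $\beta\in C_0^\infty(B_{2r}(\zeta_0))$ with $\beta\equiv 1$ on $B_r(\zeta_0)$ and $0\le\beta\le 1$, and test the (PS) relation $\la dE(u_n),\varphi\ra = o(1)\|\varphi\|_{H^k}$ with $\varphi = \beta^2 u_n$ (or with $\beta u_n$, adjusting constants). Transferring everything to $\HN$ via the Cayley transform using \eqref{eq: relationship LH LSph} — which is legitimate since $\beta$ is compactly supported away from the excluded point, and \eqref{eq: inclusions B and preimages} guarantees the Heisenberg ball $B^{w_0}_{r/2}$ sits inside $\Cay^{-1}(B_r(\zeta_0))$ — one is led to estimate $\int \beta u_n \, \LH(\beta u_n)\dH$. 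Expanding $\LH(\beta u_n)$ and using the commutator $H^{\LH}(\beta,u_n)$ produces the main term $\int \beta^2 |u_n|^{p^*}\dH$, which is controlled by $\e_0^{2k/Q}\,\|\beta u_n\|_{H^k_0}^2$ after Hölder and Sobolev on the cut-off function, plus error terms of the form $\int u_n \big(\beta\,\LH\beta\big) u_n$ and the commutator contribution $\int u_n\, H^{\LH}(\beta,u_n)\dH$. The point is that, since $0<k$ may be fractional, $\beta \LH(\beta u_n)$ is genuinely nonlocal and one cannot simply integrate by parts: here the commutator estimates (Lemma (commutator estimates)) enter, writing $|H^{\LH}(\beta,u_n)|$ as a finite sum of iterated Riesz potentials $R_{\sigma}\big(R_{s_1}|\mathcal{L}_{\tau_1}\beta|\, R_{s_2}|\mathcal{L}_{\tau_2}u_n|\big)$ with all the "leftover" orders strictly positive and small.

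\smallskip
\noindent
The error terms are then absorbed. For the commutator term I would use the mapping properties of the Riesz potential between Sobolev/Lebesgue spaces (from the Appendix) together with the fact that $u_n\to 0$ strongly in $L^p_{\mathrm{loc}}$ for every $p<p^*$ and, crucially, strongly in $H^{s}_{\mathrm{loc}}$ for every $s<k$ (which follows from the bound $\|u_n\|_{H^k}\le C$ plus compact embedding $H^k_{\mathrm{loc}}\hookrightarrow H^{s}_{\mathrm{loc}}$): since every factor $R_{s_{j,2}}|\mathcal{L}_{\tau_2}u_n|$ involves a derivative of $u_n$ of order strictly below $k$, each such term is $o(1)$. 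The term with $\beta\,\LH\beta$ (and lower-order pieces) pairs a bounded smooth coefficient against $u_n\otimes u_n$ in a norm strictly weaker than $H^k$, hence is also $o(1)$ by the local strong convergence. Collecting everything yields
\[
\|\beta u_n\|_{H^k_0(B_{2r})}^2 \big(1 - C C_S\,\e_0^{(p^*-2)/p^*}\big) \le o(1),
\]
so the smallness of $\e_0$ forces $\|\beta u_n\|_{H^k_0}\to 0$, and in particular $u_n\to 0$ in $H^k(B_r(\zeta_0))$, after transferring back to the sphere via the norm equivalence induced by the Cayley transform.

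\smallskip
\noindent
The main obstacle I anticipate is making the localization rigorous in the nonlocal setting: unlike the local case, $\beta u_n$ is not compactly "the same" as $u_n$ near $\zeta_0$, and the tail of the Riesz potentials reaches outside $B_{2r}$, so one must carefully track which norms of $u_n$ appear — one should only ever see $u_n$ in $L^p$ with $p<p^*$ or in $H^s$ with $s<k$ globally (using the global bound and weak convergence to $0$) never $\|u_n\|_{H^k}$ or $\|u_n\|_{L^{p^*}}$ on a fixed ball with a constant that is not small. Getting the bookkeeping of the exponents $s_{j,1},s_{j,2},\tau_1,\tau_2$ in the commutator lemma to line up so that every error genuinely gains a little bit of compactness (the condition $\tau_1+\tau_2-s_{j,1}-s_{j,2}-2k\in[0,\epsilon)$ with $s_{j,i}>0$) is the delicate technical point, and is presumably where the hypothesis $k\in(\max\{0,2k-1\}/2,\dots)$-type restrictions on the admissible $\tau$'s in the commutator lemma get used.
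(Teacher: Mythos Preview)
Your overall architecture matches the paper's: localize with a cut-off, expand $\LSph(\beta u_n)$ via the Cayley transform and the $3$-commutator $H^{\LH}$, show the commutator and zero-order errors are $o(1)$ in $H^{-k}$, and then absorb the main nonlinear term using $\int_{B_{2r}}|u_n|^{p^*}<\e$ together with H\"older/Sobolev. The paper frames this as a regularity estimate $\|\eta u_n\|_{H^k}\lesssim\|\LSph(\eta u_n)\|_{H^{-k}}$ rather than as testing against $\beta^2 u_n$, but those are dual and lead to the same inequality.

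There is, however, a genuine gap in how you dispose of the commutator term. You claim that ``every factor $R_{s_{j,2}}|\mathcal{L}_{\tau_2}u_n|$ involves a derivative of $u_n$ of order strictly below $k$'' and hence is $o(1)$ by the compact embedding $H^k_{\mathrm{loc}}\hookrightarrow H^s_{\mathrm{loc}}$, $s<k$. The commutator lemma does not give this: it forces $\tau_2\in(\max\{0,2k-1\},2k]$ and $s_{j,2}\in(0,\tau_2)$, with only the constraint $s_{j,1}+s_{j,2}\le \tau_1+\tau_2-2k$. In particular $s_{j,2}$ may be small, so the effective order $\tau_2-s_{j,2}$ landing on $u_n$ can be arbitrarily close to $2k$, and for $k\ge1$ one always has $\tau_2>2k-1\ge k$, so $\mathcal{L}_{\tau_2}u_n$ lies only in a negative Sobolev space and the pointwise quantity $|\mathcal{L}_{\tau_2}u_n|$ need not even make sense. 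Your compactness argument as written does not close.

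The paper's remedy is exactly the missing idea: inside the commutator estimate one substitutes the (PS) relation $\LH\big(\Jacc^{1/p^*}u_n\circ\Cay\big)=\tilde u_n+\delta_n$ with $\tilde u_n=|\,\cdot\,|^{p^*-2}(\,\cdot\,)\in L^{\bar p}$ bounded and $\delta_n\to0$ in $H^{-k}$. This replaces the problematic $|\LH u_n|$ by an honest $L^{\bar p}$ function; then, since $\tilde u_n\rightharpoonup 0$ in $L^{\bar p}$, the Riesz potential $R_t(\tilde u_n)$ converges strongly to $0$ in $L^q_{\mathrm{loc}}$ for every $q$ with $\tfrac{1}{q}>\tfrac{1}{\bar p}-\tfrac{t}{Q}$. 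A near/far splitting (inside $B_R^0$ use this strong convergence; outside $B_R^0$ use the decay $R_s(\LH(\eta\circ\Cay))\lesssim|x|^{-(Q+2k-s)}$ against the uniform $L^{\bar p}$ bound) then shows each term of the commutator is $o(1)$ in $L^{\bar p}\hookrightarrow H^{-k}$. The zero-order error $u_n\,\LH(\eta\circ\Cay)$ is killed by $\|u_n\|_{L^2}\to0$. Once you insert this step, the rest of your outline goes through and coincides with the paper's proof.
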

\begin{proof}
Suppose by contradiction that for every $\e>0$ there exists $\zeta_0\notin\Sigma_{\e}$ such that $u_n$ does not converges to $0$ in $H^k\left(B_r(\zeta_0)\right)$ for every $r>0$. Notice that $\zeta_0\notin\Sigma_{\e}$ implies the existence of a radius $r>0$ such that \begin{equation}\label{eq: int Br un <e}
\int_{B_{2r}(\zeta_0)}|u_n|^{p^*}\dSph<\e.
\end{equation}
Since $u_n$ is a (PS) sequence, there exists a sequence $\delta_n\in \mSobSph$ converging to $0$ in $\mSobSph$, such that
\begin{equation*}
\LSph(u_n)=|u_n|^{p^*-2}u_n+\delta_n.
\end{equation*}
Since we want to localize around $\zeta_{0},$ we consider the Cayley transform $\Cay$ where we set the north pole as $\zeta_0$. We notice that for any couple of functions $u,v$ it holds
$$
\LSph(uv)=\Jacc^{-\frac{1}{\bar p}}[\LH(\Jacc^{\frac{1}{p^{*}}}(uv)\circ \Cay)]\circ \Cay^{-1}
$$
and
$$\LH(\Jacc^{\frac{1}{p^{*}}}(uv)\circ \Cay)=v\circ \Cay\LH(\Jacc^{\frac{1}{p^{*}}}u\circ \Cay)+\Jacc^{\frac{1}{p^{*}}}u\circ \Cay\LH(v\circ \Cay)+H^{\LH}(\Jacc^{\frac{1}{p^{*}}}u\circ \Cay,v\circ \Cay).$$
We go back now to our Palais-Smale sequence and we compute
\begin{align*}
\LSph(\eta u_n)&=\eta\LSph( u_n)+\text{l.o.t.}\\
&=\eta|u_n|^{p^*-2}u_n+\eta\delta_n+\text{l.o.t}.
\end{align*}
Here $\eta$ is a  smooth cut off function  with $\textnormal{supp}(\eta)\subseteq B_{2r}(\zeta_0)$ and $\eta\equiv 1$ on $B_r(\zeta_0)$. So we first estimate $\|\text{l.o.t}\|_{H^{-k}}$. We have
\begin{align}\label{eq: splitting}
|H_{k}(\Jacc^{\frac{1}{p^{*}}}u_{n}\circ \Cay,\eta\circ \Cay)|&\lesssim \sum_{i=1}^{L}R_{2k-s_{i}-t_{i}}\Big(R_{t
_{i}}(\LH(\Jacc^{\frac{1}{p^{*}}}u_{n}\circ \Cay))R_{s_{i}}(\LH(\eta\circ \Cay))\Big)\notag\\
&\lesssim \sum_{i=1}^{L}R_{2k-s_{i}-t_{i}}\Big(R_{t_{i}}((\tilde{u}_{n}+\delta_{n}) \circ \Cay)R_{s_{i}}(\LH(\eta\circ \Cay))\Big)
\end{align}
where $\tilde{u}_{n}=|\Jacc^{\frac{1}{p^{*}}}u_{n}|^{p^{*}-2}\Jacc^{\frac{1}{p^{*}}}u_{n}$. Since the terms of the sumation above are similar in nature we will give here the proof for a single term. Since $\tilde{u}_{n}$ is bounded in $L^{\bar p}$, we have that $R_{t}(\tilde{u}_{n}\circ \Cay)$ converges strongly to zero (up to a subsequence) in $L^{q}_{loc}$ for $\frac{1}{q}>\frac{1}{\bar p}-\frac{t}{Q}$. So we fix $R>0$ big enough. Then we have
$$\left\|R_{2k-s-t}\Big(R_{t}(\tilde{u}_{n}\circ \Cay)\chi_{B_{R}^0}R_{s}(\LH(\eta\circ \Cay))\Big)\right\|_{L^{\bar p}}\lesssim\|R_{t}(\tilde{u}_{n}\circ \Cay)\|_{L^{q}(B_{R}^0)}\|\LH(\eta\circ \Cay)\|_{L^{p}},$$
where $\frac{1}{\bar p}=\frac{1}{q}+\frac{1}{p}-\frac{2k-t}{Q}$. Hence
$$\left\|R_{2k-s-t}\Big(R_{t}(\tilde{u}_{n}\circ \Cay)\chi_{B_{R}^0}R_{s}(\LH(\eta\circ \Cay))\Big)\right\|_{L^{\bar p}}\to 0.$$
Outside $B_{R}^0$, we have that
$$\chi_{\mathbb{H}^{N}\setminus B_{R}^0}R_{s}(\LH(\eta\circ \Cay))(x)\lesssim \frac{1}{|x|^{Q+2k-s}}.$$
Thus,
$$\left\|R_{2k-s-t}\Big(R_{t}(\tilde{u}_{n}\circ \Cay)\chi_{\mathbb{H}^{N}\setminus B_{R}^0}R_{s}(\LH(\eta\circ \Cay))\Big)\right\|_{L^{\bar p}}\lesssim\|\tilde{u}_{n}\circ \Cay\|_{L^{\bar p}}\frac{1}{R^{Q+2k-s-t}}$$
and since
$$\|\tilde{u}_{n}\circ \Cay\|_{L^{\bar p}}\lesssim \|u_{n}\|_{L^{p^{*}}}^{p^{*}-1}\leq C,$$
by letting first $n\to \infty$ then $R\to \infty$ we get
$$\left\|\Jacc^{\frac{1}{\bar p}}H_{k}^{\LH}(\Jacc^{\frac{1}{p^{*}}}u_{n}\circ \Cay,\eta\circ \Cay)\circ \Cay^{-1}\right\|_{H^{-k}}=o(1).$$
Now we move to estimating the term $\Jacc^{\frac{1}{p^{*}}-\frac{1}{\bar p}}u_{n} \LH(\eta\circ\Cay)\circ \Cay^{-1}$. Indeed, we have
$$\|\Jacc^{\frac{1}{p^{*}}}u_{n} \LH(\eta\circ\Cay)\circ \Cay^{-1}\|_{L^{\bar p}}\lesssim \|u_{n}\|_{L^{2}}\|\Jacc^{\frac{1}{p^{*}}}\LH(\eta\circ\Cay)\circ \Cay^{-1}\|_{L^{\frac{Q}{k}}}$$
and since $u_{n}\to 0$ in $L^{2}$ we have that
$$\|\Jacc^{\frac{1}{p^{*}}}u_{n} \LH(\eta\circ\Cay)\circ \Cay^{-1}\|_{L^{\bar p}}=o(1).$$
Therefore, we have that $\|\text{l.o.t}\|_{H^{-k}}=o(1)$.
By the sub-elliptic regularity estimates we find
\begin{gather}
\begin{split}
\label{eq: sobnorm eta un}
\SobNormB{\eta|u_n|}{k}{B_r(\zeta_0)}&\lesssim \SobNormB{\eta|u_n|^{p^*-2}u_n+\eta\delta_n}{-k}{B_r(\zeta_0)}+\|\text{l.o.t}\|_{H^{-k}(B_{r}(\zeta_{0}))}\\
&\lesssim  \SobNormB{\eta|u_n|^{p^*-2}u_n}{-k}{B_r(\zeta_0)}+\SobNormB{\eta\delta_n}{-k}{B_r(\zeta_0)}+o(1).
\end{split}
\end{gather}
We estimate the first term in the inequality above as follows
\begin{align*}
\SobNormB{\eta|u_n|^{p^*-2}u_n}{-k}{B_r(\zeta_0)}&\lesssim \LpNormB{\eta|u_n|^{p^*-2}u_n}{\frac{2Q}{Q+2k}}{B_r(\zeta_0)}\\
&\lesssim \LpNormB{u_n}{p^*}{B_r(\zeta_0)}^{\frac{4k}{Q-2k}}\LpNormB{\eta u_n}{p^*}{B_r(\zeta_0)}\\
&\lesssim\LpNormB{u_n}{p^*}{B_r(\zeta_0)}^{\frac{4k}{Q-2k}}\SobNormB{\eta u_n}{k}{B_r(\zeta_0)}.
\end{align*}
Substituting the estimates above in \eqref{eq: sobnorm eta un} and using \eqref{eq: int Br un <e}, we find
\begin{align*}
\SobNormB{\eta u_n}{k}{B_r(\zeta_0)}&\lesssim \LpNormB{u_n}{p^*}{B_r(\zeta_0)}^{\frac{4k}{Q-2k}}\SobNormB{\eta u_n}{k}{B_r(\zeta_0)}+o(1)\\
&\lesssim\e^{\frac{2k}{Q}}\SobNormB{\eta u_n}{k}{B_r(\zeta_0)}+o(1).
\end{align*}
Now, we choose $\e$ small enough to have $\SobNormB{\eta u_n}{k}{B_r(\zeta_0)}\to 0$, leading to a contradiction to our assumptions.
\end{proof}

\noindent
Given $r>0$, We can define now the concentration function
\begin{equation*}
Q_n(r)=\sup_{\zeta\in\Sph}\int_{B_r(\zeta)}|u_n|^{p^*}\dSph.
\end{equation*}
Since we are assuming that $u_n$ does not satisfies the (PS) condition, the Lemma above ensures the existence of a small enough $\e_0>0$ such that $\Sigma_{\e_0}\neq\emptyset $. Thus, for any fixed $\frac{\e_0}{3}>\e>0$, there exist a sequence of points $\zeta_n\in\Sph$ and a sequence of radii $R_n\to 0$ such that
\begin{equation}\label{eq: concentration fct = eps}
Q_n(R_n)=\int_{B_{R_n}(\zeta_n)}|u_n|^{p^*}\dSph=\e.
\end{equation}
Up to a subsequence, we can assume that $\zeta_n\to\zeta_0\in\Sph$ as $n\to \infty$. Again, we fix a coordinate system in $\C^{N+1}$ so that $\zeta_0=(1,0,\dots,0)$ and denote by $-\zeta_0=(-1,\dots,0,0)$ the antipodal point of $\zeta_0$. We set
\begin{equation*}
\Omega=\Cay^{-1}(B_1(\zeta_0))\subset\HN.
\end{equation*}
Clearly, for $n$ big all the balls $\Cay^{-1}(B_{R_n}(\zeta_n))$ will be contained in $\Omega$. Hence, by means of the map $\Cay^{-1}$, the problem of characterizing (PS) sequences can be studied in $\HN$, where the points $w_n=\Cay^{-1}(\zeta_n)$ accumulate at the point $0=\Cay^{-1}(\zeta_0)$ in the interior of the domain $\Omega$. Also, we define the map
\begin{equation*}
\rho_n:\HN\to \Sph\setminus\{-\zeta_0\},\qquad \rho_n(w)=\Cay\circ \tau_{w_n}\circ\delta_{R_n}(w)
\end{equation*}
and the functions
\begin{equation*}
U_n=\Jacr^{\frac{Q-2k}{2Q}}u_n\circ\rho_n,
\end{equation*}
here  $\Jacr$ is twice the absolute value of the Jacobian determinant of the map $\rho_n$. From now on we denote the preimage of a ball $B_{RR_n}(\zeta_n)\subset\Sph$ with respect to the function $\rho_n$ by
\begin{align*}
\set_R^n&=\rho_n^{-1}\left(B_{RR_n}(\zeta_n)\right).
\end{align*}
Notice that, for $n$ big, we can always assume $\zeta_n\in B_{\frac{1}{2}}(\zeta_0)$, hence $\set_R^n$ is well defined  and $\Cay^{-1}(B_{RR_n}(\zeta_n))\subset\Omega$ for every $RR_n<\frac{1}{2}$. Recalling the relation between $\LH$ and $\LSph$ expressed in \eqref{eq: relationship LH LSph}, we have
\begin{equation}\label{eq: int Up*=int up*}
\begin{split}
\int_{\set_R^n} U_n\LH U_n\dH&=\int_{B_{RR_n}(\zeta_n)} u_n\LSph u_n\dSph,\\
\int_{\set_R^n} |U_n|^{p^*}\dH&=\int_{B_{RR_n}(\zeta_n)} |u_n|^{p^*}\dSph.
\end{split}
\end{equation}
In the sequel we will make use of the following relation obtained from inclusions \eqref{eq: inclusions B and preimages}
\begin{equation*}
\set_R^n=\rho_n^{-1}(B_{R_nR}(\zeta_n))\supset \delta_{R_n^{-1}}\circ\tau_{w_n^{-1}}\left(B^{w_n}_{\frac{R_nR}{2}}\right)=B_{\frac{R}{2}}^0.
\end{equation*}
Also, we will use the notation
\begin{equation*}
\set_R^0=\Cay^{-1}(B_{R}(\zeta_0)).
\end{equation*}

\begin{lemma}\label{lem: Fn to 0}
Let us set $F_n=\LH U_n-|U_n|^{p^*-2}U_n$, then for every $R>0$
\begin{equation*}
\sup\left\{\la F_n,F\ra_{H^{-k},H^k}:\; \textnormal{supp}(F)\subset B_{R}^0,\; F\in H^k_0(B_R^0),\quad \SobNorm{F}{k}\leq 1\right\}\to 0
\end{equation*}
i.e.
\begin{equation*}
F_n\to 0\quad\text{in } \mlocSobH.
\end{equation*}
\end{lemma}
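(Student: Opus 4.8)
The plan is to transfer the equation $\LSph u_n = |u_n|^{p^*-2}u_n + \delta_n$ on $\Sph$ to $\HN$ via the map $\rho_n$, and then to test $F_n$ against functions supported in a fixed ball $B_R^0$. The key point is the conformal covariance relation \eqref{eq: relationship LH LSph} together with the way the weights $\Jacr^{(Q\pm 2k)/2Q}$ are tailored so that $\LH(\Jacr^{(Q-2k)/2Q}\,w\circ\rho_n) = \Jacr^{(Q+2k)/2Q}(\LSph w)\circ\rho_n$; applying this with $w=u_n$ gives $\LH U_n = \Jacr^{(Q+2k)/2Q}(|u_n|^{p^*-2}u_n + \delta_n)\circ\rho_n = |U_n|^{p^*-2}U_n + \Jacr^{(Q+2k)/2Q}\delta_n\circ\rho_n$, so that $F_n = \Jacr^{(Q+2k)/2Q}\,\delta_n\circ\rho_n$. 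Thus the whole statement reduces to showing that this pulled-back error term tends to zero in $H^{-k}_{\mathrm{loc}}(\HN)$.

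Next I would estimate $\la F_n, F\ra$ for $F\in H^k_0(B_R^0)$ with $\SobNorm{F}{k}\le 1$. Writing out the pairing and changing variables back to $\Sph$ under $\rho_n$, one finds $\la F_n, F\ra_{H^{-k},H^k} = \la \delta_n, \Jacr^{(Q-2k)/2Q}\,F\circ\rho_n^{-1}\ra$, where the function $G_n := \Jacr^{(Q-2k)/2Q}\,F\circ\rho_n^{-1}$ is supported in $B_{RR_n}(\zeta_n)\subset\Sph$ (using $\set_R^n \supset B_{R/2}^0$, i.e. the support of $F$ lands inside $\set_R^n$ for $n$ large, after possibly enlarging $R$ harmlessly). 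The crucial observation is that $G_n$ has bounded $\SobSph$-norm: by the conformal invariance of the quadratic form associated with $\LSph$ (again \eqref{eq: relationship LH LSph}, or equivalently \eqref{eq: intertwining}), $\SobNormB{G_n}{k}{\Sph} = \SobNorm{F}{k}\le 1$. Hence
\begin{equation*}
\left|\la F_n, F\ra\right| = \left|\la \delta_n, G_n\ra\right| \le \|\delta_n\|_{H^{-k}(\Sph)}\,\SobNormB{G_n}{k}{\Sph} \le \|\delta_n\|_{H^{-k}(\Sph)} = o(1),
\end{equation*}
uniformly over all admissible $F$, because $\delta_n\to 0$ in $\mSobSph$ by hypothesis (that is exactly the statement that $u_n$ is a (PS) sequence). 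Taking the supremum over $F$ gives the claimed convergence.

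The main obstacle — and the only place where real care is needed — is making the change of variables rigorous: one must check that the conformal weight $\Jacr^{(Q-2k)/2Q}$ is precisely the factor that turns the $H^k$-pairing on $\HN$ into the $H^k$-pairing on $\Sph$ (this is where the exponents $\frac{Q-2k}{2Q}$ and $\frac{Q+2k}{2Q}$ must match up exactly, and where one uses that $\Jacr$ is twice the absolute value of the Jacobian of $\rho_n$), and that the support of the test function is correctly tracked through $\rho_n$, so that $G_n$ genuinely lives on $\Sph$ with no loss coming from the excised point $-\zeta_0$ — this is guaranteed since $\mathrm{supp}(F)\subset B_R^0$ is a fixed compact set in $\HN$ whose image under $\rho_n$ shrinks to $\zeta_0$. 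Once these identifications are in place, the estimate is immediate from the decay of $\delta_n$ and requires no regularity theory at all; the commutator machinery of Lemma (commutator estimates) is not needed here (it enters only in the localized estimates of Lemma \ref{lem: concentration} and later).
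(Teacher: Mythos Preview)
Your proposal is correct and follows essentially the same route as the paper: identify $F_n=\Jacr^{\frac{Q+2k}{2Q}}\delta_n\circ\rho_n$ via the conformal covariance relation, change variables back to $\Sph$, observe that the resulting test function has $H^k(\Sph)$-norm bounded by $\SobNorm{F}{k}\le 1$ thanks to the invariance of the quadratic form, and conclude from $\delta_n\to 0$ in $\mSobSph$. The only slip is the weight on $G_n$: with your conventions it should be $G_n=(\Jacr^{-\frac{Q-2k}{2Q}}F)\circ\sigma_n$ (equivalently $\Jacs^{\frac{Q-2k}{2Q}}F\circ\sigma_n$), not $\Jacr^{\frac{Q-2k}{2Q}}$, since the change of variables absorbs one full power of $\Jacr$; with the correct sign the norm identity $\SobNormB{G_n}{k}{\Sph}=\SobNorm{F}{k}$ is exactly what the paper computes.
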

\begin{proof}
Let us consider $n$ big enough  to have $(6R_n)^{-1}\geq R$, and  $F\in H^k_0(B_R^0)$ such that $\textnormal{supp}(F)\subset B_R^0$ and $\SobNorm{F}{k}\leq 1$. We have
\begin{align*}
\la F_n,F\ra_{H^{-k},H^k}&=\int_{B_{(6R_n)^{-1}}^0}F\left(\LH U_n-|U_n|^{p^*-2}U_n\right)\dH\\
&\leq \int_{\set_{(3R_n)^{-1}}^n}F\left(\LH U_n-|U_n|^{p^*-2}U_n\right)\dH\\
&=\int_{\set_{(3R_n)^{-1}}^n}\Jacr^{\frac{Q+2k}{2Q}}F\left(\LSph u_n-|u_n|^{p^*-2}u_n\right)\circ\rho_n\dH\\
&=\int_{B_{\frac{1}{3}}(\zeta_n)}\left(\Jacr^{-\frac{Q-2k}{2Q}}F\right)\circ \sigma_n \left(\LSph u_n-|u_n|^{p^*-2}u_n\right)\dSph \;.
\end{align*}
On the other hand, recalling \eqref{eq: relationship LH LSph}, we find
\begin{align*}
\SobNorm{\left(\Jacr^{-\frac{Q-2k}{2Q}}F\right)\circ \sigma_n }{k}&=\int_{\Sph} \Jacr^{-1} (F\LH F)\circ \sigma_n \dSph\\
&=\int_{\HN} F\LH F\dH\leq C,
\end{align*}
thus $\la F_n,F\ra_{H^{-k},H^k}\to 0$.
\end{proof}

\begin{lemma}\label{lem: Un to Uinfty}
For $\e>0$ small enough in \eqref{eq: concentration fct = eps}, there exists $U_{\infty}\in D^k(\HN)$ such that $
U_n\to U_{\infty}$ in $\locSobH$ and
$$\LH U_{\infty}=|U_{\infty}|^{p^*-2}U_{\infty}\quad\text{on }\HN.$$
\end{lemma}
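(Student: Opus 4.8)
The goal is to show that the rescaled sequence $U_n = \Jacr^{\frac{Q-2k}{2Q}} u_n\circ\rho_n$ converges, locally in $\locSobH$, to a nontrivial solution $U_\infty$ of the limit equation on all of $\HN$. The strategy is the standard blow-up/concentration-compactness scheme, now executed in the Heisenberg setting with the fractional operator $\LH$, using the ingredients already assembled: the identities \eqref{eq: int Up*=int up*} relating the $\HN$ and $\Sph$ energies, the inclusion $\set_R^n\supset B_{R/2}^0$, Lemma \ref{lem: Fn to 0} (which says $F_n=\LH U_n-|U_n|^{p^*-2}U_n\to 0$ in $\mlocSobH$), and the normalization \eqref{eq: concentration fct = eps} forcing the mass $\e$ not to escape.

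\textbf{Step 1: uniform local bounds and weak limit.} First I would show that $U_n$ is bounded in $H^k_0(B_R^0)$ for every fixed $R>0$. Using \eqref{eq: int Up*=int up*}, the $H^k$-energy of $U_n$ over $\set_R^n$ equals the energy of $u_n$ over $B_{RR_n}(\zeta_n)$, which is bounded by $\SobNorm{u_n}{k}^2\leq C$ from Lemma \ref{lem: boundness of PS sequences}; since $\set_R^n\supset B_{R/2}^0$, this controls $U_n$ on $B_{R/2}^0$. By a diagonal argument over $R\to\infty$, up to a subsequence $U_n\rightharpoonup U_\infty$ weakly in $\locSobH$ and strongly in $L^p_{\mathrm{loc}}(\HN)$ for $p<p^*$, with $U_\infty\in D^k(\HN)$ (the global $L^{p^*}$ bound coming again from \eqref{eq: int Up*=int up*} and the Sobolev inequality). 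Passing to the limit in the weak formulation $\la F_n,F\ra\to 0$ from Lemma \ref{lem: Fn to 0}, and using local strong $L^{p^*-1}$-type convergence of the nonlinearity (Rellich in the subcritical range after interpolating), one gets $\LH U_\infty=|U_\infty|^{p^*-2}U_\infty$ weakly, hence on all of $\HN$.

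\textbf{Step 2: nontriviality.} This is where the normalization is used. Rescaling \eqref{eq: concentration fct = eps} via \eqref{eq: int Up*=int up*} gives $\int_{\set_1^n}|U_n|^{p^*}\dH=\e$, while the concentration-function identity $Q_n(R_n)=\e$ means that on every ball $B_{R_n}(\zeta)$ the mass of $u_n$ is at most $\e$; rescaled, this says $\sup_{w\in\HN}\int_{B_1^w\cap(\text{domain})}|U_n|^{p^*}\dH\leq\e$. The combination of a fixed positive lower bound of mass in $\set_1^n$ and a small uniform upper bound on unit balls is exactly the hypothesis preventing both vanishing and dichotomy; together with Lemma \ref{lem: concentration} (applied to the rescaled sequence, which is again a Palais-Smale-type sequence for $E_\H$ localized) it forces $U_n$ to actually converge strongly in $\locSobH$ and its limit $U_\infty$ to carry positive $L^{p^*}$-mass. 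Concretely: strong local convergence upgrades $\int_{\set_1^n}|U_n|^{p^*}\to\int|U_\infty|^{p^*}$ over any fixed ball, which cannot vanish since otherwise the local $H^k$-norm would vanish (Lemma \ref{lem: concentration}) contradicting that the mass $\e$ concentrates at scale exactly $R_n$ near $\zeta_n$, i.e. at scale $O(1)$ for $U_n$.

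\textbf{Step 3: strong local convergence.} To promote weak to strong $\locSobH$ convergence one writes, for a cutoff $\chi$ supported in $B_R^0$, the quantity $\la \LH(U_n-U_\infty),\chi^2(U_n-U_\infty)\ra$ and splits it using the commutator Lemma (commutator estimates) to move $\chi^2$ across $\LH$; the commutator terms are lower order and vanish as in Lemma \ref{lem: concentration}, the linear term is $\la F_n,\chi^2(U_n-U_\infty)\ra + \la |U_n|^{p^*-2}U_n-|U_\infty|^{p^*-2}U_\infty,\chi^2(U_n-U_\infty)\ra$, the first piece $\to 0$ by Lemma \ref{lem: Fn to 0}, and the second is controlled by the small local $L^{p^*}$-mass $\e^{2k/Q}$ times the local $H^k$-norm of $U_n-U_\infty$ — exactly the absorption argument at the end of Lemma \ref{lem: concentration}. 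Choosing $\e$ small enough in \eqref{eq: concentration fct = eps} makes the absorption work and yields $U_n\to U_\infty$ in $H^k(B_R^0)$ for every $R$.

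\textbf{Main obstacle.} The delicate point is the strong convergence argument in Step 3 in the non-local, degenerate setting: one cannot simply multiply the equation by $\chi^2(U_n-U_\infty)$ because $\LH$ does not commute with multiplication, so the whole argument rests on the pointwise commutator estimates (Lemma (commutator estimates)) and on propagating the smallness of the local critical mass $\e$ through fractional Riesz-potential bounds, exactly as in Lemma \ref{lem: concentration}. Controlling the tails (the region outside a large ball $B_R^0$, where $\LH(\eta\circ\Cay)$ decays like $|x|^{-(Q+2k-s)}$) uniformly in $n$ while simultaneously keeping the constants independent of the scaling $R_n$ is the technically heaviest part; everything else is a fairly routine adaptation of the Euclidean concentration-compactness blueprint.
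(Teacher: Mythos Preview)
Your proposal is correct and follows essentially the same route as the paper: extract a weak local limit $U_\infty$, verify it solves the equation via Lemma~\ref{lem: Fn to 0}, and then upgrade to strong local $H^k$-convergence by a cutoff argument combining the commutator estimates with the smallness of $\e$ for absorption. The paper's execution differs only cosmetically: it first subtracts $U_\infty$ (so one may assume $U_\infty=0$) and then controls $\SobNorm{\beta U_n}{k}$ through the regularity bound $\SobNorm{\beta U_n}{k}\lesssim\SobNorm{\LH(\beta U_n)}{-k}+\LpNorm{\beta U_n}{2}$ rather than pairing against $\chi^2(U_n-U_\infty)$; also, your Step~2 on nontriviality is not part of the lemma itself---the paper derives $U_\infty\neq 0$ immediately after the proof as a direct consequence of the strong convergence and \eqref{eq: concentration fct = eps}.
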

\begin{proof}
The sequence $U_n$ is bounded in $\locSobH$, hence there exists $U_{\infty}$ such that, up to subsequence, $U_n\rightharpoonup U_{\infty}$ weakly in in $\locSobH$ and $U_n\to U_{\infty}$ strongly in $L^p_{\textnormal{loc}}(\HN)$ for $1\leq p<p^*$. From \eqref{eq: int Up*=int up*} we deduce
\begin{equation}\label{eq: limsup int Uinfty <infty}
\limsup_{n\to \infty} \int_{\set_R^n}|U_n|^{p^*}\dH\leq \sup_{n\in\N}\int_{\Sph} |u_n|^{p^*}\dSph<\infty
\end{equation}
so that  $U_{\infty}\in L^{p^*}(\HN)$. Moreover, by the same argument given after the proof of Lemma \ref{lem: boundness of PS sequences}, we have that $U_{\infty}$ satisfies \eqref{eq: problem on H}, hence
\begin{equation}
\int_{\HN}U_{\infty}\LH U_{\infty}\dH<\infty.
\end{equation}
It follows that $U_{\infty}\in D^{k}(\HN)$. \\
In virtue of Lemma \ref{lem: the energy splits}, we replace $U_n$ by $U_n-U_{\infty}$ so that, from now to the end of the proof, we can assume $U_{\infty}=0$. By \eqref{eq: concentration fct = eps} we have
\begin{equation}\label{eq: concentration fct Hn=Sph=eps}
\int_{\set_{1}^n}|U_n|^{p^*}\dH=\int_{B_{R_n}(\zeta_n)}|u_n|^{p^*}\dSph=\e.
\end{equation}
Let $\beta\in C_0^{\infty}$, such that $\text{supp}(\beta)\subset \set_{1}^{n}$ then
\begin{equation}\label{eq: estimate beta2Un}
\begin{split}
\SobNorm{\beta U_n}{k}&\lesssim \SobNorm{\LH(\beta U_n)}{-k}+\LpNorm{\beta U_n}{2}\;.
\end{split}
\end{equation}
Again, we use the fact that
\begin{align*}
\LH(\beta U_n)&=\beta \LH(U_{n})+U_{n}\LH(\beta )+H^{\LH}(U_{n},\beta )\notag\\
&=\beta \LH(U_{n})+\text{l.o.t.}
\end{align*}
So first, we have that
$$U_{n}\LH(\beta)=U_{n}\chi_{B_{R}^0}\LH(\beta )+U_{n}\chi_{\mathbb{H}^{N}\setminus B_{R}^0}\LH(\beta ) \;.$$
Therefore,
\begin{align*}
\|U_{n}\LH(\beta )\|_{L^{\bar p}}&\lesssim \|U_{n}\|_{L^{2}(B_{R}^0)}\|\LH(\beta )\|_{L^{\frac{Q}{k}}}+\|U_{n}\|_{L^{p^{*}}}\|\chi_{\mathbb{H}^{N}\setminus B_{R}^0}\LH(\beta )\|_{L^{\frac{Q}{2k}}}\notag\\
&\lesssim \|U_{n}\|_{L^{2}(B_{R}^0)}+\frac{1}{R^{Q}}\|U_{n}\|_{L^{p^{*}}} \;.
\end{align*}
Since $U_{n}\to 0$ in $L^{2}_{loc}$ and $\|U_{n}\|_{L^{p^{*}}}$ is bounded, if we let $n\to \infty$ and then $R\to \infty$, we have that $\|U_{n}\LH(\beta )\|_{L^{\bar p}}=o(1)$. Next, we move to the term $H^{\LH}(\beta ,U_{n})$. Again, we have that from Lemma (commutator estimates),
$$|H^{\LH}(\beta ,U_{n})|(x)\lesssim \sum_{j=1}^{L}R_{2k-s_{j,1}-s_{j,2}}\Big(R_{s_{j,1}}|\LH (U_{n})|R_{s_{j,2}}|\LH(\beta )|\Big)(x) \;.$$
So we consider one term of the form $R_{2k-s-t}\Big(R_{t}|\LH(U_{n})|R_{s}|\LH(\beta )|\Big)(x)$. Using the same splitting as in \eqref{eq: splitting}, we have that
$$\|H^{\LH}(\beta ,U_{n})\|_{H^{-k}}=o(1),$$
and thus $\|\text{l.o.t}\|_{H^{-k}}=o(1)$. Clearly $\LpNorm{\beta  U_n}{2}\to 0$, and by Lemma \ref{lem: Fn to 0} we know that $F_n\to 0$ in $\mlocSobH$, hence, we have
\begin{align*}
\SobNorm{\LH(\beta U_n)}{-k}&\leq \SobNorm{\beta \LH U_n+\text{l.o.t.}}{-k}\\
&\leq \SobNorm{\beta \left(|U_n|^{p^*-2}U_n+F_n\right)}{-k}+o(1)\\
&\leq  \SobNorm{\beta |U_n|^{p^*-2}U_n}{-k}+o(1).
\end{align*}
Therefore
\begin{equation*}
\SobNorm{\beta  U_n}{k}\lesssim\SobNorm{\beta|U_n|^{p^*-2}U_n}{-k}+o(1)
\end{equation*}
and

\begin{align*}
\SobNorm{\beta  U_n}{k}&\lesssim\left\| \beta |U_n|^{p^*-2}U_n \right\|_{L^{\bar{p}}(\set_1^n)}+o(1)\\
&\lesssim\left(\int_{\set_1^{n}}|U_n|^{p^*}\dH\right)^{\frac{2k}{Q}}\LpNormB{\beta U_n}{p^*}{\set_1^n}+o(1)\\
&\lesssim\epsilon^{\frac{2k}{Q}}\LpNormB{\beta U_n}{p^*}{\set_1^n}+o(1)\\
&=o(1)
\end{align*}
as $n\to\infty.$
\end{proof}

\noindent
From the Lemma above and \eqref{eq: concentration fct Hn=Sph=eps}, it follows
\begin{equation*}
\int_{\set_1^0} |U_{\infty}|^{p^*}\dH=\e,
\end{equation*}
hence $U_{\infty}\neq 0$ is a solution to \eqref{eq: problem on H}. We consider a cut off function $\gamma$ such that $\gamma\equiv 1$ on $B_{\frac{1}{4}}^0$, $\textnormal{supp}(\gamma)\subset B_{\frac{1}{2}}^0, $ and we define $\beta=\gamma \circ \Cay^{-1}$. In virtue of the inclusions
\begin{equation*}
\Cay^{-1}(B_{\frac{1}{4}}(\zeta_0))\subseteq B_{\frac{1}{4}}^0\subseteq \Cay^{-1}(B_{\frac{1}{2}}(\zeta_0))\subseteq  B_{\frac{1}{2}}^0,
\end{equation*}
the function $\beta$ is a cut off function such that $\beta\equiv 1$ on $B_{\frac{1}{4}}(\zeta_0)$ and $\textnormal{supp}(\beta)\subset B_1(\zeta_0),$ moreover for $n$ big enough we have
\begin{gather*}
\textnormal{supp}(\beta\circ\rho_n)=\textnormal{supp}(\gamma\circ\tau_{w_n}\circ\delta_{R_n})\subseteq B_{(2R_n)^{-1}}^{w_n}\subseteq B_{R_n^{-1}}^0,\\
\beta\circ\rho_n\equiv 1\quad\text{on } B_{(6R_n)^{-1}}^0.
\end{gather*}
We set
\begin{equation}\label{eq: v_n}
v_n=\Jacs^{\frac{Q-2k}{2Q}}\beta U_{\infty}\circ\sigma_n 
\end{equation}
where $\Jacs$ is half the absolute value of the Jacobian determinant of $\sigma_n$, and  consider
$$\ol{u}_n=u_n-v_n\;. $$
For clarity sake, we recall here the definition of $u_n$ with respect to  $U_n$
\begin{equation*}
u_n=\Jacs^{\frac{Q-2k}{2Q}}U_n\circ\sigma_n.
\end{equation*}
We have then
\begin{lemma}
After taking a subsequence if necessary, we have
\begin{equation*}
\ol{u}_n\rightharpoonup 0\quad\text{weakly in }\SobSph.
\end{equation*}
\end{lemma}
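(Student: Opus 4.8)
The plan is to reduce the statement to showing that $v_n\rightharpoonup 0$ weakly in $\SobSph$: since $u_n\rightharpoonup 0$ is the standing assumption, $\ol u_n=u_n-v_n$ will then converge weakly to $0$ as well. To obtain $v_n\rightharpoonup 0$ I would prove separately that \emph{(a)} $\{v_n\}_{n\in\N}$ is bounded in $\SobSph$, and \emph{(b)} $\int_{\Sph}v_n\varphi\dSph\to 0$ for every $\varphi\in C^\infty(\Sph)$. Since the embedding $\SobSph\hookrightarrow L^2(\Sph)$ is continuous and $C^\infty(\Sph)$ is dense in $\SobSph$, \emph{(a)} and \emph{(b)} together give $v_n\rightharpoonup 0$ in $L^2(\Sph)$; then any weak-$\SobSph$ limit of a subsequence is in particular a weak-$L^2$ limit, hence vanishes, and by \emph{(a)} this yields $v_n\rightharpoonup 0$ weakly in $\SobSph$.

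For step \emph{(a)} I would first rewrite $v_n=\Jacs^{\frac{Q-2k}{2Q}}\big((\beta\circ\rho_n)U_\infty\big)\circ\sigma_n$ (using $\beta\circ\rho_n\circ\sigma_n=\beta$), so that $v_n$ is the conformal pull-back through $\sigma_n$ of the Heisenberg function $W_n:=(\beta\circ\rho_n)U_\infty$. The conformal covariance of $\LSph$ and its relation \eqref{eq: relationship LH LSph} with $\LH$ — the change of variables already used in \eqref{eq: int Up*=int up*} and in the proof of Lemma \ref{lem: Fn to 0} — then give $\SobNorm{v_n}{k}^2=\int_{\HN}W_n\LH W_n\dH\le\LpNorm{\LH W_n}{\bar p}\LpNorm{W_n}{p^*}$. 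Since $|\beta\circ\rho_n|\le 1$ we have $\LpNorm{W_n}{p^*}\le\LpNorm{U_\infty}{p^*}$, so it remains to bound $\LpNorm{\LH W_n}{\bar p}$ uniformly in $n$. By the definition of the $3$-commutator and $\LH U_\infty=|U_\infty|^{p^*-2}U_\infty$ (Lemma \ref{lem: Un to Uinfty}), $\LH W_n=(\beta\circ\rho_n)|U_\infty|^{p^*-2}U_\infty+U_\infty\LH(\beta\circ\rho_n)+H^{\LH}(\beta\circ\rho_n,U_\infty)$; the first term is controlled by $\LpNorm{U_\infty}{p^*}^{p^*-1}$, and the other two by the H\"older inequality together with Lemma (commutator estimates), exactly as in the proofs of Lemmas \ref{lem: concentration} and \ref{lem: Un to Uinfty}. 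The only point specific to this lemma is that $\beta\circ\rho_n=\gamma\circ\tau_{w_n}\circ\delta_{R_n}$, so by left invariance and homogeneity $\LH(\beta\circ\rho_n)=R_n^{2k}(\LH\gamma)\circ\tau_{w_n}\circ\delta_{R_n}$ and the norms of $\LH(\beta\circ\rho_n)$ (and of its Riesz potentials) that enter those estimates are scale invariant, hence bounded independently of $n$. This yields $\LpNorm{\LH W_n}{\bar p}\le C$ and therefore $\SobNorm{v_n}{k}\le C$.

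For step \emph{(b)}, fix $\varphi\in C^\infty(\Sph)$ and change variables $\zeta=\rho_n(w)$ in $\int_{\Sph}v_n\varphi\dSph$; using that $\Jacr$ is twice the Jacobian of $\rho_n$ and $\Jacs(\rho_n(w))=\Jacr(w)^{-1}$, the integral becomes $\tfrac12\int_{\HN}(\varphi\circ\rho_n)\,\Jacr^{\frac{Q+2k}{2Q}}(\beta\circ\rho_n)U_\infty\dH$. Since $\rho_n=\Cay\circ\tau_{w_n}\circ\delta_{R_n}$ one has $\Jacr(w)=R_n^Q\,\Jacc(\tau_{w_n}\circ\delta_{R_n}(w))\le (2R_n)^Q$ on all of $\HN$ (as $\Jacc\le 2^Q$), so $\Jacr^{\frac{Q+2k}{2Q}}\lesssim R_n^{\frac{Q+2k}{2}}$ everywhere, while $\int_{\HN}\Jacr\dH$ is a finite constant independent of $n$, i.e. $\LpNorm{\Jacr^{\frac{Q+2k}{2Q}}}{\bar p}$ is bounded in $n$. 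Given $\e>0$ I would pick $R>0$ with $\big\|U_\infty\big\|_{L^{p^*}(\HN\setminus B_R^0)}<\e$; splitting $\int_{\HN}=\int_{B_R^0}+\int_{\HN\setminus B_R^0}$ and using H\"older with the conjugate exponents $\bar p$ and $p^*$ (and $\textnormal{supp}(\beta\circ\rho_n)$ compact), the first piece is $\lesssim\|\varphi\|_\infty R_n^{\frac{Q+2k}{2}}|B_R^0|^{1/\bar p}\LpNorm{U_\infty}{p^*}$, which tends to $0$ as $n\to\infty$ for $R$ fixed, while the second is $\lesssim\|\varphi\|_\infty\,\e$ uniformly in $n$. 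Letting $n\to\infty$ and then $\e\to 0$ gives $\int_{\Sph}v_n\varphi\dSph\to 0$, which combined with step \emph{(a)} proves the lemma.

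The main obstacle I anticipate is that all the estimates above sit exactly at the critical, scale-invariant threshold, so the crude global bounds only give $O(1)$ rather than $o(1)$; the decisive ingredient in both steps — precisely as in the preceding lemmas — is to exploit the tightness of the limiting profile $U_\infty$ in $L^{p^*}(\HN)$ by exhausting $\HN$ with the fixed balls $B_R^0$. Apart from that, the work is bookkeeping: keeping the Jacobian normalization constants consistent across the two intertwining identities, and checking that the cut-off/commutator error terms in step \emph{(a)} are controlled uniformly in $n$, which reduces to the scale invariance of $\LH(\gamma\circ\tau_{w_n}\circ\delta_{R_n})$ in the relevant norms.
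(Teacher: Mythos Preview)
Your proposal is correct and follows essentially the same approach as the paper: reduce to $v_n\rightharpoonup 0$, use boundedness in $\SobSph$ together with $\int_{\Sph}v_n\varphi\dSph\to 0$ for smooth $\varphi$, and obtain the latter via the change of variables $\zeta=\rho_n(w)$ combined with the tightness of $U_\infty$ in $L^{p^*}(\HN)$. The only differences are cosmetic: the paper simply asserts the boundedness of $v_n$ in $\SobSph$ without the detailed commutator argument you supply, and it splits the integral at the moving ball $B_{RR_n}(\zeta_n)$ (equivalently $\set_R^n$) rather than at the fixed ball $B_R^0$, but both splittings exploit the same two ingredients --- the Jacobian bound $\Jacr^{\frac{Q+2k}{2Q}}\lesssim R_n^{\frac{Q+2k}{2}}$ on the inner region and $\|U_\infty\|_{L^{p^*}}$-smallness on the outer one.
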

\begin{proof}
Since we have already proved that $u_n\rightharpoonup 0$, it suffices to show that $v_n\rightharpoonup 0$ weakly in $\SobSph$. On the other hand, $v_n$ is bounded in $\SobSph$ so that, after taking a subsequence if necessary, it converges to some limit; hence we only need to prove that the distributional limit is zero, i.e. it suffices to prove that for $f\in C^{\infty}$ it holds
\begin{equation*}
\int_{\Sph} v_n f\dSph\to 0.
\end{equation*}
Let us fix $R>0$. We estimate the integral above, first on $B_{R_nR}(\zeta_n)$ and then on the exterior domain $\Sph\setminus B_{R_nR}(\zeta_n)$, we have
\begin{align*}
\left| \int_{ B_{R_nR}(\zeta_n)}v_nf \dSph\right|&=\left|\int_{ B_{R_nR}(\zeta_n)} \Jacs^{\frac{Q-2k}{2Q}}f\beta U_{\infty}\circ\sigma_n\dSph \right|\\
&=\left|\int_{\set_{R}^n} \Jacr^{\frac{Q+2k}{2Q}} U_{\infty}(f\beta)\circ\rho_n\dH \right|\\
&\leq CR_n^{\frac{Q+2k}{2}}\|f\|_{\infty} \|\Jacc\|_{\infty}^{\frac{Q+2k}{2Q}}\int_{\set_{R}^n} |U_{\infty}|\dH.
\end{align*}
On the exterior domain, for $n$ big enough we find
\begin{align*}
\left| \int_{\Sph\setminus B_{R_nR}(\zeta_n)}v_nf \dSph\right|&=\left|\int_{ B_1(\zeta_0)\setminus B_{R_nR}(\zeta_n)} \Jacs^{\frac{Q-2k}{2Q}}f\beta U_{\infty}\circ\sigma_n\dSph \right|\\
&\leq\left|\int_{B_{2R_n^{-1}}^n\setminus \set_{R}^n} \Jacr^{\frac{Q+2k}{2Q}} U_{\infty}(f\beta)\circ\rho_n\dH \right|\\
&\leq CR_n^{\frac{Q+2k}{2}}\|f\|_{\infty} \|\Jacc\|_{\infty}^{\frac{Q+2k}{2Q}}\int_{ B_{2R_n^{-1}}^0\setminus B_{\frac{R}{2}}^0} |U_{\infty}|\dH.\\
&\leq C\|f\|_{\infty} \|\Jacc\|_{\infty}^{\frac{Q+2k}{2Q}}\left(\int_{ B_{2R_n^{-1}}^0\setminus B_{\frac{R}{2}}^0} |U_{\infty}|^{p*}\dH\right)^{\frac{1}{p^*}}.
\end{align*}
Then, the thesis follow letting $n\to \infty$ and then $R\to \infty$ in the following estimate
\begin{align}
\left|\int_{\Sph} v_n f\dSph\right| &\lesssim \|f\|_{\infty} \|\Jacc\|_{\infty}^{\frac{Q+2k}{2Q}}R_n^{\frac{Q+2k}{2}}\int_{ \set_{R}^n} |U_{\infty}|\dH\notag \\
&\quad+\|f\|_{\infty} \|\Jacc\|_{\infty}^{\frac{Q+2k}{2Q}} \LpNormB{U_{\infty}}{p^*}{B_{2R_n^{-1}}^0\setminus B_{\frac{R}{2}}^0}.\notag
\end{align}
\end{proof}

\begin{lemma}\label{lem: dE(vn) to 0}
We have
\begin{equation*}
dE(v_n)\to 0 \text{ in }\mSobSph\qquad\text{and}\qquad dE(\ol{u}_n)\to 0\text{ in }\mSobSph.
\end{equation*}
\end{lemma}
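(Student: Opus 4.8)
The plan is to treat the two assertions separately: the first by conformal covariance, reducing it to a statement on $\HN$ about the localized bubble, and the second by a Brezis--Lieb type splitting that feeds on the first. For $dE(v_n)=\LSph v_n-|v_n|^{p^*-2}v_n$, put $V_n:=(\beta\circ\rho_n)U_\infty$ and observe, using $\Jacs\circ\rho_n=\Jacr^{-1}$, that $v_n\circ\rho_n=\Jacr^{-\frac{Q-2k}{2Q}}V_n$. Applying the intertwining relations \eqref{eq: intertwining}--\eqref{eq: relationship LH LSph} for the map $\rho_n$ (as already used in the proof of Lemma \ref{lem: Fn to 0}) together with the identity $(p^*-1)\tfrac{Q-2k}{2Q}=\tfrac{Q+2k}{2Q}$, one obtains
\[
\big(dE(v_n)\big)\circ\rho_n=\Jacr^{-\frac{Q+2k}{2Q}}\big(\LH V_n-|V_n|^{p^*-2}V_n\big).
\]
Testing against an arbitrary $\varphi\in\SobSph$ and changing variables (the $H^{\pm k}$ pairings being conformally invariant, exactly as in Lemma \ref{lem: Fn to 0}) reduces the first claim to $\LH V_n-|V_n|^{p^*-2}V_n\to0$ in $H^{-k}(\HN)$.

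To see this, expand $\LH V_n=\LH\big((\beta\circ\rho_n)U_\infty\big)$ with the $3$-commutator $H^{\LH}$, and use $\LH U_\infty=|U_\infty|^{p^*-2}U_\infty$ (Lemma \ref{lem: Un to Uinfty}) together with $0\le\beta\circ\rho_n\le1$ to get
\begin{align*}
\LH V_n-|V_n|^{p^*-2}V_n=\;& \big((\beta\circ\rho_n)-(\beta\circ\rho_n)^{p^*-1}\big)|U_\infty|^{p^*-2}U_\infty\\
&+U_\infty\,\LH(\beta\circ\rho_n)+H^{\LH}(\beta\circ\rho_n,U_\infty).
\end{align*}
I would show each of the three terms tends to $0$ in $L^{\bar p}(\HN)\hookrightarrow H^{-k}(\HN)$. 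The first is handled by dominated convergence: the prefactor is bounded and vanishes on $B^0_{(6R_n)^{-1}}$ while $R_n\to0$, and $|U_\infty|^{p^*-2}U_\infty\in L^{\bar p}(\HN)$ because $U_\infty\in L^{p^*}(\HN)$. For the second, since $\beta\circ\rho_n=\gamma\circ\tau_{w_n}\circ\delta_{R_n}$, the left-invariance and $2k$-homogeneity of $\LH$ give $\LH(\beta\circ\rho_n)=R_n^{2k}(\LH\gamma)\circ\tau_{w_n}\circ\delta_{R_n}$, so $\|\LH(\beta\circ\rho_n)\|_{L^{\infty}}=O(R_n^{2k})$ and $\|\LH(\beta\circ\rho_n)\|_{L^{Q/2k}(\HN)}$ stays bounded; splitting $\HN=B_R^0\cup(\HN\setminus B_R^0)$, applying Hölder with $\tfrac1{\bar p}=\tfrac1{p^*}+\tfrac{2k}{Q}$ and using $U_\infty\in L^{p^*}(\HN)$, one lets $n\to\infty$ and then $R\to\infty$. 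The third term is treated exactly like the commutator in \eqref{eq: splitting}: by the commutator estimates with both orders equal to $2k$, it is dominated by finitely many expressions $R_{2k-s-t}\big(R_t|\LH(\beta\circ\rho_n)|\cdot R_s(|U_\infty|^{p^*-1})\big)$, and, splitting the factor $R_s(|U_\infty|^{p^*-1})$ (bounded in $L^r(\HN)$ with $\tfrac1r=\tfrac1{\bar p}-\tfrac sQ$) into its restrictions to $B_R^0$ and $\HN\setminus B_R^0$, one uses on $B_R^0$ that $\|\LH(\beta\circ\rho_n)\|_{L^q(B_R^0)}\to0$ for $q$ slightly above $Q/2k$, and on the complement the smallness of the $L^r$-tail; letting $n\to\infty$ then $R\to\infty$ gives $o(1)$.

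For the second assertion, by linearity of $\LSph$ and homogeneity of the nonlinearity write
\[
dE(\ol{u}_n)=dE(u_n)-dE(v_n)+\Theta_n,\qquad \Theta_n:=|u_n|^{p^*-2}u_n-|v_n|^{p^*-2}v_n-|\ol{u}_n|^{p^*-2}\ol{u}_n.
\]
Here $dE(u_n)\to0$ because $u_n$ is a (PS) sequence, and $dE(v_n)\to0$ by the first part, so it remains to prove $\Theta_n\to0$ in $\mSobSph$, for which $\|\Theta_n\|_{L^{\bar p}(\Sph)}\to0$ suffices since $L^{\bar p}(\Sph)\hookrightarrow\mSobSph$. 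Because $u_n$, $v_n$, $\ol{u}_n$ are $\Jacs^{\frac{Q-2k}{2Q}}(\,\cdot\,)\circ\sigma_n$ applied to $U_n$, $V_n$, $U_n-V_n$ respectively, and the nonlinearity is $(p^*-1)$-homogeneous, one has $\Theta_n=\Jacs^{\frac{Q+2k}{2Q}}\,\tilde\Theta_n\circ\sigma_n$ with $\tilde\Theta_n:=|U_n|^{p^*-2}U_n-|V_n|^{p^*-2}V_n-|U_n-V_n|^{p^*-2}(U_n-V_n)$, and since $\tfrac{Q+2k}{2Q}\bar p=1$ the weight matches the change of variables, whence $\|\Theta_n\|_{L^{\bar p}(\Sph)}\lesssim\|\tilde\Theta_n\|_{L^{\bar p}(\HN)}$. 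On a fixed ball $B_R^0$ we have $U_n\to U_\infty$ in $H^k(B_R^0)\hookrightarrow L^{p^*}(B_R^0)$ (Lemma \ref{lem: Un to Uinfty}) and $V_n=(\beta\circ\rho_n)U_\infty=U_\infty$ on $B_R^0$ for $n$ large, so continuity of $s\mapsto|s|^{p^*-2}s$ from $L^{p^*}$ into $L^{\bar p}$ forces $\tilde\Theta_n\to0$ in $L^{\bar p}(B_R^0)$; on $\HN\setminus B_R^0$ the elementary bound $|\tilde\Theta_n|\lesssim\big(|U_n|^{p^*-2}+|U_n-V_n|^{p^*-2}\big)|V_n|+|V_n|^{p^*-1}$, the boundedness of $\|U_n\|_{L^{p^*}(\HN)}$ and $\|U_n-V_n\|_{L^{p^*}(\HN)}$, and $\|V_n\|_{L^{p^*}(\HN\setminus B_R^0)}\le\|U_\infty\|_{L^{p^*}(\HN\setminus B_R^0)}$ give a bound vanishing as $R\to\infty$ uniformly in $n$. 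Letting first $n\to\infty$ and then $R\to\infty$ yields $\|\tilde\Theta_n\|_{L^{\bar p}(\HN)}\to0$, which concludes.

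The delicate point is the commutator term $H^{\LH}(\beta\circ\rho_n,U_\infty)$ in the first part: since $U_\infty$ and its nonlinear image lie only in $L^{p^*}(\HN)$, respectively $L^{\bar p}(\HN)$, globally — no better-than-critical integrability being available — one cannot avoid the two-region decomposition of \eqref{eq: splitting}, and the commutator estimates must be combined with a choice of splitting exponent compatible with the constraint $s_{j,1}+s_{j,2}\in(2k-\e,2k]$. The remainder is bookkeeping with the Jacobian weights $\Jacr$, $\Jacs$ in the conformal reduction.
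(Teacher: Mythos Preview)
Your proposal is correct and follows essentially the same approach as the paper: the same three-term decomposition of $\LH V_n-|V_n|^{p^*-2}V_n$ (coming from the Leibniz-commutator expansion and the equation for $U_\infty$), the same near/far splitting $B_R^0\cup(\HN\setminus B_R^0)$ with the scaling identity $\|\LH(\beta\circ\rho_n)\|_{L^p}=R_n^{2k-Q/p}\|\LH\gamma\|_{L^p}$ and the commutator estimates for $H^{\LH}$, and the same Brezis--Lieb residual $\Theta_n$ (the paper's $A_n$) handled by Hölder and the local strong convergence $U_n\to U_\infty$. The only difference is organizational: you pull everything back to $\HN$ via the conformal weights at the outset and then estimate $\LH V_n-|V_n|^{p^*-2}V_n$ and $\tilde\Theta_n$ directly there, whereas the paper keeps the test function $f$ on $\Sph$, pairs, and only partially changes variables; your packaging is slightly cleaner but the content is identical.
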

\begin{proof}
Let $f\in\SobSph$ and $f_n=\LSph v_n-|v_n|^{p^*-2}v_n$. First we notice that
\begin{align}
\LSph(v_{n})&=\left(\Jacr^{-\frac{1}{\bar p}}\LH(\Jacr^{\frac{1}{p^{*}}}v_{n}\circ\rho_{n})\right)\circ \sigma_{n}\notag \\
&=\left(\Jacr^{-\frac{1}{\bar p}}\LH(\beta\circ\rho_{n}U_{\infty})\right)\circ \sigma_{n} \;.
\end{align}
Thus, we get,
\begin{align*}
\int_{\Sph}f_nf \dSph&= \int_{\Sph} f\Big (\LSph(v_{n})-|v_{n}|^{p^{*}-2}v_{n}\Big) \dSph \notag\\
&=\int_{\HN}\Jacr^{\frac{1}{p^{*}}}f\circ \rho_{n}\LH(\beta\circ \rho_{n} U_{\infty}) \dH-\int_{\Sph}f|v_{n}|^{p^{*}-2}v_{n}\dSph. \notag
\end{align*}
Now notice that
\begin{align*}
\int_{\HN}\Jacr^{\frac{1}{p^{*}}}f\circ \rho_{n}\LH(\beta\circ \rho_{n} U_{\infty}) \dH &=\int_{\HN}\Jacr^{\frac{1}{p^{*}}}f\circ \rho_{n}U_{\infty}\LH(\beta\circ \rho_{n})\dH\\
&\quad+\int_{\HN}\Jacr^{\frac{1}{p^{*}}}(f\beta)\circ \rho_{n} \LH(U_{\infty})\dH\\
&\quad+\int_{\HN}\Jacr^{\frac{1}{p^{*}}}f\circ \rho_{n} H^{\LH}(U_{\infty},\beta\circ \rho_{n})\dH\notag\\
&=\int_{\Sph}f\beta (\Jacr^{-\frac{1}{\bar p}}|U_{\infty}|^{p^{*}-2}U_{\infty})\circ \sigma_{n}\dSph\\
&\quad+\int_{\HN}\Jacr^{\frac{1}{p^{*}}}f\circ \rho_{n}U_{\infty}\LH(\beta\circ \rho_{n})\dH \notag\\
&\quad + \int_{\HN}\Jacr^{\frac{1}{p^{*}}}f\circ \rho_{n} H^{\LH}(U_{\infty},\beta\circ \rho_{n}) \dH.
\end{align*}
Therefore, we have that
\begin{align*}
\int_{\Sph}f_nf \dSph&=\int_{\HN}\Jacr^{\frac{1}{p^{*}}}f\circ \rho_{n}U_{\infty}\LH(\beta\circ \rho_{n})\dH \notag\\
&\qquad +\int_{\HN}\Jacr^{\frac{1}{p^{*}}}f\circ \rho_{n} H^{\LH}(U_{\infty},\beta\circ \rho_{n}) \dH \notag\\
&\qquad + \int_{\Sph}f\Big(\beta-\beta^{p^{*}-1}\Big) \Big(\Jacr^{-\frac{1}{\bar p}}|U_{\infty}|^{p^{*}-2}U_{\infty}\Big)\circ \sigma_{n}\dSph \notag\\
&=I_{1}+I_{2}+I_{3}.
\end{align*}
We estimate each of the three terms above separately. But first, we notice that
$$\|\LH(\beta\circ \rho_{n})\|_{L^{p}}=R_{n}^{2k-\frac{Q}{p}}\|\LH(\gamma)\|_{L^{p}}.$$
In particular, if $p>\frac{Q}{2k}$, then $\|\LH(\beta\circ \rho_{n})\|_{L^{p}}\to 0$. Now we have, for $R>1$,
\begin{align*}
|I_1|&=\left|\int_{\HN}\Jacr^{\frac{1}{p^{*}}}f\circ \rho_{n}U_{\infty}\LH(\beta\circ \rho_{n})\dH\right|\notag \\
&\leq \|\Jacr^{\frac{1}{p^{*}}}f\circ \rho_{n}\|_{L^{p^{*}}}\|U_{\infty}\LH(\beta\circ \rho_{n})\|_{L^{\bar p}}\notag \\
&\leq \|f\|_{H^{k}}\Big(\|U_{\infty}\|_{L^{q}}\|\LH(\beta\circ \rho_{n})\|_{L^{p}(B_{R}^0)}+\|U_{\infty}\|_{L^{p^{*}}(\HN\setminus B_{R}^0)}\|\LH(\beta\circ \rho_{n})\|_{L^{\frac{Q}{2k}}}\Big) \;,
\end{align*}
where $\frac{1}{p}+\frac{1}{q}=\frac{1}{\bar p}$. Taking $p>\frac{Q}{2k}$, we have for $R$ fixed that
$$\|U_{\infty}\|_{L^{q}(B_{R}^0)}\|\LH(\beta\circ \rho_{n})\|_{L^{p}}\to 0 \text{ as } n\to \infty \;.$$
On the other hand, we have that
$$\|U_{\infty}\|_{L^{p^{*}}(\HN\setminus B_{R}^0)}\|\LH(\beta\circ \rho_{n})\|_{L^{\frac{Q}{2k}}}=\|U_{\infty}\|_{L^{p^{*}}(\HN\setminus B_{R}^0)}\|\LH(\gamma)\|_{L^{\frac{Q}{2k}}}\to 0 \text{ as } R\to \infty \;.$$
Hence
$$|I_{1}|=o(1)\|f\|_{H^{k}}.$$
We move now to the term $I_{2}$. First, we recall the following estimate for the Riesz potentials:
\begin{equation*}\label{Ri}
\|R_{2k-s-t}(R_{t}(u)R_{s}(v))\|_{L^{p}}\lesssim \|u\|_{L^{q_{1}}}\|v\|_{L^{q_{2}}},
\end{equation*}
for $\frac{1}{p}=\frac{1}{q_{1}}+\frac{1}{q_{2}}-\frac{2k}{Q}$. In particular we have from Lemma (commutator estimates) and the previous estimates,
\begin{align*}
\|H^{\LH}(U_{\infty},\beta\circ \rho_{n})\|_{L^{\bar p}}&\lesssim \|U_{\infty}^{p^{*}-1}\|_{L^{q}(B_{R}^0)}\|\LH(\beta\circ \rho_{n})\|_{L^{p}}\\
&\qquad+\|U_{\infty}^{p^{*}-1}\|_{L^{\bar p}(\HN\setminus B_{R}^0)}\|\LH(\beta\circ \rho_{n})\|_{L^{\frac{Q}{2k}}}\notag\\
& \lesssim \|U_{\infty}\|_{L^{q(p^{*}-1)}(B_{R}^0)}^{p^{*}-1}\|\LH(\beta\circ \rho_{n})\|_{L^{p}}\\
&\qquad+\|U_{\infty}\|_{L^{p^{*}}(\HN\setminus B_{R}^0)}^{p^{*}-1}\|\LH(\gamma)\|_{L^{\frac{Q}{2k}}}
\end{align*}
where $\frac{1}{\bar p}=\frac{1}{p}+\frac{1}{q}-\frac{2k}{Q}$. Hence, taking $p>\frac{2k}{Q}$ and letting first $n\to 0$ then $R\to \infty$, we have that
\begin{equation}\label{esth}
\|H^{\LH}(U_{\infty},\beta\circ \rho_{n})\|_{L^{\bar p}}=o(1).
\end{equation}
In particular,
$$
|I_2|\leq \|f\|_{H^{k}}\|H^{\LH}(U_{\infty},\beta\circ \rho_{n})\|_{L^{\bar p}}=o(1)\|f\|_{H^k}\quad \text{as } n\to\infty.
$$
Now we estimate the term $I_3$.
\begin{align*}
|I_3|\lesssim\LpNormB{(\beta-\beta^{p^*-1})\circ\rho_n|U_{\infty}|^{p^*-2}U_{\infty}}{\bar{p}}{\HN}\|f\|_{H^k}
\end{align*}
but $U_{\infty}\in D^k(\HN)$ and
\begin{align*}
\LpNormB{(\beta-\beta^{p^*-1})\circ\rho_n|U_{\infty}|^{p^*-2}U_{\infty}}{\bar{p}}{\HN}\leq C\LpNormB{U_{\infty}}{p^*}{B_{2R_{n}^{-1}}^0\setminus B_{(8R_n)^{-1}}^0}^{\frac{Q+2k}{Q-2k}}
\end{align*}
so that
\begin{equation*}
|I_3|\leq o(1)\SobNorm{f}{k}.
\end{equation*}
Hence we have proved that $f_n\to 0$ in $\mSobSph$. Now we turn to $dE(\ol{u}_n)$. Again, we consider $f\in \SobSph$ and compute
\begin{align*}
\la dE(\ol{u}_n),f\ra&=\la dE(u_n),f\ra-\la dE(v_n),f\ra \\
&\quad+\int_{\Sph}\left(|u_n|^{p^*-2}u_n-|v_n|^{p^*-2}v_n-|\ol{u}_n|^{p^*-2}\ol{u}_n\right)f\dSph.
\end{align*}
We notice that, since $dE(u_n)$ and $dE(v_n)$ converge to zero in $\mSobSph$, it suffices to show
\begin{equation}\label{eq: An to 0 in H-k}
A_n=|u_n|^{p^*-2}u_n-|v_n|^{p^*-2}v_n-|\ol{u}_n|^{p^*-2}\ol{u}_n\to 0\quad\text{ in } \mSobSph.
\end{equation}
In order to prove \eqref{eq: An to 0 in H-k}, we will show $\LpNormB{A_n}{\bar{p}}{\Sph}\to 0$. Let us fix $R>0$. First we want to and obtain an estimate for $A_n$ in the exterior domain $D_n=\Sph\setminus B_{RR_n}(\zeta_n)$ and then we will move to the interior of the ball $B_{RR_n}(\zeta_n)$. We write $u_n=\ol{u}_n+v_n$ in the definition of $A_n$ and we notice that for a big enough positive constant $C$ we have
\begin{align*}
|A_n|&=\left||\ol{u}_{n}+v_n|^{p^*-2}(\ol{u}_{n}+v_n)-|\ol{u}_n|^{p^*-2}\ol{u}_n-|v_n|^{p^*-2}v_n\right|\\
&\leq C\left( |\ol{u}_{n}|^{p^*-2}|v_n|+ |v_n|^{p^*-2}|\ol{u}_n|\right).
\end{align*}
Hence, by the H\"older inequality and recalling that $\textnormal{supp}(\beta\circ\rho_n)\subseteq B_{2R_n^{-1}}^0$ and $B_{\frac{R}{2}}^0\subseteq \set_R^n$, we have
\begin{align*}
\LpNormB{A_n}{\bar{p}}{D_n}&\lesssim \left(\LpNormB{|\ol{u}_n|^{p^*-2}v_n}{\bar{p}}{D_n}+\LpNormB{\ol{u}_n|v_n|^{p^*-2}}{\bar{p}}{D_n}\right)\\
&\lesssim \left(\LpNormB{\ol{u}_n}{p^*}{D_n}^{\frac{4k}{Q-2k}}\LpNormB{v_n}{p^*}{D_n}+\LpNormB{\ol{u}_n}{p^*}{D_n}\LpNormB{v_n}{p^*}{D_n}^{\frac{4k}{Q-2k}}\right)\\
&\lesssim \LpNormB{\ol{u}_n}{p^*}{\Sph}^{\frac{4k}{Q-2k}}\LpNormB{U_{\infty}}{p^*}{B_{2R_n^{-1}}^0\setminus B_{\frac{R}{2}}^0}\\
&\quad+\LpNormB{\ol{u}_n}{p^*}{\Sph}\LpNormB{U_{\infty}}{p^*}{B_{2R_n^{-1}}^0\setminus B_{\frac{R}{2}}^0}^{\frac{4k}{Q-2k}}\\
&=o(1),
\end{align*}
as $R\to\infty$, uniformly in $n$. Similarly, in the interior of the ball $B_{RR_n}(\zeta_n)$ we find
\begin{align*}
\LpNormB{A_n}{\bar p}{B_{RR_n}(\zeta_n)}&\lesssim \left(\LpNormB{|\ol{u}_n|^{p^*-2}v_n}{\bar p}{B_{RR_n}(\zeta_n)}+\LpNormB{\ol{u}_n|v_n|^{p^*-2}}{\bar p}{B_{RR_n}(\zeta_n)}\right)\\
&\lesssim \LpNormB{\ol{u}_n}{p^*}{B_{RR_n}(\zeta_n)}^{\frac{4k}{Q-2k}}\LpNormB{v_n}{p^*}{B_{RR_n}(\zeta_n)}\\
&\quad+\LpNormB{\ol{u}_n}{p^*}{B_{RR_n}(\zeta_n)}\LpNormB{v_n}{p^*}{B_{RR_n}(\zeta_n)}^{\frac{4k}{Q-2k}}\\
&\lesssim \LpNormB{U_n-(\beta\circ\rho_n)U_{\infty}}{p^*}{\set_R^n}^{\frac{4k}{Q-2k}}\LpNormB{(\beta\circ\rho_n)U_{\infty}}{p^*}{\set_R^n}\\
&\quad+\LpNormB{U_n-(\beta\circ\rho_n)U_{\infty}}{p^*}{\set_R^n}\LpNormB{(\beta\circ\rho_n)U_{\infty}}{p^*}{\set_R^n}^{\frac{4k}{Q-2k}}.
\end{align*}
Therefore, recalling the fact that $U_n\to U_{\infty}$ in $\locSobH$, we finally obtain
\begin{equation*}
\int_{B_{RR_n}(\zeta_n)}|A_n|^{\bar p}\dSph \to 0
\end{equation*}
as desired.
\end{proof}

\begin{lemma}\label{lem: the energy splits2}
We have the following energy estimate
\begin{equation*}
E(\ol{u}_n)=E(u_n)-E_{\H}(U_{\infty})+o(1).
\end{equation*}
\end{lemma}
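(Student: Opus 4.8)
The plan is to expand $E(\ol u_n)=E(u_n-v_n)$ using bilinearity of the quadratic form together with a Brezis--Lieb type splitting of the $L^{p^*}$ term, and then to identify each of the remaining cross terms with the mass $I:=\int_{\HN}|U_{\infty}|^{p^*}\dH$ by passing to $\HN$ through $\rho_n$. Since $\LSph$ is self-adjoint, $\tfrac{1}{2}\int_{\Sph}\ol u_n\LSph\ol u_n\dSph=\tfrac{1}{2}\SobNorm{u_n}{k}^2-\la u_n,v_n\ra_{H^k}+\tfrac{1}{2}\SobNorm{v_n}{k}^2$, so the whole statement reduces to the following four asymptotics:
\begin{equation*}
\la u_n,v_n\ra_{H^k}=I+o(1),\qquad \SobNorm{v_n}{k}^2=I+o(1),\qquad \int_{\Sph}|v_n|^{p^*}\dSph=I+o(1),
\end{equation*}
\begin{equation*}
\int_{\Sph}|u_n-v_n|^{p^*}\dSph=\int_{\Sph}|u_n|^{p^*}\dSph-\int_{\Sph}|v_n|^{p^*}\dSph+o(1).
\end{equation*}
Granting these, a direct substitution gives $E(\ol u_n)=E(u_n)-\la u_n,v_n\ra_{H^k}+\tfrac{1}{2}\SobNorm{v_n}{k}^2+\tfrac{1}{p^*}\int_{\Sph}|v_n|^{p^*}\dSph+o(1)=E(u_n)-\big(\tfrac{1}{2}-\tfrac{1}{p^*}\big)I+o(1)$; and since $U_{\infty}$ solves \eqref{eq: problem on H} we have $\int_{\HN}U_{\infty}\LH U_{\infty}\dH=I$, hence $\big(\tfrac{1}{2}-\tfrac{1}{p^*}\big)I=E_{\H}(U_{\infty})$, which is the assertion.

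For the first two asymptotics I would change variables via $\rho_n$ exactly as in Lemma \ref{lem: dE(vn) to 0}. Using $\Jacr^{\frac{1}{p^*}}v_n\circ\rho_n=\beta\circ\rho_n\,U_{\infty}$, $\Jacr^{\frac{1}{p^*}}u_n\circ\rho_n=U_n$ and \eqref{eq: relationship LH LSph}, together with self-adjointness of $\LSph$, one gets $\la u_n,v_n\ra_{H^k}=\int_{\HN}U_n\LH(\beta\circ\rho_n\,U_{\infty})\dH$ and $\SobNorm{v_n}{k}^2=\int_{\HN}(\beta\circ\rho_n\,U_{\infty})\LH(\beta\circ\rho_n\,U_{\infty})\dH$. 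Writing $\LH(\beta\circ\rho_n\,U_{\infty})=(\beta\circ\rho_n)\LH U_{\infty}+U_{\infty}\LH(\beta\circ\rho_n)+H^{\LH}(U_{\infty},\beta\circ\rho_n)$, the bound $\|\LH(\beta\circ\rho_n)\|_{L^p}=R_n^{2k-\frac{Q}{p}}\|\LH(\gamma)\|_{L^p}$ for $p>\frac{Q}{2k}$ and \eqref{esth} (both obtained by splitting over $B_R^0$ and $\HN\setminus B_R^0$, letting $n\to\infty$ then $R\to\infty$) force the last two summands to be $o(1)$ in $L^{\bar p}$, so their pairings with $U_n$ and with $\beta\circ\rho_n\,U_{\infty}$, both bounded in $L^{p^*}$ (the latter by $\|U_{\infty}\|_{L^{p^*}}$), vanish. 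For the main term one uses $\LH U_{\infty}=|U_{\infty}|^{p^*-2}U_{\infty}$ and that $\beta\circ\rho_n\equiv1$ on $B_{(6R_n)^{-1}}^0$: splitting $\HN=B_R^0\cup(\HN\setminus B_R^0)$, the inner integral tends to $\int_{B_R^0}|U_{\infty}|^{p^*}\dH$, by the strong $L^{p^*}_{\textnormal{loc}}$ convergence $U_n\to U_{\infty}$ (a consequence of Lemma \ref{lem: Un to Uinfty} and the embedding $H^k_0\hookrightarrow L^{p^*}$) in the first case, and by dominated convergence $(\beta\circ\rho_n)^2\to1$ in the second, while the outer integral is $O\big(\|U_{\infty}\|_{L^{p^*}(\HN\setminus B_R^0)}^{p^*-1}\big)=o(1)$ as $R\to\infty$. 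The third asymptotic is immediate: $\int_{\Sph}|v_n|^{p^*}\dSph=\int_{\HN}|\beta\circ\rho_n|^{p^*}|U_{\infty}|^{p^*}\dH\to I$ by dominated convergence.

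The fourth asymptotic is the only genuinely new point, and it is the $p^*$-analogue of the treatment of $A_n$ in the proof of Lemma \ref{lem: dE(vn) to 0}. I would split $\Sph=B_{RR_n}(\zeta_n)\cup D_n$, $D_n=\Sph\setminus B_{RR_n}(\zeta_n)$. On $D_n$ the bubble is small: since $\set_R^n\supset B_{\frac{R}{2}}^0$, one has $\int_{D_n}|v_n|^{p^*}\dSph\leq\int_{\HN\setminus B_{\frac{R}{2}}^0}|U_{\infty}|^{p^*}\dH\to0$ as $R\to\infty$, uniformly in $n$; combined with the pointwise bound $\big||u_n-v_n|^{p^*}-|u_n|^{p^*}\big|\lesssim|u_n|^{p^*-1}|v_n|+|v_n|^{p^*}$, the boundedness of $u_n$ in $L^{p^*}(\Sph)$ and the H\"older inequality, this yields $\int_{D_n}|u_n-v_n|^{p^*}\dSph=\int_{D_n}\big(|u_n|^{p^*}-|v_n|^{p^*}\big)\dSph+o(1)$, uniformly in $n$, as $R\to\infty$. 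On $B_{RR_n}(\zeta_n)$ one passes to $\HN$: since $\beta\circ\rho_n\equiv1$ on $\set_R^n$ for $n$ large (this set being bounded uniformly in $n$), $\int_{B_{RR_n}(\zeta_n)}|u_n-v_n|^{p^*}\dSph=\int_{\set_R^n}|U_n-U_{\infty}|^{p^*}\dH\to0$ as $n\to\infty$, and likewise $\int_{\set_R^n}\big(|U_n|^{p^*}-|U_{\infty}|^{p^*}\big)\dH\to0$ as $n\to\infty$, both by the strong $L^{p^*}_{\textnormal{loc}}$ convergence of $U_n$; hence $\int_{B_{RR_n}(\zeta_n)}|u_n-v_n|^{p^*}\dSph=\int_{B_{RR_n}(\zeta_n)}\big(|u_n|^{p^*}-|v_n|^{p^*}\big)\dSph+o(1)$ for fixed $R$. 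Adding the two regions and letting $n\to\infty$ and then $R\to\infty$ gives the fourth asymptotic. The main obstacle is really only the bookkeeping: keeping the order of the limits ($n\to\infty$ first, then $R\to\infty$) coherent across the various splittings, and checking that the conformal Jacobian weights cancel exactly in each change of variables so that only the $L^{p^*}$-mass $I$ of $U_{\infty}$ survives; all the analytic input (the commutator estimates and the sub-elliptic regularity) has already been isolated in the preceding lemmas.
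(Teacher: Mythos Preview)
Your proposal is correct and follows essentially the same route as the paper: the paper likewise expands $E(\ol u_n)$ into the quadratic terms $\int u_n\LSph u_n$, $\int v_n\LSph v_n$, $\int u_n\LSph v_n$ and the $L^{p^*}$ term, passes to $\HN$ via $\rho_n$, handles the cross terms with the commutator decomposition $\LH(\beta\circ\rho_n\,U_\infty)=(\beta\circ\rho_n)\LH U_\infty+U_\infty\LH(\beta\circ\rho_n)+H^{\LH}(U_\infty,\beta\circ\rho_n)$ together with the scaling $\|\LH(\beta\circ\rho_n)\|_{L^p}=R_n^{2k-Q/p}\|\LH\gamma\|_{L^p}$ and \eqref{esth}, and treats the nonlinear term by the same interior/exterior splitting over $B_{RR_n}(\zeta_n)$ and $D_n$. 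The only cosmetic differences are that the paper writes the Brezis--Lieb identity directly as $\int_{\Sph}|\ol u_n|^{p^*}=\int_{\Sph}|u_n|^{p^*}-\int_{\HN}|U_\infty|^{p^*}+o(1)$ (rather than via your intermediate third asymptotic), and on $D_n$ it uses the inequality $\big||u_n|^{p^*}-|\ol u_n|^{p^*}-|v_n|^{p^*}\big|\lesssim|\ol u_n|^{p^*-1}|v_n|+|v_n|^{p^*-1}|\ol u_n|$ instead of your variant; both lead to the same conclusion.
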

\begin{proof}
We compute
\begin{gather}\label{eq: E(ol(un))}
\begin{split}
E(\ol{u}_n)&=\frac{1}{2}\left(\int_{\Sph}u_n\LSph u_n\dSph+\int_{\Sph}v_n\LSph v_n\dSph-2\int_{\Sph}u_n\LSph v_n\dSph\right)\\
&\qquad-\frac{1}{p^*}\int_{\Sph}|\ol{u}_n|^{p^*}\dSph.\\
\end{split}
\end{gather}
Let us estimate the second term in the identity above
\begin{align*}
\int_{\Sph} v_n\LSph v_n\dSph&=\int_{\HN} \beta\circ\rho_n U_{\infty}\LH (\beta\circ\rho_n  U_{\infty})\dH\\
&=\int_{\HN} \beta^2\circ\rho_n U_{\infty}\LH  U_{\infty}\dH+\int_{\HN} \beta\circ\rho_n U_{\infty}^2\LH (\beta\circ\rho_n)\dH\\
&\quad+\int_{\HN}\beta\circ\rho_{n}U_{\infty}H^{\LH}(U_{\infty},\beta\circ\rho_{n})\dH\\
&=I_1+I_2+I_3.
\end{align*}
Clearly,
\begin{equation*}
I_1=\int_{\HN}  U_{\infty}\LH  U_{\infty}\dH+o(1)
\end{equation*}
and using the same argument as in Lemma \ref{lem: Un to Uinfty},
\begin{gather*}
\begin{split}
\left|I_2\right|&=\left|\int_{\HN} \beta\circ\rho_n U_{\infty}^2\LH (\beta\circ\rho_n)\dH \right|\\
&\lesssim \|U_{\infty}^{2}\|_{L^{q}(B_{R}^0)}\|\LH (\beta\circ\rho_n)\|_{L^{p}}+\|U_{\infty}^{2}\|_{L^{\frac{Q}{Q-2k}}(\HN\setminus B_{R}^0)}\|\LH (\beta\circ\rho_n)\|_{L^{\frac{Q}{2k}}}\notag\\
&\lesssim \|U_{\infty}\|^{2}_{L^{2q}(B_{R}^0)}\|\LH (\beta\circ\rho_n)\|_{L^{p}}+\|U_{\infty}\|^{2}_{L^{p^{*}}(\HN\setminus B_{R}^0)}\|\LH (\gamma)\|_{L^{\frac{Q}{2k}}}
\end{split}
\end{gather*}
for $\frac{1}{p}+\frac{1}{q}=1$, taking $p>\frac{Q}{2k}$ and letting $n\to \infty$ then $R\to \infty$, we have that
$$I_{2}=o(1),$$
as $n\to\infty.$ Now for $I_{3}$, using $(\ref{esth})$, we have
$$|I_{3}|\lesssim \|U_{\infty}\|_{L^{p^{*}}}\|H^{\LH}(U_{\infty},\beta\circ\rho_{n})\|_{L^{\bar p}}=o(1).$$
Combining the estimates of $I_1$, $I_2$ and $I_3$, we find
\begin{equation}\label{eq: estimate vnLvn}
\int_{\Sph}v_n\LSph v_n\dSph=\int_{\HN} U_{\infty}\LH U_{\infty} \dH +o(1) \;.
\end{equation}
Now we estimate the third term in \eqref{eq: E(ol(un))}
\begin{align*}
\int_{\Sph} u_n\LSph v_n\dSph&=\int_{\HN} U_n \LH (\beta\circ\rho_{n}U_{\infty})\dH\\
&=\int_{\HN} \beta\circ\rho_{n} U_n \LH U_{\infty}\dH+\int_{\HN} U_n U_{\infty}\LH (\beta\circ\rho_{n})\dH\\
&\quad + \int_{\HN}U_{n}H^{\LH}(U_{\infty},\beta\circ \rho_{n})\dH\\
&=I_4+I_5+I_6.
\end{align*}
Let us fix $R>0$ and define $B_n=B_{2R_n^{-1}}^0\setminus B_R^0$. For $n$ big enough to have $\beta\circ\rho_n\equiv 1$ on $B_{(6R_n)^{-1}}^0\supset B_R^0 $, we get
\begin{align*}
I_4=\int_{B_R^0}U_n\LH U_{\infty}\dH+\int_{B_n}\beta\circ\rho_n U_n\LH U_{\infty}\dH \;,
\end{align*}
and we estimate the second term in the identity above by
\begin{align*}
\left| \int_{B_n}\beta\circ\rho_n U_n\LH U_{\infty}\dH \right| &\leq C\LpNormB{U_n}{p^*}{B_{2R_n^{-1}}^0}\LpNormB{\LH U_{\infty}}{\bar{p}}{B_n}=o(1) \;,
\end{align*}
as $R\to \infty$ uniformly in $n$. Hence, since $U_n\to U_{\infty}$ in $\locSobH$
\begin{equation*}
I_4=\int_{B_R^0}U_{\infty}\LH U_{\infty}\dH+ o(1).
\end{equation*}
Let us turn the attention to $I_5$
\begin{align*}
|I_5|&=\left|\int_{\HN} U_n U_{\infty}\LH (\beta\circ\rho_{n})\dH\right|\\
&\lesssim \|U_{n}\|_{L^{p^{*}}}\Big(\|U_{\infty}\|_{L^{q}(B_{R})}\|\LH (\beta\circ\rho_n)\|_{L^{p}}+\|U_{\infty}\|_{L^{p^{*}}(\HN\setminus B_{R}^0)}\|\LH (\gamma)\|_{L^{\frac{Q}{2k}}}\Big) \;,
\end{align*}
where $\frac{1}{p}+\frac{1}{q}=\frac{1}{\bar p}$. Once again, if we take $p>\frac{Q}{2k}$, and let $n\to \infty$ then $R\to \infty$ we get
$$I_{5}=o(1).$$
Also,
\begin{align*}
|I_6|&\leq \|U_{n}\|_{L^{p^{*}}}\|H^{\LH}(U_{\infty},\beta\circ \rho_{n})\|_{L^{\bar p}}=o(1) \;.
\end{align*}
Combining the estimates for $I_4$, $I_5$ and $I_6$ we get
\begin{equation}\label{eq: estimate unLvn}
\int_{\Sph} u_n\LSph v_n \dSph =\int_{B_R^0}U_{\infty}\LH U_{\infty}\dH +o(1) \;.
\end{equation}
We consider now the last term in \eqref{eq: E(ol(un))}. We are going to show that
\begin{equation}\label{eq: estimate olun}
\int_{\Sph} |\ol{u}_n|^{p^*} \dSph= \int_{\Sph} |u_n|^{p^*} \dSph-\int_{\HN} |U_{\infty}|^{p^*} \dH+o(1)\quad\text{as } n\to\infty.
\end{equation}
Hence, using \eqref{eq: estimate vnLvn}, \eqref{eq: estimate unLvn} and \eqref{eq: estimate olun} to estimate the right hand side of \eqref{eq: E(ol(un))} we get
\begin{equation*}
E(\ol{u}_n)=E(u_n)-E_{\H}(U_{\infty})+o(1) \;,
\end{equation*}
as desired. Before proving \eqref{eq: estimate olun} we make a few observations. Let us fix $R>0$ and define $D_n=\Sph\setminus B_{RR_n}(\zeta_n)$. First, we notice that for $n$ big enough to have $\beta\circ\rho_n\equiv 1$ on $B_R^0$ we find
\begin{equation}\label{eq: estimate olun on B}
\int_{B_{RR_n}(\zeta_n)}|\ol{u}_n|^{p^*}\dSph=\int_{\set_R^n}|U_n-U_{\infty}|^{p^*}\dH=o(1) \;,
\end{equation}
as $ n\to \infty$, since by Lemma \ref{lem: Un to Uinfty} $U_n\to U_{\infty}$ in $\locSobH$. Also,
\begin{align}\label{eq: estimate vn on Dn}
\int_{D_n}|v_n|^{p^*}\dSph=\int_{B_{2R_n^{-1}}^0\setminus \set_R^n}|\beta\circ\rho_n U_{\infty}|^{p^*}\dH\leq C\int_{\HN\setminus B_{\frac{R}{2}}^0}| U_{\infty}|^{p^*}\dH=o(1) \;,
\end{align}
as $R\to\infty$ uniformly in $n$. Moreover
\begin{equation}\label{eq: estimate olun on Dn}
\int_{D_n}|\ol{u}_n|^{p^*}\dSph=\int_{D_n}|u_n|^{p^*}\dSph-\int_{D_n}|v_n|^{p^*}\dSph+o(1) \;,
\end{equation}
as $R\to \infty$ uniformly in $n$. Indeed, for a suitable constant $C$, independent of $n$ we have
\begin{equation*}
\int_{D_n}\left||u_n|^{p^*}-|\ol{u}_n|^{p^*}-|v_n|^{p^*}\right|\dSph\leq C\int_{D_n}|\ol{u}_n|^{p^*-1}|v_n|\dSph+C\int_{D_n}|v_n|^{p^*-1}|\ol{u}_n|\dSph \;,
\end{equation*}
but
\begin{align*}
\int_{D_n}|\ol{u}_n|^{p^*-1}|v_n|\dSph&=\int_{\HN\setminus \set_R^n}|U_n-\beta\circ\rho_n U_{\infty}|^{p^*-1}|\beta\circ\rho_n U_{\infty}|\dH\\
&\lesssim \int_{B_n}|U_n-\beta\circ\rho_n U_{\infty}|^{p^*-1}| U_{\infty}|\dH\\
&\lesssim \LpNormB{U_n-\beta\circ\rho_n U_{\infty}}{p^*}{B_n}^{\frac{Q+2k}{Q-2k}}\LpNormB{U_{\infty}}{p^*}{B_n}\\
&\lesssim \LpNormB{U_{\infty}}{p^*}{B_n}\\
&=o(1) \;,
\end{align*}
as $R\to \infty$ uniformly in $n$. Similarly
\begin{align*}
\int_{D_n}|v_n|^{p^*-1}|\ol{u}_n|\dSph&\lesssim \int_{\HN\setminus B_R^0}|\beta\circ\rho_n U_{\infty}|^{p^*-1}|U_n-\beta\circ\rho_n U_{\infty}|\dH\\
&\lesssim \LpNormB{U_{\infty}}{p^*}{B_n}^{\frac{Q+2k}{Q-2k}}\LpNormB{U_n-\beta\circ\rho_nU_{\infty}}{p^*}{B_n}\\
&\lesssim \LpNormB{U_{\infty}}{p^*}{B_n}\\
&=o(1) \;,
\end{align*}
as $R\to \infty$ uniformly in $n$, proving \eqref{eq: estimate olun on Dn}. We are ready now to prove \eqref{eq: estimate olun}:
\begin{align*}
\int_{\Sph}|\ol{u}_n|^{p^*}\dSph&=\int_{B_{RR_n}(\zeta_n)}|\ol{u}_n|^{p^*}\dSph+\int_{D_n}|\ol{u}_n|^{p^*}\dSph\\
&\overset{\eqref{eq: estimate olun on B}}{=} \int_{D_n}|\ol{u}_n|^{p^*}\dSph+o(1)\\
&\overset{\eqref{eq: estimate olun on Dn}}{=} \int_{D_n}|u_n|^{p^*}\dSph-\int_{D_n}|v_n|^{p^*}\dSph+o(1)\\
&\overset{\eqref{eq: estimate vn on Dn}}{=} \int_{D_n}|u_n|^{p^*}\dSph+o(1)\\
&=\int_{\Sph}|u_n|^{p^*}\dSph-\int_{B_{RR_n}(\zeta_n)}|u_n|^{p^*}\dSph+o(1)\\
&=\int_{\Sph}|u_n|^{p^*}\dSph-\int_{\set_R^n}|U_n|^{p^*}\dH+o(1).
\end{align*}
Now, recalling that $\int_{\set_R^n}|U_n|^{p^*}\dH\to\int_{\set_R^0}|U_{\infty}|^{p^*}\dH$ for any $R$ as $n\to \infty$ we finally get \eqref{eq: estimate olun}.
\end{proof}

\begin{remark}\label{rmk: Energy of the bubbles}
Let $\omega$ be defined by \eqref{eq: optimizer function}. Let us explicitly recall that the functions
\begin{equation*}
\omega_{\lambda,\xi}=\lambda^{\frac{2k-Q}{2}}\omega\circ\delta_{\lambda^{-1}}\circ\tau_{\xi^{-1}},\quad \lambda>0,\; \xi\in\HN\;,
\end{equation*}
are solutions to \eqref{eq: problem on H} which have all the same energy
\begin{equation*}
C_E:=E_{\H}(\omega_{\lambda,\xi})=\frac{k}{Q} C_{S}^{-\frac{Q}{2k}}>0, \quad \forall \lambda>0 \text{ and } \xi\in\HN.
\end{equation*}
where $C_S$ is the Sobolev constant in (\ref{eq: Sobolev constant Sph}). In particular they are the only ones with this energy (\cite{FL}).
\end{remark}


\noindent
Now we conclude the proof of the main result.
\begin{proof}[proof of Theorem \ref{thm: classification of PS sequences} ]
We define 
$$u^{1}_n:=u_n-u_{\infty}$$
and by Lemma \ref{lem: the energy splits} we have
$$E(u^{1}_n)=E(u_n)-E(u_\infty)+o(1) \;.$$
By the procedure described above, we find a sequence of points $\zeta_n^1$ converging to a concentration point $\zeta^1 \in S^{2N+1}$, a sequence of radii $R_n^1$ converging to zero, a solution $U_{\infty}^1$ to equation (\ref{eq: problem on H}) and a sequence $v_n^1$ defined as in (\ref{eq: v_n}); therefore we set:
\begin{equation*}
u_n^{2}:=u^{1}_n-v_n^{1}=u_n-u_{\infty}-v_n^{1}\;.
\end{equation*}
By Lemma \ref{lem: the energy splits2}, we get
\begin{equation*}
E(u_n^{2})=E(u^{1}_n)-E_{\H}(U_{\infty}^{1})+o(1)=E(u_n)-E(u_\infty)-E_{\H}(U_{\infty}^{1})+o(1).
\end{equation*}
Now we iteratively apply this procedure obtaining
\begin{equation*}
u_n^{m}=u_n-u_{\infty}-\sum_{l=1}^{m-1} v^{l}_n
\end{equation*}
and
\begin{equation*}
E(u_n^{m})=E(u_n)-E(u_{\infty})-\sum_{l=1}^{m-1} E_{\H}(U_{\infty}^{l})+o(1)\;.
\end{equation*}
Since $E_{\H}(U_{\infty}^l)\geq C_E$ for every $l=1\dots m$, we stop the process when $c-mC_E<C_E$. Indeed, by Lemma \ref{lem: critical energy level}, (PS) sequences at levels strictly below $\frac{k}{Q}C_{S}^{-\frac{Q}{2k}}$ converge strongly in $\SobSph$, and this concludes the proof.
\end{proof}


\section{Existence of infinitely many solutions}

\noindent
In this section we will prove the existence of infinitely many solutions of \eqref{eq: problem on H} proceeding as in \cite{MMT2015}. The key idea is to find a suitable subspace of the space of variations for the functional we are interested in, on which it is straightforward to perform the following minimax argument by Ambrosetti and Rabinowitz (see \cite[Theorems 3.13 and 3.14]{AR}).
\begin{lemma}\label{lem: Amb-Rab}
Let $X$ be a closed infinite dimensional subspace of $\SobSph$. Assume that $E_{|_{X}}$, the restriction of $E$ on $X$, satisfies the Palais-Smale compactness condition on $X$. Then, there exists a sequence $u_n$ of critical points for $E_{|_X}$ such that
\begin{equation*}
\int_{\Sph} |u_n|^{p^*}\dSph\to \infty \quad\textnormal{as } n\to \infty.
\end{equation*}
\end{lemma}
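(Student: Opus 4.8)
The plan is to produce the desired critical points by the classical equivariant (symmetric) minimax scheme of Ambrosetti and Rabinowitz, applied to the even functional $E$ regarded as living on the Hilbert space $X$. Since $X$ is a closed subspace of $\SobSph$ and $E\in C^1(\SobSph,\R)$ is even with $E(0)=0$, the restriction $E|_X$ inherits all of these properties. By \cite[Theorems 3.13 and 3.14]{AR} it then suffices to check two geometric conditions on $E|_X$; the hypothesis that $E|_X$ satisfies the Palais-Smale condition on $X$ will then yield an unbounded sequence of critical values $c_n\to+\infty$ together with associated critical points $u_n\in X$.

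For the first condition (mountain-pass geometry at the origin) I would use the sharp Sobolev inequality \eqref{eq: Sobolev inequality Sph}: for every $u\in X$,
\begin{equation*}
E(u)=\frac12\SobNorm{u}{k}^2-\frac{1}{p^*}\LpNorm{u}{p^*}^{p^*}\geq\frac12\SobNorm{u}{k}^2-\frac{C_S^{p^*/2}}{p^*}\SobNorm{u}{k}^{p^*},
\end{equation*}
and since $p^*>2$ there are $\rho>0$ and $\alpha>0$ with $E(u)\geq\alpha$ whenever $u\in X$ and $\SobNorm{u}{k}=\rho$. For the second condition (behaviour on finite dimensional subspaces), if $W\subset X$ is finite dimensional then all norms on $W$ are equivalent, so $\LpNorm{u}{p^*}\geq c_W\SobNorm{u}{k}$ for some $c_W>0$ and all $u\in W$, whence
\begin{equation*}
E(u)\leq\frac12\SobNorm{u}{k}^2-\frac{c_W^{p^*}}{p^*}\SobNorm{u}{k}^{p^*}\to-\infty\qquad\text{as }\SobNorm{u}{k}\to\infty\text{ in }W,
\end{equation*}
so $E\leq 0$ on $W$ outside a large enough ball. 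As $X$ is infinite dimensional it contains subspaces of every finite dimension, which is exactly what is needed for the minimax values built on the Krasnoselskii genus of symmetric subsets of $X$ to diverge to $+\infty$.

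Granting the abstract theorem, I obtain $\{u_n\}\subset X$ with $\la dE(u_n),v\ra=0$ for all $v\in X$ and $E(u_n)=c_n\to+\infty$. To conclude, I would convert the divergence of the energy into divergence of the $L^{p^*}$ norm: testing the criticality relation against $v=u_n\in X$ gives $\SobNorm{u_n}{k}^2=\LpNorm{u_n}{p^*}^{p^*}$, hence
\begin{equation*}
c_n=E(u_n)=\left(\frac12-\frac{1}{p^*}\right)\LpNorm{u_n}{p^*}^{p^*}=\frac{k}{Q}\int_{\Sph}|u_n|^{p^*}\dSph,
\end{equation*}
so that $\int_{\Sph}|u_n|^{p^*}\dSph=\frac{Q}{k}\,c_n\to\infty$, which is the assertion.

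The only genuinely delicate point in this argument is the divergence $c_n\to+\infty$; the rest is soft. That divergence is the core of the symmetric minimax theorem invoked from \cite{AR}: it relies on the genus estimates and the infinite dimensionality of $X$, and, decisively, on the Palais-Smale hypothesis on $X$, which guarantees that each minimax level is attained as a critical value of $E|_X$ rather than being lost to the lack of compactness intrinsic to the embedding \eqref{emb}.
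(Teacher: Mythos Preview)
Your proposal is correct and follows exactly the approach the paper intends: the paper does not give a proof of this lemma at all, but simply cites \cite[Theorems 3.13 and 3.14]{AR}, and your argument is precisely the standard verification of the hypotheses of that abstract result (evenness, mountain-pass geometry via \eqref{eq: Sobolev inequality Sph}, negativity at infinity on finite dimensional subspaces) together with the observation that at a critical point $E(u_n)=\frac{k}{Q}\LpNorm{u_n}{p^*}^{p^*}$, so $c_n\to\infty$ gives the conclusion. There is nothing to compare; you have supplied the details the paper leaves implicit.
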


\noindent
Let  us start by fixing some notations. We denote by $\mathbb{O}(2N+2)$, the group of $(2N+2)\times(2N+2)$ orthogonal matrices, and
\begin{equation*}
\U=\{g\in \mathbb{O}(2N+2):\; gJ=Jg\},
\end{equation*}
where
\begin{equation*}
J=\begin{pmatrix}
0 & -I_{N+1} \\
I_{N+1} & 0
\end{pmatrix}.
\end{equation*}
Since the elements of $\U$ define isometries on $\Sph$ and $\LSph$ is an intertwining operator (i.e. satisfies \eqref{eq: intertwining}), it follows that the functional $E$ is invariant under the action of $\U$:
\begin{equation}\label{eq: E is U(N+1) invariant}
E(u)=E(u\circ g),\quad \textnormal{for all }g\in \U\;,
\end{equation}
(for a detailed proof see for instance \cite{Kr}). For a subgroup $G$ of $\U$ we define
\begin{equation*}
X_G=\left\{u\in \SobSph:\; u\circ g=u, \;\forall g\in G	\right\}.
\end{equation*}
\begin{lemma}\label{lem: E_XG is PS}
Let $G$ be a subgroup of $\U$ such that for any $\zeta_0\in \Sph$ the $G$-orbit of $\zeta_0$ has at least one accumulation point. Then, $E_{|_{X_G}}$, the restriction of $E$ to $X_G$, satisfies the Palais-Smale condition.
\end{lemma}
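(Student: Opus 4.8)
The plan is to reduce the statement, via the principle of symmetric criticality, to the classification of Palais--Smale sequences established in Theorem~\ref{thm: classification of PS sequences}, and then to exploit the hypothesis on the $G$-orbits to rule out any bubbling. First I would check that a (PS) sequence $u_n\subseteq X_G$ for $E_{|_{X_G}}$ is automatically a (PS) sequence for $E$ on all of $\SobSph$: since $E$ is $\U$-invariant by \eqref{eq: E is U(N+1) invariant} and the inner product $\la\cdot,\cdot\ra_{H^k}$ is preserved by the isometries in $\U$ (because $\LSph$ intertwines them and $\dSph$ is $\U$-invariant), at any $G$-invariant point the gradient $\nabla E(u_n)$ is again $G$-invariant, hence belongs to $X_G$; therefore $\|dE(u_n)\|_{\mSobSph}=\|dE_{|_{X_G}}(u_n)\|_{X_G^{*}}\to 0$, while $E(u_n)\to c$, as required.

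Next, by Lemma~\ref{lem: boundness of PS sequences} the sequence $u_n$ is bounded, so up to a subsequence $u_n\rightharpoonup u_\infty$ weakly in $\SobSph$, with $u_\infty\in X_G$ because $X_G$ is weakly closed. I would then set $v_n=u_n-u_\infty\in X_G$, which by Lemma~\ref{lem: the energy splits} is still a (PS) sequence for $E$ with $v_n\rightharpoonup 0$. If $v_n\to 0$ strongly in $\SobSph$, then $u_n\to u_\infty$ and $E_{|_{X_G}}$ satisfies the (PS) condition; so I would argue by contradiction, assuming that $v_n$ does not converge strongly to $0$.

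Under this assumption I would apply to $v_n$ the analysis of Section~3. First, since $v_n\not\to 0$, Lemma~\ref{lem: concentration} together with a finite covering of $\Sph$ yields a small $\e_0>0$ with $\Sigma_{\e_0}\neq\emptyset$. Second, the iterative bubble extraction from the proof of Theorem~\ref{thm: classification of PS sequences} produces finitely many concentration points $\zeta^1,\dots,\zeta^m$ with $v_n=\sum_{l=1}^{m}v_n^l+o(1)$ in $\SobSph$, the finiteness of $m$ following from Remark~\ref{rmk: Energy of the bubbles} and Lemma~\ref{lem: critical energy level}; since each $v_n^l$ concentrates at $\zeta^l$, every point off $\{\zeta^1,\dots,\zeta^m\}$ has a small ball carrying vanishing $L^{p^*}$-mass, so $\Sigma_{\e_0}\subseteq\{\zeta^1,\dots,\zeta^m\}$ is finite. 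Third, and crucially, $\Sigma_{\e_0}$ is $G$-invariant: every $g\in G\subseteq\U$ acts on $\Sph$ by an isometry preserving $\dSph$, so using $v_n\circ g=v_n$ one has
\begin{equation*}
\int_{B_r(g\zeta)}|v_n|^{p^*}\dSph=\int_{B_r(\zeta)}|v_n\circ g|^{p^*}\dSph=\int_{B_r(\zeta)}|v_n|^{p^*}\dSph
\end{equation*}
for every $\zeta$ and every $r>0$, and passing to $\liminf_{r\to0}\liminf_{n\to\infty}$ shows that $\zeta\in\Sigma_{\e_0}$ if and only if $g\zeta\in\Sigma_{\e_0}$.

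To finish, I would pick any $\zeta_0\in\Sigma_{\e_0}$: its whole $G$-orbit then lies in the finite set $\Sigma_{\e_0}$; but by hypothesis this orbit has an accumulation point in the compact space $\Sph$, hence is infinite, a contradiction. Therefore $v_n\to 0$ strongly, i.e.\ $u_n\to u_\infty$ in $\SobSph$, and $E_{|_{X_G}}$ satisfies the Palais--Smale condition. I expect the main obstacle to lie in the third paragraph: one must make rigorous that the set of concentration points extracted from $v_n$ is finite (this is the quantitative content of Theorem~\ref{thm: classification of PS sequences}, each such point carrying at least the bubble energy $C_E$) and that $\Sigma_{\e_0}$ is genuinely $G$-invariant, which rests on the fact that $\U$ acts on $\Sph$ by maps preserving both the distance $d$ and the volume form $\dSph$. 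Once these are in place, the orbit hypothesis closes the argument immediately.
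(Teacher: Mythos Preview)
Your proposal is correct and follows essentially the same route as the paper: both argue by contradiction, invoke Theorem~\ref{thm: classification of PS sequences} to obtain a finite set of concentration points, use the $G$-invariance of the (PS) sequence to show that the $G$-orbit of any concentration point consists of concentration points, and then contradict the orbit hypothesis. Your version is in fact more explicit in two places the paper leaves implicit: you spell out why a (PS) sequence for $E_{|_{X_G}}$ is also a (PS) sequence for $E$ on $\SobSph$ (via the $G$-invariance of $\nabla E$), and you work directly with the set $\Sigma_{\e_0}$ and its $G$-invariance, drawing the contradiction from ``infinite orbit contained in a finite set,'' whereas the paper phrases the contradiction through the energy quantization $c\ge E(u_\infty)+jC_E$ with $j\to\infty$.
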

\begin{proof}
Let $u_n$ be a (PS) sequence for $E_{|_{X_G}}$ at level $c$.  By contradiction, we suppose that $u_n$ does not admit a converging subsequence in $X_G.$ Hence, by the classification of (PS) sequence given in Theorem \ref{thm: classification of PS sequences}, we deduce that the set of concentration points
\begin{equation*}
\Theta=\{\zeta^l\in\Sph:\; 1\leq l\leq m\}
\end{equation*}
is discrete, finite and non-empty. Here we have adopted the same notation used in Theorem \ref{thm: classification of PS sequences}. Let $\zeta_0\in\Theta$. Then, since the (PS) sequence $u_n$ is invariant under the action of $G,$ if $g_i$ with $i=1,\dots, j$, are $j$ fixed elements in $G$, we have that also $\zeta^i=g_i \zeta_0$ are concentration points belonging to $\Theta$. Again, by Theorem \ref{thm: classification of PS sequences} and recalling Remark \ref{rmk: Energy of the bubbles} we have
\begin{align}\label{eq: quantization of energy lem E_XG is PS}
c=\lim_{n\to\infty}E(u_n)=E(u_{\infty})+\sum_{i=1}^j E_{\H^N}(U^i_{\infty})\geq E(u_{\infty})+j\frac{k}{Q} C_{S}^{-\frac{Q}{2k}}.
\end{align}
On the other hand, by assumption, the $G$-orbit of $\zeta_0$ has an accumulation point, therefore $\Theta$ contains infinitely many concentration points of the type $\zeta_i=g_i\zeta_0.$ Hence, letting $j\to +\infty$ in \eqref{eq: quantization of energy lem E_XG is PS} we reach a contradiction.
\end{proof}

\noindent
We recall that quite a few examples of infinite dimensional subgroups of $\U$ satisfying hypotheses of Lemma \ref{lem: E_XG is PS} are provided in \cite{MMT2015} and \cite{Kr}. Now we prove our result:
\begin{proof}[Proof of Theorem \ref{thm: infinitelysolutions} ]
Let $G$ be a subgroup of $\U$ such that $X_G$ is an infinite dimensional vector space and suppose that for each $\zeta_0\in\Sph$ the $G$-orbit of $\zeta_0$ contains at least one accumulation point. By Lemma \ref{lem: E_XG is PS}, $E_{|_{X_G}}$ satisfies the Palais-Smale condition, therefore Lemma \ref{lem: Amb-Rab} allows to perform a minimax argument ensuring the existence of a sequence of critical points $u_n$ in $X_G$ for $E_{|_{X_G}}$ such that
\begin{equation}\label{eq: Lp norm to infty}
\int_{\Sph}|u_n|^{p^*}\dSph\to\infty\quad\textnormal{as }n\to\infty.
\end{equation}
Now, since the functional $E$ is invariant under the action of $G$, by the Principle of Symmetric Criticality (see \cite{Pa1979}), we have that any critical point of $E_{|_{X_G}}$ is also a critical point for $E$. Moreover, to each $u_n$, solution to \eqref{eq: problem on Sph}, corresponds a solution $U_n=\Jacc^{\frac{1}{p^*}}u_n\circ \Cay$ to \eqref{eq: problem on H} and \eqref{eq: Lp norm to infty} implies
\begin{align*}
\int_{\HN} U_n\LH U_n \dH &= \int_{\HN} \Jacc^{\frac{1}{p^*}}u_n\circ\Cay \LH\left(\Jacc^{\frac{1}{p^*}}u_n\circ\Cay \right)\dH\\
&=\int_{\Sph}u_n\LSph u_n\dSph\\
&=\int_{\Sph}|u_n|^{p^*}\dSph\to \infty\quad \textnormal{as } n\to\infty.
\end{align*}
But all the solutions to \eqref{eq: problem on H} of the type $\omega_{\lambda,\xi}$ have the same energy $\frac{k}{Q} C_{S}^{-\frac{Q}{2k}}$ (see Remark \ref{rmk: Energy of the bubbles}), consequently
\begin{equation*}
\int_{\HN} \omega_{\lambda,\xi}\LH \omega_{\lambda,\xi}\dH=\left(\frac{1}{2}-\frac{1}{p^*}\right)^{-1}E_{\H}(\omega_{\lambda,\xi})=\left(\frac{1}{2}-\frac{1}{p^*}\right)^{-1}\frac{k}{Q} C_{S}^{-\frac{Q}{2k}}.
\end{equation*}
So, it is clear that in the sequence $U_n$ (and hence in the sequence $u_n$) there are infinitely many solutions of \eqref{eq: problem on H} (or equivalently of (\ref{eq: problem on Sph})), distinct from $\omega_{\lambda,\xi}$.
\end{proof}


\appendix
\section{Appendix }
We recall here some definitions and properties for the Riesz potentials on Carnot groups. So let $\mathbb{G}$ be a Carnot group of homogeneous dimension $Q$ and $\Delta_{b}$ its sub-Laplacian, then we have
\begin{theorem}[\cite{FL}]
Let $0<\alpha<Q$ and consider $h(t,x)$ the fundamental solution of the operator $-\Delta_{b}+\frac{\partial}{\partial t}$, then the integral
$$R_{\alpha}(x)=\frac{1}{\Gamma(\frac{\alpha}{2})}\int_{0}^{\infty}t^{\frac{\alpha}{2}-1}h(t,x)dt$$
converges absolutely and it satisfies the following properties:
\begin{itemize}
\item $R_{\alpha}$ is a kernel of type $\alpha$. In particular it is homogeneous of degree $\alpha-Q$
\item $R_{2}$ is the fundamental solution of $-\Delta_{b}$
\item $R_{\alpha}*R_{\beta}=R_{\alpha+\beta}$ for $\alpha$ and $\beta>0$ and $\alpha+\beta<Q$.
\item For $f\in L^{p}(\mathbb{G})$ and $1<p<\infty$, we have that 
$$(-\Delta_{b})^{-\frac{\alpha}{2}}f=f*R_{\alpha}.$$
\end{itemize}
\end{theorem}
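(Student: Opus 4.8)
The plan is to realize $R_\alpha$ through the subordination (Bochner) formula for the heat semigroup generated by $\Delta_b$, so that every assertion reduces to a property of the heat kernel $h(t,x)$ together with an elementary Gamma/Beta function manipulation. Throughout I would take as known the standard facts about the sub-Laplacian heat kernel on a Carnot group: the parabolic homogeneity $h(t,x)=t^{-Q/2}h(1,\delta_{t^{-1/2}}x)$ (a consequence of $-\Delta_b$ being $\delta_\lambda$-homogeneous of degree $2$), the Gaussian-type upper bounds $h(t,x)\lesssim t^{-Q/2}\exp(-c\,d(x,0)^2/t)$ together with analogous bounds for its $x$-derivatives, the convolution semigroup identity $h(t,\cdot)*h(s,\cdot)=h(t+s,\cdot)$, the conservation of mass $\int_{\mathbb{G}}h(t,x)\,dx=1$, and the initial/parabolic behaviour $h(t,\cdot)\to\delta_0$ as $t\to 0^+$ with $\Delta_b h=\partial_t h$ for $t>0$. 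These are the genuinely non-Euclidean ingredients; once granted, the rest is formally identical to the classical case.

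For absolute convergence and homogeneity I would argue by scaling. Substituting $x\mapsto\delta_\lambda x$ and changing variables $s=\lambda^{-2}t$ gives $R_\alpha(\delta_\lambda x)=\lambda^{\alpha-Q}R_\alpha(x)$, so $R_\alpha$ is homogeneous of degree $\alpha-Q$. For fixed $x\neq 0$ the same scaling yields $h(t,x)\sim C(x)\,t^{-Q/2}$ as $t\to\infty$, whence $t^{\alpha/2-1}h(t,x)\sim C\,t^{(\alpha-Q)/2-1}$ is integrable near infinity precisely when $\alpha<Q$; near $t=0$ the factor $t^{\alpha/2-1}$ is locally integrable since $\alpha>0$, and the Gaussian decay of $h(t,x)$ makes the integrand rapidly vanishing. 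Smoothness of $R_\alpha$ on $\mathbb{G}\setminus\{0\}$ follows by differentiating under the integral sign, the derivative heat-kernel bounds supplying the required domination. Hence $R_\alpha$ is a kernel of type $\alpha$ in the Folland-Stein sense.

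The remaining identities are computations with the subordination integral. For $R_2$ I would use $\Gamma(1)=1$ and integrate the heat equation in time: $-\Delta_b R_2=\int_0^\infty(-\Delta_b h)\,dt=-\int_0^\infty\partial_t h\,dt=h(0^+,\cdot)-h(\infty,\cdot)=\delta_0$, where the boundary term at infinity vanishes by $t^{-Q/2}\to 0$. For the semigroup law I would insert $h(t,\cdot)*h(s,\cdot)=h(t+s,\cdot)$ into $R_\alpha*R_\beta$ and pass to the variables $t=u\theta$, $s=u(1-\theta)$ with $u=t+s$, which factorizes the double integral as $\big(\int_0^\infty u^{(\alpha+\beta)/2-1}h(u,x)\,du\big)\,B(\alpha/2,\beta/2)$; the identity $B(\alpha/2,\beta/2)=\Gamma(\alpha/2)\Gamma(\beta/2)/\Gamma((\alpha+\beta)/2)$ cancels the normalizing constants and produces exactly $R_{\alpha+\beta}$, the hypothesis $\alpha+\beta<Q$ ensuring that the target kernel converges and that Tonelli applies. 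Finally the inversion formula is the functional-calculus identity $(-\Delta_b)^{-\alpha/2}=\frac{1}{\Gamma(\alpha/2)}\int_0^\infty t^{\alpha/2-1}e^{t\Delta_b}\,dt$ for the positive self-adjoint operator $-\Delta_b$; writing $e^{t\Delta_b}f=f*h(t,\cdot)$ and exchanging the $t$-integral with the convolution (justified by Minkowski's integral inequality and the $L^p$ mapping of $f\mapsto f*R_\alpha$, i.e.\ the Hardy-Littlewood-Sobolev estimate of Folland-Stein for $1<p<\infty$, $\alpha<Q$) yields $(-\Delta_b)^{-\alpha/2}f=f*R_\alpha$.

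The main obstacle is not the subordination algebra, which is routine, but securing the heat-kernel input in the sub-elliptic setting and rigorously justifying the interchanges it powers: the Gaussian bounds and their derivative analogues, the exact parabolic scaling, and the convolution semigroup property are deep facts for hypoelliptic sub-Laplacians on Carnot groups, and every Fubini/Tonelli exchange, every differentiation under the integral sign, and the passage to $f*R_\alpha$ rest on the domination these estimates provide near $t=0$ and $t=\infty$. Granting these ingredients—standard in the literature and available in the cited references—the proof is a faithful transcription of the classical Riesz-potential construction.
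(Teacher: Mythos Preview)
The paper does not prove this theorem: it is stated in the Appendix as a quoted result from the literature (with attribution \cite{FL}), and no argument is given. Your proposal is therefore not competing with any proof in the paper. That said, the approach you outline---Bochner subordination of the heat semigroup, parabolic scaling of $h(t,x)$ to obtain homogeneity and convergence, integrating $\partial_t h=\Delta_b h$ for the $\alpha=2$ case, and the Beta-function change of variables for the convolution identity---is the standard route to these facts (essentially Folland's construction of Riesz kernels on stratified groups), and the sketch is correct given the heat-kernel inputs you list.
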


\noindent
In this paper, we used the convention
$$R_{\alpha}f:=(-\Delta_{b})^{-\frac{\alpha}{2}}f=f*R_{\alpha}.$$
From the integral form of $R_{\alpha}$ one has
$$R_{\alpha}(x)\approx |x|^{-Q+\alpha}$$
and $\rho(x):=(R_{\alpha}(x))^{\frac{1}{\alpha-Q}}$ defines a $\mathbb{G}$-homogeneous norm, smooth away from the origin and it induces a quasi-distance that is equivalent to the left-invariant Carnot-Caratheodory distance. In a similar way, one can define the function $\tilde{R}_{\alpha}$, introduced in \cite{Fran}, for $\alpha<0$ and $\alpha \not \in \{0,-2,-4,\dots\}$ by
$$\tilde{R}_{\alpha}(x)=\frac{\frac{\alpha}{2}}{\Gamma(\frac{\alpha}{2})}\int_{0}^{\infty}t^{\frac{\alpha}{2}-1}h(t,x)dt$$
Again, it is easy to see that $\tilde{R}_{\alpha}$ is $\mathbb{G}$-homogeneous of degree $\alpha-Q$ and
$$\tilde{R}_{\alpha}(x)\approx |x|^{\alpha-Q}.$$
Using this function, it is possible to define another representation for the fractional sub-Laplacian, which we use in the proofs of our results.
\begin{theorem}[\cite{Fran}]
If $u$ is a Schwartz function on $\mathbb{G}$, then for $0<\alpha<2$ one has
$$(-\Delta_{b})^{\frac{\alpha}{2}}u(x)=PV\int_{\mathbb{G}}(u(y)-u(x))\tilde{R}_{-\alpha}(y^{-1}x)dy$$
\end{theorem}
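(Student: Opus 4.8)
The plan is to obtain the pointwise formula from the \emph{heat-semigroup subordination identity} for fractional powers, exploiting that, by its very definition, $\tilde{R}_{-\alpha}$ is the Mellin transform in $t$ of the heat kernel $h(t,\cdot)$. The starting point is the elementary scalar identity, valid for $0<s<1$ and $\lambda>0$,
\begin{equation*}
\lambda^{s}=\frac{1}{\Gamma(-s)}\int_{0}^{\infty}\big(e^{-t\lambda}-1\big)\,t^{-s-1}\,dt,
\end{equation*}
which follows from one integration by parts together with the relation $\Gamma(1-s)=-s\,\Gamma(-s)$. Applying it with $s=\alpha/2$ to the nonnegative self-adjoint operator $-\Delta_{b}$ through the spectral calculus — legitimate on a Schwartz function $u$, for which every occurring term is smooth and rapidly decaying — yields
\begin{equation*}
(-\Delta_{b})^{\frac{\alpha}{2}}u(x)=\frac{1}{\Gamma(-\frac{\alpha}{2})}\int_{0}^{\infty}\big(e^{t\Delta_{b}}u(x)-u(x)\big)\,t^{-\frac{\alpha}{2}-1}\,dt,
\end{equation*}
where $e^{t\Delta_{b}}$ is the heat semigroup acting by $e^{t\Delta_{b}}u(x)=\int_{\mathbb{G}}u(y)\,h(t,y^{-1}x)\,dy$.

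Next I would rewrite the difference using conservativeness of the heat kernel, $\int_{\mathbb{G}}h(t,y^{-1}x)\,dy=1$, which gives $e^{t\Delta_{b}}u(x)-u(x)=\int_{\mathbb{G}}\big(u(y)-u(x)\big)h(t,y^{-1}x)\,dy$. Interchanging the $t$- and $y$-integrations and recalling the definition of $\tilde{R}_{-\alpha}$, namely $\int_{0}^{\infty}t^{-\frac{\alpha}{2}-1}h(t,w)\,dt=\tfrac{\Gamma(-\alpha/2)}{-\alpha/2}\,\tilde{R}_{-\alpha}(w)$, collapses the double integral into $\int_{\mathbb{G}}\big(u(y)-u(x)\big)\tilde{R}_{-\alpha}(y^{-1}x)\,dy$, up to the explicit multiplicative constant $-2/\alpha$ produced by the normalizing factor $\tfrac{\beta/2}{\Gamma(\beta/2)}$ built into $\tilde{R}_{\beta}$ (here $\beta=-\alpha$). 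This constant is precisely the one absorbed in the convention of \cite{Fran}; I would carry it explicitly as a bookkeeping term and check that, together with the sign of $\Gamma(-\alpha/2)<0$, it reproduces the stated formula with the correct overall sign.

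The genuine difficulty is that neither the interchange of integrals nor the principal value is automatic: since $\tilde{R}_{-\alpha}(w)\approx|w|^{-Q-\alpha}$, the kernel is non-integrable across the diagonal as soon as $\alpha\ge1$, and the mere first-order vanishing of $u(y)-u(x)$ does not suffice. The remedy, which I expect to be the crux, is a symmetrization. The heat kernel of the sub-Laplacian is symmetric under group inversion, $h(t,w)=h(t,w^{-1})$, because $-\Delta_{b}$ is self-adjoint; hence $\tilde{R}_{-\alpha}(w)=\tilde{R}_{-\alpha}(w^{-1})$. Changing variables by $w=y^{-1}x$ and averaging the integrand over $w\mapsto w^{-1}$ turns the expression into the second difference $\tfrac12\int_{\mathbb{G}}\big(u(xw)+u(xw^{-1})-2u(x)\big)\tilde{R}_{-\alpha}(w)\,dw$. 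A second-order Taylor expansion along the group law (whose first-order part cancels by the $w\leftrightarrow w^{-1}$ symmetry, recalling that inversion is $w\mapsto-w$ in exponential coordinates) gives $|u(xw)+u(xw^{-1})-2u(x)|\lesssim|w|^{2}$ near the identity in the homogeneous sense, so the integrand is bounded by $|w|^{2-Q-\alpha}$, integrable precisely because $\alpha<2$; at infinity the rapid decay of the Schwartz function $u$ against $\tilde{R}_{-\alpha}(w)\approx|w|^{-Q-\alpha}$ guarantees convergence. This absolute integrability both legitimizes the Fubini interchange used above and identifies the principal value with the convergent symmetrized integral, completing the argument.
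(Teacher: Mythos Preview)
The paper does not prove this statement at all: it is quoted in the Appendix as a result of Ferrari--Franchi \cite{Fran} and no argument is given. So there is no ``paper's own proof'' to compare against; your proposal stands on its own merits.

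Your approach via the Balakrishnan--type subordination identity $\lambda^{s}=\Gamma(-s)^{-1}\int_{0}^{\infty}(e^{-t\lambda}-1)\,t^{-s-1}\,dt$, followed by the stochastic completeness $\int_{\mathbb{G}}h(t,\cdot)=1$ and the Mellin collapse onto $\tilde{R}_{-\alpha}$, is the standard route and is in fact the one taken in \cite{Fran}. Two remarks are worth making. First, your bookkeeping on constants is honest: the normalization $\tfrac{\beta/2}{\Gamma(\beta/2)}$ in the definition of $\tilde{R}_{\beta}$ does produce an extra factor $-2/\alpha$, so the formula as stated in the paper carries an implicit convention (or constant) that you are right to track. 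Second, the symmetrization step deserves one more word of care: the second-order Euclidean Taylor bound yields $|u(xw)+u(xw^{-1})-2u(x)|\lesssim|w|_{E}^{2}$, and to conclude integrability against $\tilde{R}_{-\alpha}(w)\approx|w|^{-Q-\alpha}$ you need the elementary but essential comparison $|w|_{E}\lesssim|w|$ for $|w|\le1$ (true on any Carnot group because higher-stratum coordinates carry homogeneous weight $\ge2$). With that inequality inserted, your integrability argument for $0<\alpha<2$ is correct, and the Fubini interchange and the identification of the principal value with the symmetrized integral are justified.
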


\noindent
Using classical interpolation (or what is it called $\lambda$-kernel estimates in \cite{FS}) one has for $0<\alpha<Q$,
\begin{equation}
\|R_{\alpha}u\|_{p}\lesssim \|u\|_{q},
\end{equation}
for $\frac{1}{p}=\frac{1}{q}-\frac{\alpha}{Q}$ and $1<q<\infty$. In the case of the Heisenberg group one in fact has more explicit computations for the operator $\LH$ (see \cite{Ls}). In fact, one can replace $R_{\alpha}$ by the expected Green's function of $\LH$ that is $$G_{2k}(x)=c_{n,k}\frac{1}{|x|^{Q-2k}}$$
and $\tilde{R}_{-\alpha}$ by the kernel $$K_{2k}=\tilde{c}_{n,k}\frac{1}{|x|^{Q+2k}}.$$
The integral representation formula holds also for the operator $\LH$ in our results, with $\tilde{R}_{-2k}$ replaced by $K_{2k}$.


\end{document}